\newlength{\defbaselineskip}
\theoremstyle{plain}
\newtheorem{theorem}{Theorem}[section]
\newtheorem{lemma}[theorem]{Lemma}
\newtheorem{corollary}[theorem]{Corollary}
\newtheorem{conj}{Conjecture}
\theoremstyle{definition}
\newtheorem{definition}[theorem]{Definition}
\DeclareMathOperator{\mut}{Mut}
\DeclareMathOperator{\spec}{Spec}
\renewcommand{\emptyset}{\varnothing}
\newcommand{\mgsi}{\ensuremath{\fontseries\bfdefault\textit{\underline{i}}}}
\newcommand{\hdef}[1]{\textbf{#1}}
\newcommand{\A}{\mathcal{A}}
\newcommand{\U}{\mathcal{U}}
\newcommand{\ZZ}{\mathcal{Z}}
\newcommand{\Z}{\mathbb{Z}}
\newcommand{\T}{\mathbb{T}}
\renewcommand{\O}{\mathrm{O}}
\newcommand{\x}{\normalfont\textbf{x}}
\newcommand{\F}{\mathcal{F}}
\renewcommand{\tilde}{\widetilde}
\newcommand{\ou}{%
  \mathrel{%
    \vcenter{\offinterlineskip
      \ialign{##\cr$\cup$\cr\noalign{\kern+1.5pt}$\cap$\cr}%
    }%
  }%
}
\definecolor{darksagegreen}{RGB}{0, 105, 60} 
\tikzset{inner sep = 1pt}
\tikzset{outer sep = 1pt}
\tikzset{vertex/.style = {shape=circle,draw,minimum size=1.2em, ultra thick}}
\tikzset{fvertex/.style = {shape=rectangle,draw,minimum size=2em, ultra thick}}
\tikzset{qarrow/.style = {->, shorten <=5, shorten >=5, >=stealth', thick}}
\begin{document}

\title[green-to-red sequences, local-acyclicity, and upper cluster algebras]{On the relationship between green-to-red sequences, local-acyclicity, and upper cluster algebras}
\author{Matthew R. Mills}
\thanks{The author was supported by the University of Nebraska--Lincoln.}
\keywords{upper cluster algebra, maximal green sequences, locally-acyclic}
\address{Department of Mathematics, 
University of Nebraska-Lincoln,
Lincoln NE 68588-0130,
USA}
\email{matthew.mills@huskers.unl.edu}

\subjclass[2010]{Primary 13F60 %cluster algebras
} 

\begin{abstract}
A cluster is a finite set of generators of a cluster algebra. The Laurent Phenomenon of Fomin and Zelevinsky says that any element of a cluster algebra can be written as a Laurent polynomial in terms of any cluster. The upper cluster algebra of a cluster algebra is the ring of rational functions that can be written as a Laurent polynomial in every cluster of the cluster algebra. By the Laurent phenomenon a cluster algebra is always contained in its upper cluster algebra, but they are not always equal.

In 2014 it was conjectured that the equality of the cluster algebra and upper cluster algebra is equivalent to a combinatorial property regarding the existence of a maximal green sequence. In this work we prove a stronger result for cluster algebras from mutation-finite quivers, and provide a counterexample to show that the conjecture does not hold in general. Finally, we propose a new conjecture about the upper cluster algebra on the relationship which replaces maximal green sequences with more general green-to-red sequences and incorporates Mueller's local-acyclicity. 
\end{abstract}
\maketitle
%\tableofcontents
\section{Introduction}

Cluster algebras were introduced by Fomin and Zelevinsky in the early 2000s \cite{cluster1,cluster2,cluster3,cluster4} as a way to study total positivity and (dual) canonical bases in Lie theory. Cluster algebras have since become fundamental objects of study in many areas of mathematics such as commutative algebra, algebraic geometry, algebraic combinatorics, representation theory, and mathematical physics. They play a role in the study of Teichm\"{u}ller theory, canonical bases, total positivity, Poisson-Lie groups, Calabi-Yau algebras, noncommutative Donaldson-Thomas invariants, scattering amplitudes, and representations of finite dimensional algebras.

A cluster algebra is a subalgebra of a rational function field with a distinguished set of generators, called \emph{cluster variables}, that are obtained by an iterative procedure, called \emph{mutation}, and grouped into overlapping finite subsets of a fixed cardinality, called \emph{clusters}. The dynamics of the mutation process are determined by a directed graph with no loops or two-cycles which is called a \emph{(cluster) quiver}. The number of vertices in the quiver is the same as the size of the cluster and the cluster variables are associated to the vertices of the quiver. The procedure of mutation requires a designated vertex $v$ of the quiver and then acts on both the cluster and the quiver. In the quiver, mutation changes the edges of the digraph to produce a new one. In the cluster, the cluster variable associated with $v$ is replaced by a new one which is a rational function in the cluster variables associated to vertices adjacent to $v$. 

The Laurent Phenomenon (Theorem~\ref{thm:Laurent}) says that all cluster variables can be written as Laurent polynomials in terms of any cluster. However, there could still exist Laurent polynomials with this property that are not elements of the cluster algebra. The \textit{upper cluster algebra} $\mathcal{U}$ associated to a cluster algebra $\mathcal{A}$ is the intersection of the Laurent rings of every cluster of $\mathcal{A}.$ This means that $\mathcal{U}$ is the subalgebra of all rational functions that can be written as Laurent polynomials in terms of any cluster of $\mathcal{A}$. In general $\mathcal{U}$ is a much nicer ring then $\mathcal{A}$ \cite{Gross2015,gross}. For example, $\U$ is often finitely generated where $\A$ is not, and $\U$ has better behaved singularities than $\A$ \cite{Benito2015}. By Theorem~\ref{thm:Laurent} we have a natural inclusion $\mathcal{A} \subset \mathcal{U},$ but in general it remains an open question as to when this containment is strict.

When a quiver has no directed cycles it is called \emph{acyclic}. A cluster algebra is said to be acyclic if it is generated from an acyclic quiver and it is known that an acyclic cluster algebra always coincides with its upper cluster algebra \cite{cluster3,MullerAU}. In \cite{muller} Muller generalizes the notion of an acyclic cluster algebra and defines \emph{locally-acyclic} cluster algebras.

If we consider a cluster algebra $\A$ with cluster variables $x_1$ and $x_2$ which have no common zeros in $\spec \A$, then $\spec \A[x_1^{-1}]$ and $\spec\A[x_2^{-1}]$ form an open cover of $\spec \A.$ A cluster algebra then said to be locally-acyclic if there exist collections $Z_i$ of cluster variables such that when we consider the localizations $\A_i = \A[Z_i^{-1}]$ we have that each $\A_i$ is an acyclic cluster algebra and the collection $\{\spec \A_i\}$ form an open cover of $\spec \A.$
The class of locally-acyclic cluster algebras naturally contain the class of  acyclic cluster algebras and they share many of the same nice properties; including that the cluster algebra and upper cluster algebra coincide. A precise definition of a locally-acyclic cluster algebra can be found in Section~\ref{sec:laca}.

A \emph{maximal green sequence} is a particular finite sequence of mutations such that each mutation occurs at a vertex satisfying a certain combinatorial condition that was introduced by Keller \cite{keller2} to produce quantum dilogarithm identities and explicitly compute refined Donaldson-Thomas invariants defined by Kontsevich and Soibelman \cite{Kontsevich2008}. They are also used by physicists to count BPS states \cite{Alim2013a}. For a precise definition we refer the reader to  \cite{brustle}.

During a discussion led by Arkady Berenstein and Christof Geiss at the \textit{Hall and cluster algebras} conference at Centre de Recherches Math\'ematiques, Montr\'eal in 2014 it was observed that in every known example a cluster algebra is equal to its upper cluster algebra if and only if the cluster algebra contains a quiver that admits a maximal green sequence. It was also then conjectured that this phenomenon was true in general. However, as Muller pointed out in \cite{muller2} there is some subtlety in making this conjecture precise. Many different quivers can produce the same cluster algebra, but the question about the existence of a maximal green sequence is specific to a single quiver. Therefore we state the conjecture as follows. 

\begin{conj}\label{conj:false}
Let $Q$ be a quiver. The following are equivalent:
\begin{enumerate}
\item The quiver $Q$ can be mutated to a quiver which admits a maximal green sequence.
\item The cluster algebra $\A(Q)$ is equal to its upper cluster algebra.
\end{enumerate}
\end{conj}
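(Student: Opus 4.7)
The plan is to attack the two implications separately. For (1) $\Rightarrow$ (2), note that mutation preserves both the cluster algebra $\A(Q)$ and its upper cluster algebra $\U(Q)$, so I may assume $Q$ itself admits a maximal green sequence. A maximal green sequence is a special case of a reddening sequence, and I would try to invoke the scattering diagram and theta function machinery developed in the references already cited to argue that the existence of a reddening sequence forces the consistent scattering diagram to have a particularly nice form, from which one can deduce $\A(Q)=\U(Q)$. Concretely, the goal would be to show that every element of $\U$ is a polynomial combination of theta functions corresponding to lattice points in cluster chambers, which under an MGS hypothesis are cluster monomials and hence lie in $\A$.

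For (2) $\Rightarrow$ (1), I would first handle the mutation-finite case by exploiting the Felikson--Shapiro--Tumarkin classification: quivers from triangulations of marked surfaces, the affine and elliptic exceptional types, and the sporadic quivers $X_6$ and $X_7$. For each mutation class one can check directly whether $\A=\U$ and, independently, whether some representative admits an MGS, hoping the two conditions match up. For surface type this reduces to the existing characterization of when a triangulated surface admits an MGS, while for the finite exceptional list it can in principle be decided by computer. This case-by-case treatment is presumably what the abstract refers to as the `stronger result' in the mutation-finite setting.

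The main obstacle, and the point where I expect the conjecture to fail in full generality, is the reverse direction for mutation-infinite quivers. Certifying that no quiver in an infinite mutation class admits an MGS is genuinely hard, since the class is unbounded and there is no known decision procedure for non-existence. My strategy for finding a counterexample would be to search among locally-acyclic cluster algebras of non-acyclic type, which automatically satisfy $\A=\U$ by Muller's theorem, but whose initial quivers have structural obstructions to MGSs, for instance a cyclic configuration that persists under every mutation. If such an example is produced, the natural reformulation is to replace MGS with the more flexible notion of a green-to-red sequence and pair it with local-acyclicity, reflecting the fact that $\A=\U$ and the relevant reddening data behave well under the open covers built into local-acyclicity, while MGSs themselves are too rigid to localize.
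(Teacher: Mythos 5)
You cannot prove this statement, because it is false, and the paper's treatment of it is a refutation rather than a proof: the quiver $Q_{ce}$ of Figure~\ref{fig:counterexample} satisfies (2) but not (1). Your instinct that the conjecture should fail, and your search strategy (look for a non-acyclic but locally-acyclic cluster algebra, so that $\A=\U$ comes for free from Muller's theorem, while some structural feature obstructs maximal green sequences throughout the mutation class) is exactly the route the paper takes. But the proposal stops at the level of strategy on the two points that carry all the content. First, you give no mechanism for certifying that \emph{no} quiver in an infinite mutation class admits a maximal green sequence; you even note there is no general decision procedure. The paper's solution is a combination of three concrete certificates: the Lampe--Schr\"oer invariant (Lemma~\ref{lem:lampe}) shows every column gcd of $B_{ce}$ stays equal to $2$, so no quiver in $\mut(Q_{ce})$ ever contains a single arrow; then Theorem~\ref{thm:bad_head} together with the induced-subquiver restriction (Theorem~\ref{thm:subquiver}, Corollary~\ref{cor:bad_cycle}) kills maximal green sequences for every non-acyclic member of the class; and Seven's admissible quasi-Cartan criterion, packaged as the admissible-coloring argument (Theorem~\ref{thm:admissible}, Corollary~\ref{cor:no_acyclic}), shows the class contains no acyclic quiver at all. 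Without something of this kind, exhibiting a candidate quiver proves nothing.

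Second, your sketch of (1)$\Rightarrow$(2) via scattering diagrams and theta functions is not a proof and should not be presented as one: the results of Gross--Hacking--Keel--Kontsevich cited in the paper do not show that a maximal green (or reddening) sequence alone forces $\A=\U$; that implication is precisely the open part of the replacement Conjecture~\ref{conj:big}, which the paper only verifies for special classes (mutation-finite quivers via Theorem~\ref{thm:big2}, minimal mutation-infinite quivers, double Bruhat cells, Banff quivers). Your case-by-case plan for the mutation-finite direction does match the paper's Theorem~\ref{thm:big1}/Theorem~\ref{thm:big2}, where the needed facts ($\A=\U$ and local-acyclicity with principal coefficients over a suitable open set, versus existence of an MGS from Theorem~\ref{thm:1}) are settled class by class, but note that even there the equivalence is not obtained by proving either implication abstractly; each property is established or refuted independently for each mutation class. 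Finally, your closing reformulation (green-to-red sequences plus local-acyclicity) coincides with the paper's Conjecture~\ref{conj:big}, and the counterexample quiver indeed admits the green-to-red sequence $(1,4,3,4,2,4)$ (Corollary~\ref{cor:ce_2}), so the overall picture you anticipate is correct even though the argument itself is not supplied.
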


The purpose of this note is twofold. First, we will show that a stronger result holds for the class of mutation-finite quivers. Secondly, we will show that Conjecture~\ref{conj:false} does not hold in general by providing a counterexample. We will also give evidence for a new conjecture about the relationship between maximal green sequences, local-acyclicity, and upper cluster algebras. 

The \emph{mutation class} of a quiver $Q$, denoted $\mut(Q)$, is the collection of all quivers produced by mutating $Q$ in every possible way. If $\mut(Q)$ is finite then $Q$ is said to be \emph{mutation-finite}. A classification of mutation-finite quivers was given by Felikson, Shapiro, and Tumarkin \cite{FST}. Although these mutation classes contain finitely many quivers the mutation of seeds usually produce infinitely many clusters.

It has been an important open problem in string theory to determine which finite mutation classes have quivers that admit maximal green sequences.
Alim \textit{et al}. showed for any quiver from a marked surface with boundary had a maximal green sequence \cite{Alim2013a}. Bucher showed that a specific quiver from a closed surface with 2 punctures and genus at least 2 has a maximal green sequence \cite{bucher}. Ladkani showed that any quiver from a closed marked surface with exactly one marked point does not have a maximal green sequence \cite{ladkani}. Seven proved that the quivers in the mutation class of $\mathbb{X}_7$ does not have a maximal green sequence \cite{seven}.
Together with Eric Bucher, the author answered the question for the only infinite family of mutation classes that were still outstanding \cite{Bucher2017} by providing a single quiver in the mutation class with a maximal green sequence.

However, shortly after this result Muller showed in \cite{muller2} that the property of having a maximal green sequence is not an invariant of the mutation class. That is, it is possible for there to exist two quivers in the same mutation class where one has a maximal green sequence and the other does not. In \cite{mills} the author completely answers the question of existence of a maximal green sequence for every individual mutation-finite quiver (Theorem~\ref{thm:1}).

We cite several results where facts about the upper cluster algebra and the local-acyclicity are already known. Berenstein, Fomin and Zelevinsky show that the cluster algebra and upper cluster algebra coincide for acyclic cluster algebras when they contain a coprime seed \cite{cluster3}. Muller removed the assumption of containing a coprime seed when he showed the same result for locally-acyclic cluster algebras. It is known but, to the authors knowledge, never written that the cluster algebra from $\mathbb X_7$ is not equal to its upper cluster algebra. We give a proof of this result in Theorem~\ref{thm:LA_exceptional}. 
 Muller showed that all of the non-acyclic mutation-finite quivers of exceptional type except for $\mathbb X_7$ are locally-acyclic \cite{muller}. 
 Musiker, Schiffler, and Williams show that for any unpunctured marked surface with boundary and at least two marked points is equal to its upper cluster algebra \cite{MSW2}. 
 Canackci, Lee, and Schiffler reduced the number of necessary marked points to one \cite{cls}. 
Ladkani showed that the cluster algebra for a once-punctured closed surface is not equal to its upper cluster algebra \cite{ladkani}. We prove the outstanding cases to obtain the following results. 
{%
\begin{theorem}\label{thm:big1}For every mutation-finite quiver $Q$ there exists an open set $\O$ such that the cluster algebra with principal coefficients $\A_\O^\bullet(Q)$ is a locally-acyclic cluster algebra except for $\mathbb X_7$ and those from once-punctured closed surfaces. In the case of $\mathbb X_7$ and once-punctured closed surfaces the cluster algebras are not equal to the upper cluster algebra.
\end{theorem}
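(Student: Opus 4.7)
The plan is to proceed case-by-case according to the Felikson-Shapiro-Tumarkin classification of mutation-finite quivers. That classification splits the mutation-finite quivers into quivers of classical finite and affine Dynkin type, eleven exceptional mutation classes ($E_n, E_n^{(1)}, E_n^{(1,1)}$ for $n=6,7,8$ together with $\mathbb{X}_6$ and $\mathbb{X}_7$), and quivers arising from triangulations of marked surfaces. For the finite and affine families, every mutation class contains an acyclic representative, so the associated cluster algebra is acyclic and hence locally-acyclic, with no genuine restriction on coefficients. For the non-acyclic exceptional classes other than $\mathbb{X}_7$, I would invoke Muller's result in \cite{muller} establishing local acyclicity, and lift it to the principal-coefficient version $\A_\O^\bullet(Q)$ by noting that local acyclicity is preserved under freezing and localization; the open set $\O$ encodes the locus in the coefficient torus where the cluster variables used in the cover remain nonzero.

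For the surface cases the argument splits according to whether the boundary is empty. When $\partial S \neq \emptyset$, the results of Musiker-Schiffler-Williams and Canakci-Lee-Schiffler provide $\A(Q) = \U(Q)$; I would upgrade these to local acyclicity by fixing a triangulation $T$ and, for each arc $\gamma \in T$ adjacent to the boundary, localizing at the corresponding cluster variable $x_\gamma$. Each such localization realizes the cluster algebra of a triangulated surface of strictly smaller complexity (fewer interior arcs), and iterating yields an open cover of $\spec \A_\O^\bullet(Q)$ by spectra of acyclic cluster algebras, witnessing local acyclicity.

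The heart of the argument, and the main obstacle, is the case of closed marked surfaces with at least two punctures, where there are no boundary arcs to initiate the reduction. My approach is to pick a tagged arc $\delta$ connecting two distinct punctures and localize at $x_\delta$ together with the companion tagged arc obtained by switching tags at one endpoint; geometrically this corresponds to cutting the surface along $\delta$, producing a surface with new boundary components that falls under the previous paragraph's argument. The open set $\O$ must be chosen carefully so that this cut is compatible with the principal-coefficient structure and Muller's criterion for local acyclicity applies at every stage of the recursion; verifying that the freezing and localization play well with tagged arcs is the delicate technical step. Finally, the negative assertions are covered by results already in the literature: $\mathbb{X}_7$ by Theorem~\ref{thm:LA_exceptional} cited above, and once-punctured closed surfaces by Ladkani's explicit construction in \cite{ladkani} of a rational function in $\U \setminus \A$.
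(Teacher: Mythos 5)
Your overall strategy follows the paper's: a case analysis over the Felikson--Shapiro--Tumarkin classification, citing Muller for the non-acyclic exceptional classes, cutting a closed surface along an arc joining two punctures, and handling the negative cases ($\mathbb{X}_7$ and once-punctured closed surfaces) by Theorem~\ref{thm:LA_exceptional} and Ladkani. However, there are two genuine gaps. First, your treatment of surfaces with boundary is not an argument. You propose to ``upgrade'' $\A=\U$ (Musiker--Schiffler--Williams, Canakci--Lee--Schiffler) to local acyclicity by localizing at boundary-adjacent arcs, but no implication $\A=\U \Rightarrow$ locally-acyclic is known --- the only known implication runs the other way (Theorem~\ref{thm:LA_AU}) --- and a collection of freezings is only a cover if you exhibit \emph{coprime} cluster variables (Lemma~\ref{lem:coprimecover}, Lemma~\ref{lem:short_LA}); simply listing localizations at boundary arcs does not show they have no common zero. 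The paper instead cites local acyclicity directly from Muller and from Canakci--Lee--Schiffler for surfaces with boundary and at least two marked points (Lemma~\ref{thm:boundaryLA}), and then must give a genuinely new argument for the case your sketch silently skips: a surface with exactly one boundary component and exactly one marked point (and no punctures). There the paper produces a coprime pair $x_\alpha, x_\beta$ via skein (smoothing) relations involving non-contractible loops, using that such loops lie in the cluster algebra (Lemma~\ref{lem:torus_loops}), so that a coefficient monomial $Y_3Y_6$ lands in the ideal $(x_\alpha,x_\beta)$ and is a unit once $\O\subset\bigcap_{\tau\in T} D_{y_\tau}$ (Lemma~\ref{lem:coprime_torus}, Theorem~\ref{thm:LA_1punc_1bound}). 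Nothing in your proposal supplies this case.

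Second, in the closed-surface case with at least two punctures, the engine of the reduction is precisely the step you defer as ``the delicate technical step'': proving that $x_\rho$ and its once-notched companion $x_{\rho^{(p)}}$ are coprime. The paper gets this from the Musiker--Schiffler--Williams identity (Theorem~\ref{thm:msw_formula}), which exhibits the coefficient polynomial $Y(\rho,T)=f(\rho,T)\,g(T,p)\,g(T,q)$ inside the ideal generated by the two variables, and then chooses $\O\subset D_{f(\rho,T)}\cap D_{g(T,p)}\cap D_{g(T,q)}$ so that this element is a unit (Corollary~\ref{cor:coprime_closed_surface}); this is exactly where the open set $\O$ in the statement comes from, and it is also why the pair must be the arc and its \emph{one}-endpoint-notched companion (so both lie in a common tagged triangulation and the two freezings give isomorphic quivers of a cut surface with boundary). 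Note also that the logic requires two separate freezings forming a cover via Lemma~\ref{lem:short_LA}, not a single localization at both variables simultaneously, which your phrasing conflates. A minor further omission: mutation-finite quivers of rank $2$ that are not of finite or affine type (many arrows) are not covered by your ``finite and affine families'' clause, though they are acyclic and hence harmless.
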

}

\begin{theorem}\label{thm:big2}
Let $Q$ be a mutation finite quiver with $n$ vertices. The following are equivalent:
\begin{enumerate}
\item The quiver $Q$ has a maximal green sequence.
\item There exists an open set $\O \subseteq \spec(\Z[y_1^{\pm 1},\dots,y_n^{\pm 1}])$ such that the cluster algebra generated over the the ring of regular functions on $\O$ with principal coefficients is equal to its upper cluster algebra.
\item There exists an open set $\O \subseteq \spec(\Z[y_1^{\pm 1},\dots,y_n^{\pm 1}])$ such that the cluster algebra generated over the the ring of regular functions on $\O$ with principal coefficients is locally-acyclic.
\end{enumerate}
Additionally, the stated properties are satisfied for all mutation-finite quivers except in the case of mutation classes associated to once-punctured surfaces and one other mutation class, called $\mathbb{X}_7$, which consists of two quivers.
\end{theorem}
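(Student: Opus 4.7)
The plan is to establish the equivalences as the cycle $(1) \Rightarrow (3) \Rightarrow (2) \Rightarrow (1)$, leveraging Theorem~\ref{thm:big1} together with Muller's theorem \cite{muller} that locally-acyclic cluster algebras equal their upper cluster algebras, and the classification of mutation-finite quivers admitting a maximal green sequence given in Theorem~\ref{thm:1} (from \cite{mills}). The driving observation is that, in the mutation-finite world, the quivers admitting a maximal green sequence and the quivers for which $\A_\O^\bullet(Q)$ can be made locally-acyclic are \emph{the same list}: every mutation-finite quiver except the two in $\mathbb{X}_7$ and those in the mutation classes of once-punctured closed surfaces.

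For $(1) \Rightarrow (3)$ I would invoke Theorem~\ref{thm:1} to conclude that $Q$ is not in $\mathbb{X}_7$ and does not arise from a once-punctured closed surface, at which point Theorem~\ref{thm:big1} supplies the required open set $\O$ for which $\A_\O^\bullet(Q)$ is locally-acyclic. For $(3) \Rightarrow (2)$ one applies Muller's theorem directly: local acyclicity is preserved under any choice of geometric coefficient ring, and Muller's argument yields $\A_\O^\bullet(Q) = \U_\O^\bullet(Q)$ on the same $\O$.

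For $(2) \Rightarrow (1)$ I would argue the contrapositive. If $Q$ admits no maximal green sequence, then by Theorem~\ref{thm:1} it lies in one of the two exceptional families, and Theorem~\ref{thm:big1}, together with Theorem~\ref{thm:LA_exceptional} for $\mathbb{X}_7$ and Ladkani's result \cite{ladkani} for once-punctured closed surfaces, asserts that the cluster algebra is strictly contained in its upper cluster algebra. The main subtlety is confirming that this strict containment persists after extending the coefficient ring to the regular functions on an arbitrary open $\O$; I would handle this by tracking the explicit element of $\U \setminus \A$ produced in each exceptional case and showing that it remains outside $\A_\O^\bullet(Q)$ but inside $\U_\O^\bullet(Q)$ for every $\O$, so no such $\O$ can realize (2).

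The ``Additionally'' clause is then immediate: the three conditions all hold precisely for the non-exceptional mutation-finite quivers, and all fail for $\mathbb{X}_7$ and the once-punctured closed surfaces. I expect the main obstacle to lie in the coefficient-ring bookkeeping of the $(2) \Rightarrow (1)$ step, since the failure of $\A = \U$ in the exceptional cases is classically recorded for the base-ring setting rather than over an arbitrary $\O$; all other pieces are direct appeals to Theorem~\ref{thm:big1}, Theorem~\ref{thm:1}, and Muller's theorem.
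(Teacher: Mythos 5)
Your proposal is correct and takes essentially the same route as the paper: the paper's proof is exactly the combination of Theorem~\ref{thm:big1}, Theorem~\ref{thm:1}, and Theorem~\ref{thm:LA_AU}, which your cycle $(1)\Rightarrow(3)\Rightarrow(2)\Rightarrow(1)$ simply makes explicit. The coefficient-ring subtlety you flag in $(2)\Rightarrow(1)$ is resolved in the paper inside Theorem~\ref{thm:LA_exceptional} (the grading argument there is independent of the choice of $\O$, so the element $Z$ lies in $\U_\O\setminus\A_\O^\bullet$ for every $\O$) and, for once-punctured closed surfaces, by Lemma~\ref{lem:1punc_no_au}; your plan of tracking an explicit element of $\U\setminus\A$ is precisely how this is handled.
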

\begin{proof}
This result follows from Theorem~\ref{thm:big1}, Theorem~\ref{thm:1}, and Theorem~\ref{thm:LA_AU}.
\end{proof}

Muller showed that in general the cluster algebra is equal to the upper cluster algebra when it is locally-acyclic \cite{muller}. That is, the third condition implies the second condition. This is the only known direct implication of the three conditions above.
To obtain our results we prove or disprove each condition for every quiver under consideration.

Amongst many other results in \cite{gross} Gross, Hacking, Keel, and Kontsevich show that the Fock-Goncharov conjecture about the existence of a canonical basis for cluster algebras holds when a cluster algebra has a quiver with a maximal green sequence and the cluster algebra is equal to its upper cluster algebra.

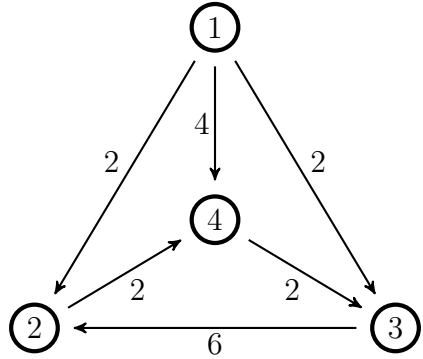
\begin{figure}
\tikzset{vertex/.style = {shape=circle,draw,minimum size=1.5em, ultra thick}}
\tikzset{fvertex/.style = {shape=rectangle,draw,minimum size=2em, ultra thick}}
\tikzset{qarrow/.style = {->, shorten <=5, shorten >=5, >=stealth', thick}}

\begin{tikzpicture}[scale = 0.8]
\node[vertex] (A) at (0,0){2};
\node[vertex] (B) at (6,0){3};
\node[vertex] (C) at (3,5){1};
\node[vertex] (D) at (3,1.8){4};

\draw[qarrow] (C) to node [above left]{ 2} (A); 
\draw[qarrow] (C) to node [above right]{ 2} (B); 
\draw[qarrow] (A) to node [below right]{ 2} (D); 
\draw[qarrow] (D) to node [below left]{ 2} (B); 
\draw[qarrow] (B) to node [below]{ 6} (A); 
\draw[qarrow] (C) to node [left]{ 4} (D);

\end{tikzpicture}

\caption{A quiver which provides a counterexample to Conjecture~\ref{conj:false}. We refer to this quiver as $Q_{ce}.$}\label{fig:counterexample}
\end{figure}
The quiver from Figure~\ref{fig:counterexample} which we will refer to as $Q_{ce}$ provides a counterexample to Conjecture~\ref{conj:false}. One interesting observation about $Q_{ce}$ is that it does admit a \emph{green-to-red sequence}. These mutation sequences are a generalization of maximal green sequences. Furthermore, it is shown in \cite{muller2} that the existence of a green-to-red sequence for a quiver $Q$ implies the existence of a green-to-red sequence for every quiver in $\mut(Q)$. 
\begin{theorem}\label{thm:big3}
 There is no quiver in $\mut(Q_{ce})$ which admits a maximal green sequence. The cluster algebra $\A(Q_{ce})$ is locally-acyclic, and hence equal to its upper cluster algebra. Every quiver in $\mut(Q_{ce})$ admits a green-to-red sequence. 
\end{theorem}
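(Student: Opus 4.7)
The plan is to handle the three assertions of the theorem in separate, largely independent steps.

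The green-to-red sequence claim is the easiest and I would tackle it first: exhibit one explicit mutation sequence $\mu_{i_k}\cdots\mu_{i_1}$ applied to the framed quiver $\hat{Q}_{ce}$ after which every frozen vertex receives only incoming arrows, so the $c$-matrix is the negative of a permutation matrix. Such a sequence can be searched for by hand or with Keller's mutation applet; producing a single one suffices. Once it is in hand, the mutation-invariance of the existence of a green-to-red sequence proved in \cite{muller2} transfers the property to every quiver in $\mut(Q_{ce})$.

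For the local-acyclicity claim, my plan is to apply Muller's covering criterion from \cite{muller}. The idea is to produce a finite collection of cluster variables $z_1,\dots,z_m$ such that (i) the ideal they generate in $\A(Q_{ce})$ is the unit ideal, equivalently $\bigcap_i V(z_i)=\emptyset$ in $\spec \A(Q_{ce})$, and (ii) for each $i$ the localization $\A(Q_{ce})[z_i^{-1}]$ is a cluster algebra on an acyclic ice quiver. Natural candidates are the initial cluster variables together with the cluster variables obtained by a single mutation at each vertex of $Q_{ce}$; freezing such a variable often opens two-cycles in the ice quiver that can be cancelled, and in favorable cases each restricted quiver becomes acyclic. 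Once local acyclicity is verified, Muller's theorem immediately gives $\A(Q_{ce})=\U(Q_{ce})$.

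For the no-MGS claim, the difficulty is that $\mut(Q_{ce})$ is infinite, so a finite case analysis is unavailable. My plan is to find a mutation-invariant structural obstruction. One line to pursue is to show that every quiver in $\mut(Q_{ce})$ contains a full subquiver belonging to the family of Seven-type obstructive configurations of \cite{seven}, which is known to preclude a maximal green sequence; another is to translate the problem into the language of $c$-vectors and to exhibit a ray in $\mathbb{R}^n_{\geq 0}$ that is missed by every finite union of $g$-vector cones arising from a candidate green sequence, so no exhaustion can occur. A third, more hands-on option is a Ladkani-style argument \cite{ladkani}: identify a vertex $v$ and a combinatorial quantity attached to $v$ that strictly increases under every admissible green mutation, and therefore prevents the process from terminating.

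The main obstacle is the no-MGS step: verifying the other two conditions reduces to finite explicit computations, but ruling out a maximal green sequence across an infinite mutation class demands a genuine invariant. Once that invariant is identified, the three parts combine to give the stated theorem.
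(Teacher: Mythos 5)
Your first two steps track the paper's argument: the green-to-red claim is handled there exactly as you propose (an explicit sequence, $(1,4,3,4,2,4)$, plus the mutation-invariance result of \cite{muller2}), and local-acyclicity is proved by the covering mechanism you describe, made concrete by observing that vertex $1$ is a source of $Q_{ce}$, so the arrow $1\to 2$ lies in no bi-infinite path, whence $x_1,x_2$ are a covering pair (hence coprime) and the two freezings are acyclic; Lemma~\ref{lem:short_LA} then finishes. Your sketch of that step is vaguer about how coprimality is certified, but the idea is the same.

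The genuine gap is the no-MGS step, and none of your three proposed lines contains the ideas that actually make it work. First, your plan never addresses the possibility of an acyclic quiver in $\mut(Q_{ce})$: an acyclic quiver always admits a maximal green sequence, so any proof must show $\mut(Q_{ce})$ contains no acyclic member. The paper does this via Seven's admissible quasi-Cartan companion criterion (recast as the ``admissible coloring'' argument of Theorem~\ref{thm:admissible} and Corollary~\ref{cor:no_acyclic}); nothing in your sketch supplies this. Second, for the non-acyclic members the paper's obstruction is not a Seven-type subquiver, a $c$-vector cone argument, or a Ladkani-style monotone quantity; it is the combination of (i) the elementary fact (Lemma~\ref{lem:lampe}) that the gcd of the entries of each column of the exchange matrix is a mutation invariant --- here every column gcd is $2$, so \emph{every} arrow of \emph{every} quiver in $\mut(Q_{ce})$ has multiplicity at least $2$ --- with (ii) the theorem of \cite{Brustle2017} that a maximal green sequence never mutates at the head of a multiple arrow, so an oriented cycle all of whose arrows are multiple kills any MGS via the induced-subquiver property (Theorem~\ref{thm:subquiver}, Corollary~\ref{cor:bad_cycle}). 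Your option (a) gestures at a subquiver obstruction but \cite{seven} only treats $\mathbb{X}_7$ and provides no family of configurations to search for here, and options (b) and (c) are not accompanied by any candidate invariant; as written, the plan has no mechanism that survives the infinitely many quivers in the class, which is precisely the difficulty you flag but do not resolve.
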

\begin{proof}
The result follows from Lemma~\ref{lem:ce_1}, Lemma~\ref{thm:LA_AU}, Corollary~\ref{cor:ce_1}, and Corollary~\ref{cor:ce_2}.
\end{proof}

We offer the following conjecture to replace Conjecture~\ref{conj:false} and subsequently provide further evidence for its validity. 

\begin{conj}\label{conj:big}
Let $Q$ be a quiver with $n$ vertices. The following are equivalent:
\begin{enumerate}
\item The quiver $Q$ has a green-to-red sequence.
\item There exists an open set $\O \subseteq \spec(\Z[y_1^{\pm 1},\dots,y_n^{\pm 1}])$ such that the cluster algebra generated over the the ring of regular functions on $\O$ with principal coefficients is equal to its upper cluster algebra.
\item There exists an open set $\O \subseteq \spec(\Z[y_1^{\pm 1},\dots,y_n^{\pm 1}])$ such that the cluster algebra generated over the the ring of regular functions on $\O$ with principal coefficients is locally-acyclic.
\end{enumerate}
Furthermore, when any of these properties hold, the same open set can be used for (2) and (3). 
\end{conj}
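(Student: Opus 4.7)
The plan is to split the equivalence into $(3)\Rightarrow(2)$, $(1)\Rightarrow(3)$, and $(2)\Rightarrow(1)$. The implication $(3)\Rightarrow(2)$ is immediate from Muller's theorem \cite{muller} that every locally-acyclic cluster algebra agrees with its upper cluster algebra, and since the same algebra is tested against the same open set $\O$, this also yields the ``same open set'' addendum from the (3) side. The essential work therefore lies in $(1)\Rightarrow(3)$ and $(2)\Rightarrow(1)$.

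For $(1)\Rightarrow(3)$, I would attempt to generalize the strategy used for $Q_{ce}$ in Theorem~\ref{thm:big3} and the mutation-finite cases of Theorem~\ref{thm:big1}: produce, from a green-to-red sequence, an explicit family of cluster variables whose principal localizations assemble (after restricting to a suitable open $\O \subseteq \spec(\Z[y_1^{\pm 1},\dots,y_n^{\pm 1}])$) into an acyclic cover of $\spec \Adec(Q)$. A green-to-red sequence forces a particular $c$-vector configuration (all rows eventually negative) and determines a distinguished chain of seeds relating the initial quiver to its mutation-class opposite; one could try to read an acyclic cover off this chain by freezing along intermediate vertices and invoking the fact, already used in \cite{muller2}, that freezing preserves both green-to-red sequences and local-acyclicity. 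The role of $\O$ is to delete the coefficient loci on which the candidate acyclic seeds degenerate, and tracking these loci through a long mutation sequence while maintaining that they cover $\spec \Adec(Q)$ is the first serious technical obstacle.

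The genuinely hard direction is $(2)\Rightarrow(1)$, which asks that an algebraic identity $\Adec(Q)=\U(Q)$ over the regular functions on $\O$ force the existence of a specific finite mutation sequence with a combinatorial sign condition on $c$-vectors. My plan would be to employ the scattering-diagram and theta-function framework of Gross, Hacking, Keel and Kontsevich \cite{gross}: the equality $\A=\U$ is closely tied to the consistency of the cluster scattering diagram and to the theta basis being integral, and when these hold one expects the diagram to contain a chamber whose $g$-vector fan is the negative of the initial seed's, which is precisely the combinatorial content of a green-to-red sequence. Making this precise without assuming finiteness of the exchange graph is where no known technique currently bridges the gap, and this is the main obstacle; indeed the counterexample $Q_{ce}$ suggests that the correct combinatorial invariant sits exactly at the green-to-red level rather than the maximal-green level, which is why I expect scattering diagrams (whose walls are indexed by positive $c$-vectors, not by individual mutation steps) to be the natural home for this argument.

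A final piece, the ``same open set'' refinement, should follow from a compatibility lemma: if an $\O_1$ witnesses (1)$\Rightarrow$(3) and an $\O_2$ witnesses (2), one needs to know that $\O_1\cap\O_2$ still witnesses both. This should reduce to checking that shrinking $\O$ preserves the equality $\A_\O^\bullet = \U_\O^\bullet$ and preserves local-acyclicity, which is routine from the definitions once the harder implications above are in hand.
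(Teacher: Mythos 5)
The statement you are proving is Conjecture~\ref{conj:big}: the paper does not prove it, and neither does your proposal. The only implication in the conjecture that is actually known is $(3)\Rightarrow(2)$, via Theorem~\ref{thm:LA_AU}, and you identify this correctly; everything else in your text is a research program rather than an argument. For $(1)\Rightarrow(3)$ you assert that a green-to-red sequence should yield coprime cluster variables whose freezings assemble into an acyclic cover, but no step of this is carried out, and there is no known mechanism that extracts such a cover from the $c$-vector data of a reddening sequence. Note that the paper's own local-acyclicity verifications (Lemma~\ref{lem:ce_1}, Theorem~\ref{thm:LA_closed_surface}, Theorem~\ref{thm:LA_1punc_1bound}) never use the green-to-red sequence at all: they rely on case-specific coprimality inputs --- a source vertex and Lemma~\ref{lem:cover_is_coprime} for $Q_{ce}$, and skein/smoothing relations (Theorem~\ref{thm:smooth_relation}, Theorem~\ref{thm:msw_formula}) for surfaces --- together with the choice of $\O$ to invert the resulting coefficient monomials. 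Your claim that ``freezing preserves green-to-red sequences and local-acyclicity'' is also not something established in the paper or in \cite{muller2}, so even the inductive skeleton of your plan rests on unproved assertions.

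The direction $(2)\Rightarrow(1)$ is where the proposal fails outright, and you concede as much (``no known technique currently bridges the gap''). The scattering-diagram heuristic --- that $\A=\U$ should force a chamber realizing the negative of the initial $g$-vector cone --- is plausible motivation, but \cite{gross} gives implications in the opposite direction (a green-to-red sequence yields good behavior of theta functions), not a converse, and nothing in your sketch converts the algebraic equality over some open $\O$ into the existence of a finite mutation sequence with the required sign condition. The ``same open set'' refinement likewise cannot be settled before the main implications exist: shrinking $\O$ enlarges the ground ring by further localization, and you have not checked that either $\A_\O^\bullet=\U_\O$ or local-acyclicity is stable under such a change of ground ring (the paper's own citation of \cite{BMS} shows that $\A=\U$ can genuinely depend on the ground ring, so this is not routine). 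In short: what you have is a reasonable statement of why the conjecture is plausible and where the obstacles lie, matching the paper's posture of offering evidence (the classes treated in \cite{lawson}, double Bruhat cells, Banff quivers) rather than a proof; it should not be presented as a proof of the equivalence.
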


Although not explicitly stated in the work, John Lawson and the author prove Conjecture~\ref{conj:big} for a class of quivers called minimal mutation-infinite quivers \cite{lawson}. In an announced work Yakimov and Goodearl show that quivers associated to double Bruhat cells have maximal green sequences \cite{Yakimov}. Muller and Speyer show that cluser algebras from grassmanians are locally-acyclic, which implies that double Bruhat cells are locally-acyclic \cite{MS}. In the defining work of upper cluster algebras \cite{cluster3} Berenstein, Fomin, and Zelevinsky show that cluster algebras from double Bruhat cells are equal to their upper cluster algebra. Together, these results imply that Conjecture~\ref{conj:big} holds for double Bruhat cells. Bucher, Machacek, and Shapiro have provided an example of where the equivalence of the upper cluster algebra and cluster algebra depends on a choice of ground ring \cite{BMS}. Bucher and Machacek have shown that Banff quivers have green-to-red sequences \cite{BS} and Muller has shown that they are locally-acyclic \cite{muller}. Hence,  Conjecture~\ref{conj:big} for the class of Banff quivers. 

In Section~\ref{sec:background} we recall relevant background information on several topics. In Section~\ref{sec:cluster_algebra} we provide the details for the proofs of Theorem~\ref{thm:big1} and Theorem~\ref{thm:big2}. In Section~\ref{sec:counterexample} we provide the details to the proof of Theorem~\ref{thm:big3}

\section{Background}\label{sec:background}

\subsection{Quivers and mutation}
\begin{definition}
A \hdef{(cluster) quiver} is a directed multigraph with no loops or 2-cycles. 
The number of vertices is called the \hdef{rank} of $Q$. 
When a quiver has no directed cycle it is called \hdef{acyclic}.

An \hdef{ice quiver} is a pair $(Q, F)$ where $Q$ is a quiver and $F$ is a subset of the vertices of $Q$ called \hdef{frozen vertices;} such that there are no edges between frozen vertices. If a vertex of $Q$ is not frozen it is called \hdef{mutable}. 
 The \hdef{mutable part} of an ice quiver $(Q,F)$ is the quiver obtained by deleting all of the vertices of $F$ and arrows with an endpoint in $F$.  An ice quiver is said to be acyclic if its mutable part is acyclic. The rank of an ice quiver is the rank of its mutable part. 
\end{definition}

We now define a procedure called mutation that takes a quiver and a vertex to produce a new quiver.

\begin{definition}\label{def:mutation}
Let $Q$ be a quiver and $k$ a vertex of $Q$. The \hdef{mutation} of $Q$ at vertex $k$ is denoted by $\mu_k$, and is a transformation of $Q$ to a new quiver $\mu_k(Q)$ that has the same vertex set, but making the following adjustment to the arrows:  \begin{enumerate}
\item For every 2-path $i \rightarrow k \rightarrow j$, add a new arrow $i \rightarrow j$. 
\item Reverse the direction of all arrows incident to $k$. 
\item Delete a maximal collection of 2-cycles created during the first step.
\end{enumerate} 
The mutation of an ice quiver follows the same process as the mutation of a quiver except that we only allow mutation to occur at mutable vertices and in the third step of the procedure we delete a maximal collection of 2-cycles and any arrows that are created between frozen vertices. 
\end{definition}

Mutation is an involution and gives an equivalence relation on the set of quivers. We
denote the equivalence class of all quivers under this relation by $\mut(Q)$. That is, $\mut(Q)$ is the set of all quivers that can be obtained from $Q$ by a finite sequence of mutations. 
If $\mut(Q)$ is finite (resp., infinite), then $Q$ is \hdef{mutation-finite} (resp., \hdef{mutation-infinite}). Mutation finite quivers have been classified by Felikson, Shapiro, and Tumarkin. 

\begin{theorem}{\cite{FST}}\label{thm:mufinte}
A quiver $Q$ is mutation finite if and only if \begin{enumerate}
\item $Q$ has rank 2;
\item $Q$ arises from a triangulation of a marked surface;
\item or $Q$ is in one of 11 exceptional mutation classes. 
\end{enumerate}
\end{theorem}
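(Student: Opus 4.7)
The plan is to establish the two directions separately. The forward direction (each of the listed types is mutation-finite) reduces to a short case analysis, while the reverse direction (no other mutation-finite quivers exist) is the substantive content and dominates the work. I would follow the Felikson--Shapiro--Tumarkin argument: reduce to the analysis of subquivers of small rank, classify rank-$3$ and rank-$4$ mutation-finite quivers, and then show that any rank-$n$ mutation-finite quiver with $n \geq 5$ not on the exceptional list can be constructed by gluing ``blocks'' that correspond to the pieces of a triangulation.

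For the easy direction, a rank-$2$ quiver $Q$ has a single arrow-type between its two vertices, and mutation merely reverses that arrow, so $|\mut(Q)| \leq 2$. If $Q$ arises from a triangulation $T$ of a marked surface $(S,M)$, I would invoke the compatibility of quiver mutation with diagonal flips: mutating $Q$ at the vertex corresponding to a diagonal $d \in T$ produces the quiver of the triangulation $T'$ obtained by flipping $d$. Since $(S,M)$ admits only finitely many triangulations up to isotopy, $\mut(Q)$ is finite. For the $11$ exceptional mutation classes one verifies finiteness by direct computation of $\mut(Q)$ starting from a representative; this is algorithmic and can be checked, e.g., in Keller's applet.

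For the hard direction, the main tool is that mutation-finiteness is hereditary under induced subquivers: if $Q'$ is obtained from $Q$ by deleting a vertex and all incident arrows, then $Q'$ is mutation-finite whenever $Q$ is. I would first classify mutation-finite quivers of rank $3$ explicitly — these are exactly the oriented cycles with arrow multiplicities $(a,b,c)$ satisfying the Markov-type inequality together with the acyclic quivers whose underlying diagram is of finite or affine type. Next, I would analyze rank-$4$ mutation-finite quivers, producing a short list that contains no arrow of multiplicity greater than $2$ outside of certain controlled configurations. Then the structural statement I would aim for is a \emph{block decomposition}: every mutation-finite quiver of rank $\geq 3$ that is not in one of the $11$ exceptional classes is obtained by gluing elementary blocks (associated to triangles, digons, and punctured triangles in the surface) along identified vertices, with the gluing prescribed by a triangulation of a marked surface. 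Once such a decomposition exists, reconstructing $(S,M)$ from the combinatorics of the blocks yields a triangulation realizing $Q$.

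The main obstacle is precisely the block decomposition theorem. One must rule out all possible local configurations around an arbitrary vertex $v$ that are not already the neighbourhood of a vertex in some surface triangulation or in an exception. Concretely, one examines the induced subquiver on $\{v\} \cup N(v)$ and uses the rank-$3$ and rank-$4$ classifications at every triple and quadruple of vertices to constrain the arrow multiplicities and orientations; the delicate combinatorics arise when blocks meet, because multiplicities must add up consistently without producing a forbidden rank-$3$ subquiver. A secondary, purely finite obstacle is certifying that the $11$ exceptional mutation classes are genuinely distinct from each other and from every surface class, which I would handle by computing mutation invariants (such as the number of arrows, or the spectrum of the associated symmetrizable Cartan matrix) that separate them.
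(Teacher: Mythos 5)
The paper does not prove this statement at all: it is quoted from Felikson--Shapiro--Tumarkin \cite{FST}, so the only thing to compare your proposal against is the cited source. Your outline does track that published argument: heredity of mutation-finiteness under induced subquivers, an explicit rank-$3$ (and rank-$4$) analysis, control of arrow multiplicities, the block-decomposition criterion of Fomin--Shapiro--Thurston identifying block-decomposable quivers with adjacency quivers of triangulations, and a finite computation for the eleven exceptional classes together with invariants separating them. You also correctly locate the real work in the block-decomposition induction, which you leave as a plan rather than a proof; as a proposal that is a fair description of the same route, not a different one.

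One step as written is wrong, though it is in the easy direction and is repairable. A marked surface does not in general have finitely many triangulations up to isotopy: already the once-punctured torus has infinitely many, permuted by its mapping class group, so ``finitely many triangulations, hence $\mut(Q)$ finite'' does not work as stated. The correct argument combines the flip/mutation compatibility you invoke with the observation that every quiver $Q_{T'}$ of a triangulation has the same fixed number of vertices and arrow multiplicities bounded by $2$, so only finitely many quivers can arise; equivalently, one may count triangulations up to homeomorphism of $(S,M)$ rather than isotopy. Relatedly, your rank-$3$ classification is stated too loosely (``Markov-type inequality'') to support the induction that leans on it: for oriented triangles with weights $(a,b,c)$ the mutation-finite cases are exactly $(1,1,1)$, $(2,1,1)$, and the Markov quiver $(2,2,2)$, while for instance $(2,2,1)$ is mutation-infinite even though all its weights are at most $2$, so the precise list has to be nailed down before the gluing analysis can be carried out.
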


We denote the vertices of a quiver $Q$ by $Q_0$ and the arrows by $Q_1$. 
\begin{definition}{\cite[Definition 2.4]{brustle}}
The \hdef{framed quiver} associated to a quiver $Q$ is the ice quiver $(\hat{Q},Q_0^*)$ such that:

$$Q_0^*=\{i^*\text{ }|\text{ }i\in Q_0\}, \hspace{.6cm} \hat{Q}_0 = Q_0 \sqcup Q_0^*$$
$$\hat{Q}_1 = Q_1 \sqcup \{i \to i^*\text{ }|\text{ }i \in Q_0\}$$
Since the frozen vertices of the framed quiver are so natural we will simplify the notation and just write $\hat{Q}$. 
\end{definition}
\begin{definition}{\cite[Definition 2.5]{brustle}}\label{def:green1}
Let $R \in \mut(\hat{Q})$. \\A mutable vertex $i \in R_0$ is called \hdef{green} if $$\{j^*\in Q_0^*\text{ }| \text{ } \exists \text{ } j^* \rightarrow i \in R_1 \}=\emptyset.$$ It is called \hdef{red} if $$\{j^*\in Q_0^*\text{ }| \text{ } \exists \text{ } j^* \leftarrow i \in R_1 \}=\emptyset.$$ 
\end{definition}

While it is not clear from the definition that every mutable vertex in $R_0$ must be
either red or green, this was shown to be true for quivers in~\cite{derksen} and
was also shown to be true in a more general setting in~\cite{gross}.

\begin{theorem}\cite{derksen,gross}\label{thm:signcoh}
Let $R \in \mut(\hat{Q})$. Then every mutable vertex in $R_0$ is either red or green.
\end{theorem}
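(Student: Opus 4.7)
The claim asserts that for each mutable vertex $i$ in $R$, all arrows between $i$ and the frozen vertices $Q_0^*$ point the same way. Equivalently, encoding these arrows by the $c$-vector
$$c_i^R := \bigl(\#\{j^* \to i \text{ in } R_1\} - \#\{i \to j^* \text{ in } R_1\}\bigr)_{j \in Q_0} \in \Z^n,$$
the theorem says that each $c_i^R$ is sign-coherent: either all entries are $\geq 0$ (so $i$ is green) or all entries are $\leq 0$ (so $i$ is red).

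The first attempt is induction on the length of a mutation sequence from $\hat{Q}$ to $R$. The base case is clear, since in $\hat{Q}$ the only arrow at $i$ involving a frozen vertex is $i \to i^*$, so $c_i^{\hat{Q}} = -e_i$. For the inductive step $R \to R' = \mu_k(R)$ the quiver mutation rule of Definition~\ref{def:mutation} expresses each new $c_i^{R'}$ as an integer combination of the old vectors $c_i^R$ and $c_k^R$. The obstacle, which is exactly what makes the theorem nontrivial, is that the sum of two sign-coherent integer vectors need not itself be sign-coherent; so one cannot close the induction on purely combinatorial grounds.

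To get past this, the plan is to follow the categorification strategy of Derksen, Weyman, and Zelevinsky. Attach a generic potential $W$ to $\hat{Q}$ so that the Jacobian algebra $J(\hat{Q},W)$ is finite-dimensional and so that mutation of quivers with potential lifts mutation of $\hat{Q}$. Working inside the category of decorated representations of $J(\hat{Q},W)$, the key technical step is to show that each $c$-vector $c_i^R$ coincides, up to a uniform sign convention, with the dimension vector of an indecomposable decorated representation. Since every such indecomposable is either an honest module, with nonnegative dimension vector, or a shifted negative simple, with nonpositive dimension vector, this dichotomy forces each $c_i^R$ to be sign-coherent, which is the statement of the theorem.

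The main obstacle is setting up the categorification rigorously: proving that mutation of decorated representations is well defined, that it lifts mutation of $\hat{Q}$, and that the $c$-vectors are correctly identified with the dimension vectors of indecomposable decorated representations described above. A parallel and more general route is to use the scattering-diagram approach of Gross, Hacking, Keel, and Kontsevich, where sign-coherence emerges from the consistency of the cluster scattering diagram via the existence of broken lines and theta functions; this has the advantage of covering the skew-symmetrizable case as well, at the cost of a rather elaborate construction of the scattering diagram itself. Either route relies on substantial external machinery, and no elementary combinatorial proof of Theorem~\ref{thm:signcoh} appears to be known.
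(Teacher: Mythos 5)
The paper offers no proof of Theorem~\ref{thm:signcoh} at all: it is imported as a black box from \cite{derksen} and \cite{gross}, which are exactly the two routes you sketch (the Derksen--Weyman--Zelevinsky quiver-with-potential categorification and the Gross--Hacking--Keel--Kontsevich scattering-diagram argument). So at the level of identifying what the statement really is --- sign-coherence of the $c$-vectors $c_i^R$ --- and where a proof must come from, your proposal matches the paper's (implicit) stance. Your observation that the naive induction on the length of the mutation sequence fails, because a sum of sign-coherent vectors need not be sign-coherent, is also correct and is precisely why the result is nontrivial.

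However, judged as a proof rather than a reading guide, your proposal has a genuine gap: every step that carries the actual content is asserted, not established. The well-definedness of mutation of quivers with potential and of decorated representations, its compatibility with quiver mutation of $\hat{Q}$, and above all the identification of each $c_i^R$ (up to sign) with the dimension vector of an indecomposable decorated representation are exactly the hard theorems, and you explicitly defer them. Moreover, the specific identification you lean on is not quite how \cite{derksen} proceeds: there, sign-coherence is extracted from the $g$-vector and $E$-invariant machinery, while the cleaner statement that $c$-vectors are dimension vectors of indecomposable rigid modules or negative simples is a later refinement that itself needs genericity of the potential and a rigidity argument --- the bare dichotomy ``indecomposable decorated representation is a module or a shifted negative simple'' does not by itself tell you that the object attached to a given $c$-vector is indecomposable or that its sign is uniform. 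So as written the argument is a plan whose crucial middle is missing; to make it a proof you would either have to carry out that categorification in detail or, as the paper does, simply cite \cite{derksen,gross} and not claim an independent argument.
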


\begin{definition}[{\cite[Definition 2.8]{brustle}, \cite[Section 5.14]{keller2}}]
A \hdef{green sequence} for $Q$ is a sequence of vertices $\mgsi=(i_1, \dotsc, i_l)$ of $Q$ if for any $1\leq k \leq l$, the vertex $i_k$ is green in $\mu_{i_{k-1}}\circ \dotsb \circ \mu_{i_1}(\hat{Q})$.
%The integer $l$ is called the length of the sequence $\mgsi$ and is denoted by $l(\mgsi)$. 
A green sequence $\mgsi$ is \hdef{maximal} if every mutable vertex in $\mu_{i_{l}}\circ \dotsb \circ \mu_{i_1}(\hat{Q})$ is red. We will usually denote the composition $\mu_{i_{l}}\circ \dotsb \circ \mu_{i_1}$ by $\mu_{\mgsi}$. 
A \hdef{green-to-red sequence} is a mutation sequence $\mu$ such that all the vertices of $\mu(\hat{Q})$ are red. 
\end{definition} 

%\begin{example}
%The mutation sequence $\mgsi = (1,2)$ is a maximal green sequence for the Kronecker quiver shown in the top left corner of Figure~\ref{fig:mgs}. The mutations are carried out in the top row of this figure.
%\end{example}

We recall the result of the authors previous work that classifies the existence of maximal green sequences for mutation-finite quivers \cite{mills}.
{

\begin{theorem}\cite{mills}\label{thm:1}
For every finite mutation class of quivers, either there is no quiver in the mutation class with a maximal green sequence or every quiver in the mutation class has a maximal green sequence. Furthermore, the only mutation class whose quivers have no maximal green sequence are $\mathbb X_7$ and those arising from once-punctured closed surfaces. 
\end{theorem}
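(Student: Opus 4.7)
The plan is a case-by-case analysis driven by the Felikson--Shapiro--Tumarkin classification in Theorem~\ref{thm:mufinte}. Rank $2$ is handled directly: for any rank-$2$ quiver, an explicit green sequence of length $2$ is easily verified to be maximal, so the statement is trivial. This reduces the problem to quivers from triangulated marked surfaces and to the $11$ exceptional mutation classes. The real content is then to show, in each remaining class, either that \emph{every} representative admits a maximal green sequence or that \emph{no} representative does; this uniformity is not automatic, since Muller showed in \cite{muller2} that admitting a maximal green sequence is not in general a mutation-invariant property.

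For the surface classes, the strategy is to use the dictionary between triangulations of a marked surface $(S,M)$ and quivers in its mutation class, so that mutation corresponds to a flip. For surfaces with nonempty boundary and for closed surfaces with at least two punctures, the approach is to build, for each triangulation $T$, an explicit sequence of flips that realizes a maximal green sequence for the framed quiver associated to $T$. One does this by choosing a ``target'' triangulation $T_0$ for which a maximal green sequence is already known from the work of Alim et al.~\cite{Alim2013a}, Bucher \cite{bucher}, and Bucher--Mills \cite{Bucher2017}, and then conjugating that sequence by a chain of flips connecting $T$ to $T_0$, adjusting locally at each flip to preserve greenness via the sign-coherence of $c$-vectors (Theorem~\ref{thm:signcoh}). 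The obstructive case is the once-punctured closed surface: here Ladkani \cite{ladkani} produces a representation-theoretic invariant of the associated Jacobian algebra that forbids any maximal green sequence, independently of the triangulation, yielding the uniform ``no'' direction.

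For the $11$ exceptional mutation classes, each class is finite and small, so one proceeds by explicit enumeration. For every class other than $\mathbb{X}_7$, one exhibits a maximal green sequence for every quiver in the class via a direct computer-assisted search, seeded by known constructions for the type $E$ Dynkin quivers, their affine and elliptic analogues, and $\mathbb{X}_6$, and propagated across the mutation class; again, because the property is not mutation-invariant, this verification must be made quiver-by-quiver. For $\mathbb{X}_7$, which consists of two quivers, one rules out the existence of a maximal green sequence using Seven's obstruction in \cite{seven}, which identifies a periodic $c$-vector pattern that prevents termination at a fully red seed. The main obstacle throughout is precisely this non-invariance: one cannot simply produce a single good quiver per class, so the ``all or none'' dichotomy in the statement emerges only as an \emph{output} of the proof, after maximal green sequences have been constructed for every quiver in each non-obstructed class and ruled out for every quiver in $\mathbb{X}_7$ and in once-punctured closed surface classes.
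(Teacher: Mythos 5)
There is a genuine gap at the heart of your surface argument. The paper itself does not prove Theorem~\ref{thm:1}; it quotes it from \cite{mills}, and your overall skeleton (reduce via the classification of Theorem~\ref{thm:mufinte}, handle rank $2$ directly, cite \cite{ladkani} for once-punctured closed surfaces and \cite{seven} for $\mathbb{X}_7$, exhaust the finite exceptional classes by explicit quiver-by-quiver verification) does match the structure of the cited proof. What does not work is the step that carries all the content in the surface case: ``conjugating'' a known maximal green sequence for a target triangulation $T_0$ by a chain of flips connecting $T$ to $T_0$, ``adjusting locally at each flip to preserve greenness via sign-coherence.'' There is no such transport mechanism. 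Sign-coherence (Theorem~\ref{thm:signcoh}) only guarantees that every mutable vertex of a quiver in $\mut(\hat{Q})$ is red or green; it gives no procedure for modifying a green sequence when the initial seed is changed by one mutation. Indeed, the non-invariance results you yourself invoke from \cite{muller2} show that any naive transfer of a maximal green sequence across a mutation must fail in general, so an argument of this shape cannot succeed without substantial new input, and that input is precisely what is missing. Since the prior constructions of \cite{Alim2013a}, \cite{bucher}, and \cite{Bucher2017} only supply a maximal green sequence for particular triangulations (one good quiver per class), your proposal leaves the passage from ``some quiver in the class'' to ``every quiver in the class'' unproven, which is exactly the nontrivial part of the theorem.

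The actual proof in \cite{mills} does not transport sequences at all: for an arbitrary triangulation of a surface with nonempty boundary, and separately for closed surfaces with at least two punctures, it constructs an explicit maximal green sequence adapted to that triangulation (a direct combinatorial/inductive construction organized by the triangulation itself), and it verifies the exceptional classes by checking every quiver in each finite class. Your treatment of the negative cases is fine in outline (Ladkani's obstruction for once-punctured closed surfaces applies to every triangulation, and $\mathbb{X}_7$ consists of only two quivers), and your rank-$2$ and exceptional-class remarks are acceptable, but as written the positive surface case --- the bulk of the theorem --- rests on a step that would fail.
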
 

}

%Therefore there is some permutation $\sigma$ on the vertices of the quiver which
%maps $\mu_{\mgsi} \left( Q \right)$ to $Q$. This permutation is referred to as
%the permutation induced by $\mgsi$.

\subsection{Cluster algebras with principal coefficients}\label{sec:cluster_algebra}
%
%For a ring $R$ the \hdef{spectrum} of $R$, denoted $\spec R$, is the set of all prime ideals of $R$. For any ideal $I \subset R$ we define $V(I) = \{ p \in \spec R | p \supset I \}$. The \hdef{Zariski topology} on $\spec R$ is defined by saying a set $P \subset \spec R$ is closed if and only if $P= V(I)$ for some ideal $I \subset R$. For any element $f \in R$ we may 

Let $\tilde{Q} = (Q,F)$ be an ice quiver with mutable vertices $\{1,\dots,n\}$ and frozen vertices $\{1^*,\dots, j^*\}$. Let $\mathcal{Z}=\mathcal{Z}(\tilde{Q}) =  \Z[y_1^{\pm 1},y_2^{\pm 1},\dots,y_j^{\pm 1}]$ and take $\O$ to be any open subset of $\spec(\mathcal{Z})$. 
We take the ground ring for our cluster algebra to be the ring of regular functions on $\O$. 
In this work we will only consider open sets of the form $U = \bigcap_{f \in \Lambda} D_f$ for some finite set $\Lambda \subset \ZZ$. Here $D_f$ denotes the Zariski open set $D_f = \{ p \in \spec R | f \not\in p\}.$ In this case the ring of regular functions is $\ZZ[\Lambda^{-1}]$ where $\Lambda^{-1} = \{ f^{-1} | f \in \Lambda\}$ which we will denote as $\O_Y.$

We now extend the operation of mutation to construct the generators for our algebra.

Let $\F$ be a field that contains $\O_Y$. A \hdef{seed} in $\F$ is a tuple $(\x,\tilde R),$ where 
\begin{itemize}
\item the \hdef{cluster}: $\x = (x_1,\dots,x_n)$ is an $n$-tuple of elements of $\F$ which freely generate $\F$ as a field over the fraction field of $\O_Y$, and
\item $\tilde R =(R,F)$ is an ice quiver in $\mut(\tilde{Q}).$
\end{itemize}

\begin{definition}[Seed mutation.]
Let $((x_1,\dots,x_n), \tilde{R})$ be a seed in $\F$, and let $1 \leq k \leq n$. The \hdef{seed mutation} $\mu_k$ in direction $k$ transforms $(\x, \tilde{R})$ into the seed $(\x', \tilde{R'})$ which is defined by $\x' := (x_1',\dots, x_n')$ where $x_j' = x_j$ if $j \neq k$, and $$x_k' = \frac{\displaystyle \left( \prod_{k \rightarrow j^*}y_j\right) \left(\prod_{k \rightarrow j} x_j \right)+ \left(\prod_{i^* \rightarrow k} y_i \right) \left(\prod_{i \rightarrow k} x_i \right)}{x_k}.$$
Here the products run over all arrows of $\tilde{R}$ and
 $\tilde{R'} = (\mu_k(R),F).$
\end{definition}

%\begin{definition}[Seed mutation.]
%Let $(\x,\y,B)$ be a seed in $\F$, and let $1 \leq k \leq n$. The \hdef{seed mutation} $\mu_k$ in direction $k$ transforms $(\x,\y,B)$ into the seed $\mu_k(\x,\y,B)~=~(\x',\y',B')$ which is defined as follows:
%\begin{itemize}
%\item $\x' := (x_1',\dots, x_n')$ where $x_j' = x_j$ if $j \neq k$, and $$x_k' = \frac{y_k \prod x_i^{[b_{ik}]_+}+ \prod x_i^{[-b_{ik}]_+}}{(y_k \oplus 1) x_k}$$
%Here $[i]_+$ denotes $\max(i,0)$.
%\item $\y' := (y_1',\dots,y_n')$ where $$y_j' = \begin{cases} y_k^{-1} & \text{if } j = k; \\ y_j y_k^{[b_{kj}]_+}(y_k \oplus 1)^{-b_{kj}} & \text{if } j \neq k \end{cases}$$
%
%\item $B' = (b_{ij}')$ with 
%$$b_{ij}'= \begin{cases} -b_{ij} & \mbox{if } i=k\text{ or }j=k;\\ 
%b_{ij} + \frac{|b_{ik}|b_{kj} + b_{ik}|b_{kj}|}{2} & \mbox{otherwise.} \\
%\end{cases}$$
%\end{itemize}
%\end{definition}

\begin{definition}
Consider the undirected $n-$regular tree $\T_n$ whose edges are labeled by the numbers $1,\dots,n$, so that each of the $n$ edges adjacent to each vertex all have different labels. A \hdef{cluster pattern} is an assignment of a seed $(\x_t, \tilde Q_t)$ to every vertex $t \in \T_n$, such that the seeds assigned to the endpoints of any edge $t - t'$ labeled $k$ are obtained from each other by the seed mutation in direction $k.$ Cleary, a cluster pattern is uniquely determined by an arbitrary seed. 
\end{definition}

\begin{definition}
Given a cluster pattern we denote $$\mathcal{X} = \bigcup_{t \in \T_n} \x_t.$$ That is $\mathcal{X}$ is the union of clusters of all the seeds in the pattern. We denote the elements in the cluster as $$\x_t = (x_{1;t},\dots, x_{n;t}).$$ The elements $x_{j;t}$ are called \hdef{cluster variables}. For a cluster pattern containing the seed $(\x, \tilde Q)$ and a choice of open set $\O \subset \spec R$ the \hdef{cluster algebra} $\A_\O(\tilde Q)$ is the $\O_Y$-subalgebra of the ambient field $\F$ generated by all cluster variables, that is $\mathcal{A}_\O(\tilde Q) := \O_Y[\mathcal{X}]$. 
\end{definition}

\begin{theorem}\cite[Theorem 3.1]{cluster1}\label{thm:Laurent}
The cluster algebra $\A_\O$ associated with a seed $(\x,\tilde Q)$ is contained in the Laurent polynomial ring $\O_Y[\x^{\pm 1}]$, i.e., every element of $\A_\O$ is a Laurent polynomial over $\O_Y$ in the cluster variables from $\x$. 
\end{theorem}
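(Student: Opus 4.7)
The plan is to prove the Laurent Phenomenon by induction on the distance from the initial seed in the $n$-regular tree $\T_n$. Fix the vertex $t_0 \in \T_n$ at which the seed $(\x, \tilde Q)$ lives, and write $\x_{t_0} = \x$. What we need to show is that for every $t \in \T_n$ and every $j$, the cluster variable $x_{j;t}$ lies in $\O_Y[\x^{\pm 1}]$. I proceed by induction on $d := d_{\T_n}(t_0, t)$, with the base case $d=0$ immediate and the case $d=1$ following from a direct application of the exchange relation at $t_0$: mutating once in direction $k$ produces
\[
x_{k;t} = \frac{\left(\prod_{k \to j^*} y_j\right)\left(\prod_{k \to j} x_j\right) + \left(\prod_{i^* \to k} y_i\right)\left(\prod_{i \to k} x_i\right)}{x_k},
\]
which is visibly a Laurent polynomial in $\x$ over $\O_Y$.

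For the inductive step, the difficulty is that a naive induction collapses. If $t$ is obtained from $t'$ by mutation at $k$, then $x_{k;t}$ equals a sum of monomials in the other variables of $\x_{t'}$ divided by $x_{k;t'}$. By induction every $x_{i;t'}$ is a Laurent polynomial in $\x$, but $x_{k;t'}$ need not be a monomial, so the division is not formally legal in $\O_Y[\x^{\pm 1}]$ — we must argue the numerator is genuinely divisible by $x_{k;t'}$ in this ring. This is the main obstacle, and the standard remedy is to strengthen the inductive hypothesis via the \emph{Caterpillar Lemma} of Fomin–Zelevinsky: for any path in $\T_n$ between two seeds connected by a pair of adjacent mutations, the cluster variables produced are Laurent polynomials in $\x$ whose reduced denominators are monomials in only a controlled subset of the variables $x_1, \dots, x_n$.

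The key step is therefore to establish this refined statement. I would do so by analyzing the ``caterpillar'' pattern of seeds obtained by alternating mutations in two fixed directions $k$ and $\ell$ at $t_0$, showing by hand that:
\begin{enumerate}
\item $x_{k;t}$ is a Laurent polynomial in $\x$ with denominator a pure power of $x_k$, and symmetrically for $\ell$;
\item $x_k$ and $x_\ell$, and more generally the monomials appearing as denominators arising from disjoint directions, are pairwise coprime in $\O_Y[\x]$.
\end{enumerate}
Coprimality lets me combine two a priori competing Laurent expansions for the same rational function (obtained by going around a length-four path in $\T_n$ in two different ways) and conclude the common element lies in $\O_Y[\x^{\pm 1}]$. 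Unique factorization in the polynomial ring $\O_Y[x_1, \dots, x_n]$ is the arithmetic input that powers this coprimality argument.

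Once the Caterpillar Lemma is in hand, the general inductive step is reduced to a two-step comparison: to pass from distance $d$ to distance $d+1$ we factor a long mutation path through an intermediate seed and match Laurent expansions across the shared edge, using coprimality exactly as above. Formally this amounts to checking that for any $t$ adjacent to $t'$ in $\T_n$ by an edge labeled $k$, the exchange numerator $P$ defining $x_{k;t}$ vanishes on the principal ideal generated by $x_{k;t'}$ inside $\O_Y[\x^{\pm 1}]$, which after localization becomes the algebraic identity driving the whole argument. The frozen/coefficient data contributed by the monomials in $y_1, \dots, y_j$ passes through unchanged because $\O_Y$ is already inverted in the ground ring, so no additional subtlety enters from the principal coefficients beyond bookkeeping. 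Assembling these pieces yields $\mathcal{X} \subset \O_Y[\x^{\pm 1}]$, and since $\mathcal{A}_\O(\tilde Q) = \O_Y[\mathcal{X}]$ by definition, the theorem follows.
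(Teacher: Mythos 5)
The paper offers no proof of this statement: it is quoted directly from Fomin--Zelevinsky \cite{cluster1}, so there is no internal argument to compare yours against. Your outline is essentially the standard Fomin--Zelevinsky proof (induction strengthened by the Caterpillar Lemma, denominator control, coprimality plus unique factorization), and your remark about coefficients is correct in this setting: the $y_i$ are units in $\O_Y$, and $\O_Y[x_1^{\pm1},\dots,x_n^{\pm1}]$ is a localization of a polynomial ring over a UFD, hence a UFD, so the arithmetic input is available.

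Two of your intermediate claims, however, would fail as literally stated, and one of them is the heart of the proof. First, in the pattern of alternating mutations in directions $k$ and $\ell$ the denominators are not pure powers of $x_k$: already the third cluster variable in the rank-two pattern is, up to coefficient monomials, $(1+x_1+x_2)/(x_1x_2)$, so the strengthened hypothesis must allow monomials in both variables; this is why Fomin and Zelevinsky track denominators as monomials in a prescribed set of variables rather than in a single one. Second, the coprimality you invoke --- of $x_k$ and $x_\ell$, or of monomials in disjoint variables --- is trivially true and does not legitimize the division you correctly identify as the main obstacle. The nontrivial lemma in \cite{cluster1} is that the \emph{mutated} variables produced in the same direction at different stages of the caterpillar (equivalently, the once-mutated variable $x_{k;t'}$ and the relevant exchange numerator) are coprime in the Laurent ring, and this requires an explicit gcd computation with the exchange binomials, not just unique factorization. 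A small additional slip: $\T_n$ is a tree, so nothing is ``gone around'' two ways; what is compared are two expressions for the same element coming from exchange relations at different seeds along the spine. With the key coprimality lemma stated in that corrected form, the rest of your plan assembles exactly as in \cite{cluster1}.
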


\begin{theorem}\cite{MSW,leeschiffler}\label{thm:Positivity}
For any cluster algebra $\A_\O$, and any seed $(\x,\tilde R)$, and any cluster variable $x \in \A_\O$, the Laurent polynomial for $x$ expressed in terms of the cluster variables of $\x$ has coefficients that are non-negative integer linear combinations of elements in $\{y_1,\dots,y_j\}$. 
\end{theorem}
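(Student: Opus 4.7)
The strategy is to produce, for each cluster variable $x$, an explicit combinatorial expansion in $\O_Y[\x^{\pm 1}]$ whose every term is manifestly a positive monomial in $y_1, \dots, y_j$. Once such a model is in hand, positivity of the Laurent expansion is automatic. First I would reduce to the case of principal coefficients (which is the setting here) by observing that more general coefficient systems are recovered by specialization, and induct on mutation distance from the initial seed so that it suffices to understand an arbitrary single cluster variable expanded in an arbitrary fixed cluster $\x$.

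For the class of quivers most relevant to this paper, namely those arising from triangulations of marked surfaces, I would implement this program via the snake graph construction of Musiker--Schiffler--Williams. Fix an initial triangulation $T$ giving rise to $\tilde{R}$; a non-initial cluster variable $x_\gamma$ corresponds to an arc (or closed loop) $\gamma$ in the surface, and one builds a snake graph $G_{T,\gamma}$ whose tiles record the triangles of $T$ crossed by $\gamma$. The expansion
\[
x_\gamma \;=\; \frac{1}{\operatorname{cross}(T,\gamma)} \sum_{P \in \operatorname{Match}(G_{T,\gamma})} x(P)\, y(P)
\]
writes $x_\gamma$ as a sum indexed by the perfect matchings $P$ of $G_{T,\gamma}$, where $x(P)$ and $y(P)$ are explicit monomials in the initial cluster variables and in $y_1,\dots,y_j$ respectively. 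Because each summand is a positive monomial, collecting like Laurent monomials yields coefficients in $\Z_{\geq 0}[y_1,\dots,y_j]$, proving the theorem in the surface case. I would verify the formula by showing that both sides satisfy the same exchange recurrence under mutation, reducing the proof to a local identity on matchings of a two-tile snake graph.

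For a general (skew-symmetric) quiver not coming from a surface, I would appeal to Lee--Schiffler's rank-two reduction. They first prove positivity for rank-two skew-symmetric cluster algebras using a combinatorial model of "compatible pairs" that enumerates the terms of the greedy basis element associated with a cluster variable, each pair again contributing a positive monomial. They then bootstrap to arbitrary rank by showing that the coefficient of any specified Laurent monomial in the higher-rank expansion coincides with a coefficient computed in a carefully chosen rank-two subalgebra, so positivity propagates upward. The main obstacle in this route is precisely this bootstrap step: one must track how a chosen Laurent monomial in an $n$-variable expansion can be isolated by suitable rank-two data across all mutation-equivalent seeds, and verify that this identification is compatible with the coefficient dynamics in $\{y_1,\dots,y_j\}$. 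As an alternative that avoids this delicate reduction, I would keep in reserve the scattering diagram machinery of Gross--Hacking--Keel--Kontsevich, where positivity follows uniformly from the positivity of wall-crossing automorphisms and the broken-line formula for theta functions, at the cost of substantially heavier technical setup.
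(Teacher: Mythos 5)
This theorem is not proved in the paper at all: it is imported as a black box from the literature, which is why it carries the citations \cite{MSW,leeschiffler} and no proof. So there is nothing internal to compare your argument against; the right question is whether your outline faithfully reflects the cited proofs, and it largely does. The snake-graph/perfect-matching expansion is indeed how Musiker--Schiffler--Williams establish positivity for surface type, and the reduction of a general (skew-symmetric, i.e.\ quiver) cluster algebra to rank-two combinatorics of compatible pairs is the shape of Lee--Schiffler's argument; the preliminary reduction you make to principal coefficients by specialization, and the passage from $\Z[y_1^{\pm1},\dots,y_j^{\pm1}]$ to the localized ground ring $\O_Y$, are both routine and correct, and worth stating since the paper's formulation is for an arbitrary seed $(\x,\tilde R)$ over $\O_Y$ rather than literally for principal coefficients at the initial seed.

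That said, as a proof rather than a provenance summary, your text has genuine gaps exactly where the real work lies. In the surface case, proving the matching formula is not a ``local identity on matchings of a two-tile snake graph''; MSW's proof is an induction on crossings together with a compatibility argument under flips of the triangulation (and the closed-loop/band-graph case needed for loops is a separate, later result). More importantly, the Lee--Schiffler bootstrap from rank two to arbitrary rank --- which you correctly single out as the main obstacle --- is the entire technical core of their paper, and you do not indicate how to carry it out; merely naming it does not constitute a proof. So your proposal should be read as an accurate sketch of the two known routes (with the Gross--Hacking--Keel--Kontsevich scattering-diagram proof \cite{gross} as the uniform alternative), not as a self-contained argument; for the purposes of this paper, citing those works, as the author does, is the intended treatment of Theorem~\ref{thm:Positivity}.
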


In \cite{cluster4}, Fomin and Zelevinsky introduced a special type of coefficients that are called principal coefficients.

\begin{definition}
We say that a cluster algebra $\A_\O(\tilde Q)$ has \hdef{principal coefficients} if there exists some ice quiver $(R,F) \in \mut(\tilde Q)$ such that $(R,F)$ is isomorphic to the framed quiver of the mutable part of $R$. In this case, we denote $\A_\O(\tilde Q)$ as $\A_\O^{\bullet}(R)$.
% In this case, we denote $\A(\x_{t_0},\y_{t_0},B_{t_0}) = \A_{\prin}$.
\end{definition}

In most cases there are infinitely many cluster variables generated by mutation. 
Sometimes it is more convenient to use a matrix to encode the combinatorial data of mutation in place of a quiver. 

\begin{definition}
Let $(Q,F)$ be an ice quiver with mutable vertices $\{1,\dots,n\}$ and frozen vertices $\{n+1,\dots,m\}$. We associate an $m \times n$ matrix $\tilde B{(Q,F)}=(b_{ij})$, where $b_{ij} = 0$ if $i=j$ and otherwise $b_{ij}$ is the number of arrows $j \rightarrow i \in Q_1$ for appropriate $i$ and $j$.  Note that if $j \rightarrow i \in Q_1$ then we adopt the convention that this is a ``negative arrow'' and $b_{ji} < 0$. We call the matrix $\tilde B$ the \hdef{exchange matrix} and the top $n \times n$ matrix the \hdef{principle part} of the matrix and denote it by $B$. Note that by construction the principal part of any exchange matrix will always be skew-symmetric. We sometimes use $(\x, \tilde B(Q,F))$ or just $(\x, \tilde B)$ to denote the seed $(\x , (Q,F)).$
\end{definition}

\subsection{Upper cluster algebras}
\begin{definition}[Upper cluster algebra]
The upper cluster algebra $\U_\O$ of $\A_\O$ is the intersection of a Laurent ring corresponding to each cluster of $\A_\O$. That is $$\U_\O := \bigcap_{t \in \T_n} \O_Y[x_{1;t}^{\pm 1},\dots,x_{n;t}^{\pm 1}]$$
\end{definition}

It follows immediately from Theorem~\ref{thm:Laurent} that every cluster algebra is naturally contained in its upper cluster algebra. As most cluster algebras have infinitely many seeds it is very hard to provide a presentation for an upper cluster algebra. Muller and Matherne \cite{MM} provided an algorithm that will compute the presentation of an upper cluster algebra, but requires the upper cluster algebra to be finitely generated and totally coprime to guarantee it succeeds. 

\begin{definition}
A seed $(\mathbf x, \tilde B)$ or matrix $\tilde B$ is called \hdef{coprime} if every pair of columns of $\tilde B$ are linearly independent. A cluster algebra is \hdef{totally coprime} if every seed is coprime.
\end{definition}

When a cluster algebra is totally coprime we can simplify our definition of the upper cluster algebra. Specifically, we only need to check that a polynomial is Laurent in the initial cluster and those that are exactly one mutation away as opposed to checking every cluster.

\begin{theorem}\cite[Lemma]{cluster3}\label{thm:fullrank}
If the exchange matrix $B_Q$ is totally coprime then $$ \U_\O(Q) =\O_Y[x_1^{\pm 1}, \dots, x_n^{\pm 1}] \cap \left( \bigcap_{i\in \{1,\dots,n\}} \O_Y[x_1^{\pm 1}, \dots,(x_i')^{\pm 1} ,\dots x_n^{\pm 1}]\right)$$ where $x_i'$ is the cluster variable obtained from mutating the initial seed at vertex $i$. 
\end{theorem}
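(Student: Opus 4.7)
The easy inclusion $\U_\O(Q) \subseteq \O_Y[\x^{\pm 1}] \cap \bigcap_i \O_Y[\mu_i(\x)^{\pm 1}]$ is immediate: each of the $n+1$ Laurent rings on the right is one of the rings in the intersection defining $\U_\O$ (Theorem~\ref{thm:Laurent}). The substantive content is the reverse inclusion, and my plan is to reduce it to a single mutation-invariance statement combined with the hypothesis of total coprimality.

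Let $\U^{(1)}_\O(\x,B) := \O_Y[\x^{\pm 1}] \cap \bigcap_{i=1}^n \O_Y[\mu_i(\x)^{\pm 1}]$ denote the right-hand side. I will show that, for any coprime seed $(\x,B)$ and any direction $k$,
\[
\U^{(1)}_\O(\x,B) \;=\; \U^{(1)}_\O(\mu_k(\x),\mu_k(B)).
\]
Once this is established, total coprimality lets us apply the identity at every vertex of $\T_n$: walking along a geodesic from the initial vertex to an arbitrary vertex $t$ shows that every $x \in \U^{(1)}_\O(\x,B)$ lies in $\O_Y[\x_t^{\pm 1}]$, and intersecting over all $t$ yields $x \in \U_\O(Q)$. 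By the involution property of mutation, the invariance reduces to the inclusion $\U^{(1)}_\O(\x,B) \subseteq \U^{(1)}_\O(\mu_k(\x),\mu_k(B))$; the neighbors of $(\mu_k(\x),\mu_k(B))$ in $\T_n$ are $(\x,B)$ together with $\mu_j\mu_k(\x,B)$ for $j \ne k$, so this in turn reduces to the following key lemma.

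\textbf{Key Lemma.} If $(\x,B)$ is a coprime seed and $x \in \O_Y[\x^{\pm 1}] \cap \O_Y[\mu_i(\x)^{\pm 1}] \cap \O_Y[\mu_j(\x)^{\pm 1}]$ for distinct indices $i,j$, then $x \in \O_Y[\mu_j\mu_i(\x)^{\pm 1}]$.

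To prove the Key Lemma, write $x$ in lowest terms with respect to the initial cluster: $x = f/(x_1^{a_1}\cdots x_n^{a_n})$ with $f \in \O_Y[x_1,\ldots,x_n]$ not divisible by any $x_\ell$. Let $P_k = M_k^+ + M_k^-$ denote the numerator of the exchange relation $x_k x_k' = P_k$ at vertex $k$. A direct computation, substituting $x_k = P_k/x_k'$ into the expression for $x$ and demanding the result be Laurent in the new cluster, shows that Laurentness of $x$ in $\mu_k(\x)$ is equivalent to the polynomial divisibility $P_k^{a_k} \mid f$ in $\O_Y[x_1,\ldots,x_n]$. Applied at indices $i$ and $j$, this yields both $P_i^{a_i} \mid f$ and $P_j^{a_j} \mid f$. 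The coprimality of the $i$-th and $j$-th columns of $\tilde B$ forces the binomials $P_i$ and $P_j$ themselves to be coprime in the unique factorization domain $\O_Y[x_1,\ldots,x_n]$, so we may combine the divisibilities to conclude $P_i^{a_i}P_j^{a_j} \mid f$. Performing the double substitution needed to pass to $\mu_j\mu_i(\x)$ and tracking the $P_i^{a_i}$ and $P_j^{a_j}$ factors that appear in the denominators shows the result is a Laurent polynomial in $\mu_j\mu_i(\x)$, completing the lemma.

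The main obstacle I expect is the coprimality step: proving that coprime columns of $\tilde B$ produce coprime exchange binomials $P_i, P_j$ requires a careful monomial analysis in $\O_Y[x_1,\ldots,x_n]$, and it is the single point where the hypothesis on $B_Q$ is actually used. Once that algebraic lemma is in hand the rest of the argument is essentially formal bookkeeping of exponents, followed by the tree-induction outlined above.
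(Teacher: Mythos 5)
Your overall architecture --- reduce the theorem to a one-step mutation-invariance of the bound $\U^{(1)}_\O$, walk along $\T_n$ using total coprimality (which is needed so that the mutated seed is again coprime when you invoke the involution), and isolate a key lemma in which coprimality of two exchange binomials is the only place the hypothesis enters --- is exactly the route of the cited source \cite{cluster3}; the paper itself offers no proof beyond that citation. The genuine gap is in your proof of the Key Lemma: the claimed equivalence ``$x$ is Laurent in $\mu_k(\x)$ iff $P_k^{a_k}\mid f$'' (for $x=f/(x_1^{a_1}\cdots x_n^{a_n})$ in lowest terms) is false in the direction you use. Take $x=1+x_k'=(x_k+P_k)/x_k$: this lies in $\O_Y[\x^{\pm 1}]\cap\O_Y[\mu_k(\x)^{\pm 1}]$, the fraction is in lowest terms with $a_k=1$, yet $P_k\nmid(x_k+P_k)$ because the exchange binomial $P_k$ is not a monomial. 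The correct criterion is graded in the $k$-th variable: writing $x=\sum_r c_r x_k^r$ with $c_r\in\O_Y[x_\ell^{\pm 1}:\ell\neq k]$, Laurentness in $\mu_k(\x)$ is equivalent to $P_k^{-r}\mid c_r$ for every $r<0$, with no condition on the nonnegative powers. Consequently your step ``Laurent at $i$ and at $j$ gives $P_i^{a_i}\mid f$ and $P_j^{a_j}\mid f$, hence $P_i^{a_i}P_j^{a_j}\mid f$'' does not follow.

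Even with the corrected coefficientwise criterion, the part you dismiss as ``formal bookkeeping of exponents'' is where the real content sits. The cluster $\mu_j\mu_i(\x)$ contains $x_j''$, the exchange partner of $x_j$ in the seed $\mu_i(\x,B)$, and its exchange polynomial differs from $P_j$ whenever $b_{ij}\neq 0$: rewritten after the substitution $x_i=P_i/x_i'$ it mixes $P_i$ into the picture. Showing that the divisibilities by powers of $P_i$ and $P_j$ on the negative-degree coefficients, split using the coprimality of $P_i$ and $P_j$, yield divisibility by the appropriate powers of the \emph{new} exchange polynomial after the double substitution is precisely the computation carried out in \cite{cluster3}; your sketch never engages with it. The pieces of your outline that are sound are the trivial inclusion, the reduction to one-step invariance plus the tree walk, and the observation that linearly independent columns force $P_i$ and $P_j$ to be coprime in the UFD $\O_Y[x_1,\dots,x_n]$ --- but the divisibility equivalence must be replaced and the double-mutation step actually proved before the Key Lemma, and hence the theorem, is established.
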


\subsection{Locally-acyclic cluster algebras}\label{sec:laca}

We now introduce Muller's locally-acyclic cluster algebras.

\subsubsection{Cluster localizations and covers}

\begin{definition}
Let $(\x,\tilde Q) = ((x_1,\dots,x_n),(Q,F))$ be a seed with $F = \{1^*,\dots,j^*\}$. A \hdef{freezing} of a seed at the cluster variables $x_{i_1}, \dots x_{i_k}$ is the new seed $(\x^\dagger,\tilde Q^\dagger)$ where $\x^\dagger = (x_1,\dots, x_{i_1-1},x_{i_1+1},\dots, x_{i_2-1},x_{i_2+1},\dots,x_n)$ is $\x$ with each of the $i_k$-th entries removed and $\tilde Q^\dagger = (Q,F\cup \{{i_1},\dots,{i_k}\})$. For an open set $\bigcap_{f \in \Lambda} D_f = \O \in \spec \Z[y_1,\dots,y_j]$ we define $\O^\dagger = \bigcap_{f \in \Lambda'} D_{f} \subset \spec \Z[y_1,\dots,y_j,x_{i_1},\dots,x_{i_k}]$ where $\Lambda' = \{x_{i_1},\dots, x_{i_k}\} \cup \Lambda$ and $f \in \Lambda$ is identified with its natural image in $ \Z[y_1,\dots,y_j,x_{i_1},\dots,x_{i_k}].$
\end{definition}

Note there is a natural inclusion of rings $\O_Y \hookrightarrow \O^\dagger_Y$ and furthermore 
$\O_Y^\dagger \cong \O_Y[x_{i_1}^{\pm 1},x_{i_2}^{\pm 1},\dots, x_{i_k}^{\pm 1}]$ as rings. The following lemma of Muller was proven in a slightly different setting, but the proof carries over to our setting with little work. 

\begin{lemma}\cite[Lemma 1]{muller}\label{lem:cluster_localization}
Let $\{x_{i_1},x_{i_2},\dots,x_{i_k}\} \subset \x$ be a set of cluster variables in a seed $(\x,\tilde Q),$ with freezing $(\x^\dagger, \tilde Q ^ \dagger)$. Let $\A_\O$ and $\U_\O$ be the cluster algebra and the upper cluster algebra of $(\x,\tilde Q)$ and let $\A_\O^\dagger$ and $U_\O^\dagger$ be the cluster algebra and upper cluster algebra of $(\x^\dagger, \tilde Q^\dagger)$ generated over the ground ring $\O_Y^\dagger$. Then there are inclusions in $\F$ $$\A_\O^\dagger \subseteq \A_\O[(x_{i_1}x_{i_2}\dots x_{i_k})^{-1}] \subset \U_\O[(x_{i_1}x_{i_2}\dots x_{i_k})^{-1}] \subset \U_\O^\dagger.$$
\end{lemma}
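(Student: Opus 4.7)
The plan is to prove the three inclusions separately, leveraging a natural correspondence between clusters of $\A_\O^\dagger$ and a distinguished subfamily of clusters of $\A_\O$. The central inclusion $\A_\O[(x_{i_1}\cdots x_{i_k})^{-1}] \subseteq \U_\O[(x_{i_1}\cdots x_{i_k})^{-1}]$ follows from the Laurent phenomenon (Theorem~\ref{thm:Laurent}) by localizing both sides of $\A_\O \subseteq \U_\O$ at the multiplicative set generated by $x_{i_1}\cdots x_{i_k}$. The structural fact powering the outer two inclusions is that mutation at a non-frozen vertex $j$ yields literally the same exchange relation in $(\x,\tilde Q)$ and in $(\x^\dagger,\tilde Q^\dagger)$: freezing only reclassifies $x_{i_1},\dots,x_{i_k}$ from cluster variables into invertible generators of $\O_Y^\dagger \cong \O_Y[x_{i_1}^{\pm 1},\dots,x_{i_k}^{\pm 1}]$, while the numerator monomials for $x_j'$ are identical elements of $\F$ in both settings. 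An induction on the length of a mutation sequence (which is forced to avoid $i_1,\dots,i_k$ in the $\dagger$-setting) then shows that each cluster $\x^\dagger_{t'}$ of $\A_\O^\dagger$ equals $\x_t \setminus \{x_{i_1},\dots,x_{i_k}\}$ for a corresponding cluster $\x_t$ of $\A_\O$.

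Granted this correspondence, the remaining two inclusions follow by unwinding definitions. For $\A_\O^\dagger \subseteq \A_\O[(x_{i_1}\cdots x_{i_k})^{-1}]$, the cluster-variable generators of $\A_\O^\dagger$ all lie in $\bigcup_{t'}\x^\dagger_{t'} \subseteq \bigcup_t \x_t \subseteq \A_\O$, and the extended ground ring $\O_Y^\dagger$ sits inside $\A_\O[(x_{i_1}\cdots x_{i_k})^{-1}]$ because each $x_{i_j}$ is already an initial cluster variable of $\A_\O$. For $\U_\O[(x_{i_1}\cdots x_{i_k})^{-1}] \subseteq \U_\O^\dagger$, given $f \in \U_\O$ and a cluster $\x^\dagger_{t'} \subseteq \x_t$, the Laurent expansion of $f$ over $\O_Y$ in the $n$ variables of $\x_t$ rewrites as a Laurent expansion of $f$ over $\O_Y^\dagger$ in the $n-k$ variables of $\x^\dagger_{t'}$. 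This property persists under multiplication by units of $\O_Y^\dagger$ such as powers of $(x_{i_1}\cdots x_{i_k})^{-1}$, and holds for every cluster of $\A_\O^\dagger$, so every element of $\U_\O[(x_{i_1}\cdots x_{i_k})^{-1}]$ lies in $\U_\O^\dagger$.

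The main obstacle is making the ``same exchange relation'' claim precise with the correct bookkeeping: in the $\dagger$-mutation at a non-frozen vertex $j$, the product over frozen neighbors now ranges over $\{1^*,\dots,j^*,i_1,\dots,i_k\}$ rather than just $\{1^*,\dots,j^*\}$, and one must verify that the contributions of the newly-frozen $x_{i_\ell}$ in the $\dagger$-formula match exactly the mutable-vertex contributions in the ordinary mutation of $(\x,\tilde Q)$ at $j$. This is routine bookkeeping, but it is the only place where the explicit form of the mutation formula from Section~\ref{sec:cluster_algebra} enters; once settled, the remainder of the proof is purely formal ring theory.
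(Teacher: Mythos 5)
Your proposal is correct, and it is essentially the argument the paper relies on: the paper does not reprove this lemma but cites Muller's Lemma 1, whose proof proceeds exactly as you describe (identical exchange relations at mutable vertices identify each cluster of $\A_\O^\dagger$ with a cluster of $\A_\O$ minus the frozen variables, the middle inclusion is the Laurent phenomenon localized, and the outer inclusions follow by tracking generators and rewriting Laurent expansions over $\O_Y^\dagger$). Your flagged bookkeeping point is indeed the only content beyond formal ring theory, and it goes through because arrows between frozen vertices never influence mutation at mutable vertices.
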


\begin{definition}
If $(\x^\dagger, \tilde Q^\dagger)$ is a freezing of $(\x,\tilde Q)$ at $\{x_{i_1},x_{i_2},\dots,x_{i_k}\}$ such that $$\A_\O^\dagger = \A_\O[(x_{i_1}x_{i_2}\dots x_{i_k})^{-1}] ,$$ then we say that $\A_\O^\dagger$ is a \hdef{cluster localization} of $\A_\O.$
\end{definition}

Note that if $\A_\O^\dagger = \U_\O^\dagger$  then it follows immediately from Lemma~\ref{lem:cluster_localization} that the freezing is a cluster localization. Also, cluster localizations are transitive. That is if $\A^\dagger$ is a cluster localization of $\A$, and $\A^\ddagger$ is a cluster localization of $\A^\dagger$, then $\A^\ddagger$ is a cluster localization of $\A$. 

\begin{definition}
For a cluster algebra $\A_\O$, a set of cluster localizations $\{ \A_i\}$ of $\A_\O$ is called a \hdef{cover} if, for every proper prime ideal $P \subsetneq \A_\O$ there exists some $\A_i$ in the set such that $\A_iP \subsetneq \A_i$. 
\end{definition}

Like cluster localizations covers are transitive. If $\{\A_i\}$ is a cover of $\A_\O$, and $\{\A_{ij}\}$ is a cover of $\A_i$, then $\bigcup \{ \A_{ij}\}$ is a cover of $\A$. However, there does not exist a notion of a common refinement of two covers for a cluster algebra. 

%An important property of covers is related to the containment $\A \subset \U$. 
%
%\begin{proposition}\cite{muller}
%If $\{\A_i\}$ is a cover for $\A$ and $\A_i = \U_i$ for all $i$ then $\A = \U.$
%\end{proposition}

The main argument used in this paper to construct covers is summarized in the following lemma. It is the idea behind the Banff algorithm from \cite{muller} that was introduced to explicitly construct covers. We say that two cluster variables $x_1$ and $x_2$ are \hdef{coprime} if the ideal generated by $x_1$ and $x_2$ is the entire cluster algebra $\A_\O.$

\begin{lemma}\label{lem:coprimecover}
If $x_1$ and $x_2$ are coprime cluster variables and the freezings $\A_1 = \A_\O[x_1^{- 1}]$ and $\A_2 = \A_\O[x_2^{- 1}]$ are cluster localizations then the set $\{\A_1, \A_2\}$ is a cover for $\A_\O$. 
\end{lemma}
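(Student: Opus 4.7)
The plan is to unpack the definition of cover and reduce the statement to a classical fact about principal localizations in commutative algebra.

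First I would show that for every proper prime ideal $P \subsetneq \A_\O$, at least one of $x_1, x_2$ lies outside $P$. Since $x_1$ and $x_2$ are coprime, by definition the ideal $(x_1, x_2)$ they generate equals $\A_\O$. If both $x_1$ and $x_2$ were contained in $P$, then $\A_\O = (x_1, x_2) \subseteq P$, contradicting the assumption that $P$ is proper. So without loss of generality assume $x_1 \notin P$.

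Next I would argue that $\A_1 P \subsetneq \A_1$. Suppose for contradiction that $\A_1 P = \A_1$. Then $1 \in \A_1 P$, so we can write
$$1 = \sum_{i=1}^{m} \frac{a_i}{x_1^{n_i}} \, p_i$$
with $a_i \in \A_\O$ and $p_i \in P$. Clearing denominators by multiplying through by a sufficiently large power of $x_1$, we obtain an equation of the form
$$x_1^N = \sum_{i=1}^{m} b_i \, p_i$$
inside $\A_\O$, where $b_i \in \A_\O$. Thus $x_1^N \in P$. Because $P$ is prime, this forces $x_1 \in P$, contradicting our choice of $x_1$. Hence $\A_1 P \subsetneq \A_1$, and the set $\{\A_1, \A_2\}$ satisfies the defining condition of a cover.

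The hypothesis that each $\A_i$ is a \emph{cluster localization} (as opposed to an arbitrary freezing) is not actually used in this particular argument about the prime spectrum; it only ensures that the $\A_i$ are themselves cluster algebras, which is what makes the collection qualify as a cover in the sense of locally-acyclic cluster algebras. So the main content of the lemma is really the Spec-theoretic statement above, and the only potential subtlety is verifying that the ambient commutative-algebra manipulation (multiplying by $x_1^N$ to land back in $\A_\O$) is legitimate; this is immediate from the definition of the localization $\A_\O[x_1^{-1}]$. There is no genuine obstacle; the proof is essentially the standard argument that principal opens $D(f_1), D(f_2)$ cover $\operatorname{Spec} A$ whenever $(f_1, f_2) = A$.
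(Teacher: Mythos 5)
Your proof is correct and follows essentially the same approach as the paper, whose entire proof is the one-line observation that a proper prime ideal of $\A_\O$ cannot contain both of the coprime variables $x_1$ and $x_2$. You simply make explicit the standard localization step (clearing denominators to get $x_1^N \in P$ and invoking primality) that the paper leaves implicit, and your remark about the role of the cluster-localization hypothesis matches the paper's definition of a cover.
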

\begin{proof}
Any proper prime ideal of $\A_\O$ cannot contain both $x_1$ and $x_2$. 
\end{proof}
\subsubsection{Locally-acyclic cluster algebras}

\begin{definition}{\cite{muller}}
A cluster algebra is said to be locally-acyclic if it admits a cover by acyclic cluster algebras. 
\end{definition}

As we mentioned before the notion of locally-acyclic cluster algebras generalizes the notion of acyclic cluster algebras so naturally acyclic cluster algebras should be locally-acyclic. 

\begin{theorem}\cite{muller}\label{thm:acyclic_LA}
If a cluster algebra is acyclic then it is locally-acyclic. 
\end{theorem}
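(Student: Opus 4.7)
The plan is to observe that the statement is essentially immediate from the definitions, via the trivial cover consisting of $\A_\O$ itself. Concretely, I would proceed as follows.

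First, I would verify that $\A_\O$ is a cluster localization of itself. The definition of freezing allows freezing at any subset of the cluster variables, including the empty subset. Freezing at the empty set produces the same seed $(\x,\tilde{Q})$ back, with the same ground ring $\O_Y$, so $\A_\O^\dagger = \A_\O$. The equality $\A_\O^\dagger = \A_\O[\emptyset^{-1}] = \A_\O$ then trivially satisfies the condition in the definition of a cluster localization.

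Next, I would verify that the one-element set $\{\A_\O\}$ is a cover. By definition this requires that for every proper prime ideal $P \subsetneq \A_\O$, there is some member of the cover, namely $\A_\O$ itself, such that $\A_\O \cdot P \subsetneq \A_\O$. But $\A_\O \cdot P = P$, and $P \subsetneq \A_\O$ by assumption, so this holds for every proper prime ideal with no further work.

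Finally, since we assumed $\A_\O$ is an acyclic cluster algebra, the cover $\{\A_\O\}$ is a cover by acyclic cluster algebras, which is precisely the definition of locally-acyclic. This finishes the argument. There is no real obstacle here — the statement is a formal consequence of the definitions, and its main purpose is to record that the class of locally-acyclic cluster algebras strictly contains the class of acyclic cluster algebras, justifying the terminology.
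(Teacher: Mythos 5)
Your argument is correct: the paper itself only cites this result from Muller without proof, and your trivial-cover argument (the empty freezing realizes $\A_\O$ as a cluster localization of itself, and $\{\A_\O\}$ satisfies the cover condition since $\A_\O P = P \subsetneq \A_\O$ for every proper prime $P$) is exactly the one-line proof given in Muller's original work. Nothing further is needed.
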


The main result we would like to use in this work is that for locally-acyclic cluster algebras we have $\A_\O =\U_\O$. 

\begin{theorem}\cite[Theorem 2]{MullerAU}\label{thm:LA_AU}
If $\mathcal{A}_\O$ is locally-acyclic then $\mathcal{A}_\O=\mathcal{U}_\O$. 
\end{theorem}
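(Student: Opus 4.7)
The plan is to prove $\U_\O \subseteq \A_\O$; the reverse containment is immediate from Theorem~\ref{thm:Laurent}. First I would unpack what local acyclicity gives: a cover $\{\A_i\}$ of $\A_\O$ where each $\A_i = \A_\O[X_i^{-1}]$ is a cluster localization at a set $X_i$ of cluster variables, and each $\A_i$ is acyclic. Since acyclic cluster algebras coincide with their upper cluster algebras (a result of \cite{cluster3,MullerAU} recalled in the introduction), $\A_i = \U_i$ for every $i$.

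Next I would combine this with Lemma~\ref{lem:cluster_localization} applied to each freezing. That lemma gives the chain
$$\A_\O[X_i^{-1}] \;\subseteq\; \U_\O[X_i^{-1}] \;\subseteq\; \U_i.$$
Since the outer terms are equal (by the acyclicity step), we conclude $\U_\O[X_i^{-1}] = \A_\O[X_i^{-1}]$ for every $i$ in the cover. In particular, given any $u \in \U_\O$, for each $i$ there exist $n_i \geq 0$ and $a_i \in \A_\O$ such that $f_i^{n_i}\, u = a_i$ in $\A_\O$, where $f_i := \prod_{x \in X_i} x$.

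The third step is to translate the cover condition into a statement about ideals of $\A_\O$. The cover condition says that for every proper prime ideal $P \subsetneq \A_\O$, some $\A_i$ has $\A_i P \subsetneq \A_i$; for the localization $\A_i = \A_\O[f_i^{-1}]$, this is equivalent to $f_i \notin P$. Hence no proper prime ideal contains all of the $f_i$, so the ideal generated by $\{f_i\}$ in $\A_\O$ is the unit ideal. After replacing each $f_i$ by $f_i^{n_i}$ (which keeps the ideal equal to $\A_\O$ by a pigeonhole-on-monomials argument), we may choose $c_i \in \A_\O$ with $\sum_i c_i f_i^{n_i} = 1$. Multiplying by $u$ and using the relations above gives
$$u \;=\; \sum_i c_i\, u\, f_i^{n_i} \;=\; \sum_i c_i a_i \;\in\; \A_\O,$$
which is the sheaf-gluing step that completes the inclusion $\U_\O \subseteq \A_\O$.

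The main obstacle I expect is the translation in step three: the cover condition is phrased in terms of the behavior of prime ideals under extension to each localization, and one must check carefully that this is equivalent to the algebraic statement that the $f_i$ generate the unit ideal in $\A_\O$. A secondary point is quasi-compactness: one needs a \emph{finite} relation $\sum c_i f_i^{n_i} = 1$, which is automatic when the cover is finite (the case for Banff-style locally acyclic covers), and otherwise follows from quasi-compactness of $\spec \A_\O$ by passing to a finite subcover before invoking the unit ideal.
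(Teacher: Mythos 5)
Your argument is sound, and the first thing to note is that the paper does not prove this statement at all: Theorem~\ref{thm:LA_AU} is quoted verbatim from \cite{MullerAU}, so there is no in-paper proof to compare against. What you have written is essentially a correct reconstruction of Muller's localization-and-gluing argument. All three of your steps check out: (i) since each piece of the cover is by definition a cluster localization, $\A_i=\A_\O[f_i^{-1}]$, and Lemma~\ref{lem:cluster_localization} sandwiches $\U_\O[f_i^{-1}]$ between $\A_\O[f_i^{-1}]$ and $\U_i$, so the acyclic case $\A_i=\U_i$ forces $\U_\O\subseteq\A_\O[f_i^{-1}]$; (ii) your translation of the cover condition is correct, because for a prime $P$ one has $\A_\O[f_i^{-1}]P=\A_\O[f_i^{-1}]$ exactly when some power of $f_i$, hence $f_i$ itself, lies in $P$, so no proper prime contains every $f_i$ and the $f_i$ generate the unit ideal (any expression of $1$ is automatically a finite sum, which also disposes of the quasi-compactness worry); and (iii) the passage from $(f_i)=(1)$ to $(f_i^{n_i})=(1)$ and the computation $u=\sum_i c_if_i^{n_i}u=\sum_i c_ia_i$ is the standard fact that a domain (here a subring of $\F$) is the intersection of its localizations at elements generating the unit ideal. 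The one caveat concerns your base case: ``acyclic $\Rightarrow\A=\U$'' over an arbitrary ground ring and without any coprimality hypothesis is itself due to \cite{MullerAU} (the version in \cite{cluster3} requires a coprime seed), so as written your proof is a reduction of the locally acyclic case to the general acyclic case rather than a from-scratch proof; this is consistent with how the paper itself treats the acyclic case as known, and in the paper's actual applications (principal coefficients, where the extended exchange matrix has full column rank, full rank survives freezing, and hence every seed is coprime) the \cite{cluster3} version already suffices, so no circularity arises there.
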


Provided that we know that the freezings of a cluster algebra $\A$ are locally-acyclic we can show that $\A$ is locally-acyclic.

\begin{lemma}\label{lem:short_LA}
If there exists a seed $(\x,\tilde{Q})$ in a cluster algebra $\A_\O$ with coprime cluster variables $x_i,x_j \in \x$ such that the cluster algebra $\A^\dagger$ obtained from the freezing at $x_i$ and the cluster algebra $\A^\ddagger$ obtained from the freezing at $x_j$ are both locally-acyclic then $\A_\O$ is locally-acyclic. 
\end{lemma}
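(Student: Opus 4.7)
The plan is to build an acyclic cover of $\A_\O$ by gluing together acyclic covers of the two freezings, using coprimality of $x_i$ and $x_j$ to ensure that the freezings themselves cover $\A_\O$.

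First I would verify that each of the freezings is actually a cluster localization, not merely a freezing. Since $\A^\dagger$ is locally-acyclic, Theorem~\ref{thm:LA_AU} gives $\A^\dagger = \U^\dagger$. Applying Lemma~\ref{lem:cluster_localization} together with the remark immediately following it, this equality forces the freezing at $x_i$ to equal $\A_\O[x_i^{-1}]$, so $\A^\dagger$ is a cluster localization of $\A_\O$. The identical argument with $x_j$ in place of $x_i$ shows $\A^\ddagger$ is a cluster localization of $\A_\O$.

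Next, by the hypothesis that $x_i$ and $x_j$ are coprime, Lemma~\ref{lem:coprimecover} immediately gives that $\{\A^\dagger, \A^\ddagger\}$ is a cover of $\A_\O$. Because each of $\A^\dagger$ and $\A^\ddagger$ is locally-acyclic, each admits its own cover by acyclic cluster algebras: say $\{\A^\dagger_k\}$ covers $\A^\dagger$ and $\{\A^\ddagger_l\}$ covers $\A^\ddagger$. Using the transitivity of covers noted in the text, the union $\{\A^\dagger_k\} \cup \{\A^\ddagger_l\}$ is a cover of $\A_\O$ by acyclic cluster algebras, and therefore $\A_\O$ is locally-acyclic.

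There is no real obstacle here beyond assembling the pieces in the right order; the only subtlety worth flagging is that the freezing at a single cluster variable is a priori only a containment $\A^\dagger \subseteq \A_\O[x_i^{-1}]$ from Lemma~\ref{lem:cluster_localization}, and it is precisely the hypothesis of local acyclicity (converted via $\A = \U$) that upgrades this to the equality needed to call the freezing a cluster localization. Once that bridge is in place, Lemma~\ref{lem:coprimecover} and the transitivity of covers handle the rest.
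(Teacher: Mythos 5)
Your proposal is correct and follows essentially the same route as the paper's own proof: use Theorem~\ref{thm:LA_AU} to get $\A^\dagger=\U^\dagger$ and hence (via Lemma~\ref{lem:cluster_localization} and the remark following it) that each freezing is a cluster localization, apply Lemma~\ref{lem:coprimecover} to the coprime pair to obtain a cover $\{\A^\dagger,\A^\ddagger\}$, and then use transitivity of covers to assemble the acyclic covers of the freezings into an acyclic cover of $\A_\O$. The only difference is that you spell out the ``equality forces the localization'' step slightly more explicitly than the paper does, which is a welcome clarification rather than a deviation.
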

\begin{proof}
If $\A^\dagger$ is locally-acyclic we have that $\A_\O^\dagger = \U^\dagger$ by Theorem~\ref{thm:LA_AU}. Therefore it is a cluster localization by Lemma~\ref{lem:cluster_localization}. Similarly, $\A_\O^\ddagger$ is a cluster localization. By Lemma~\ref{lem:coprimecover} we have that $\A^\dagger_\O$ and $\A^\ddagger_\O$ form a cover for $\A_\O$. 

Now $\A_\O$ is covered by the locally-acyclic cluster algebras. Then there exists acyclic covers $\{\A^\dagger_{j}\}$ and $\{\A^\ddagger_{k}\}$ for $A^\dagger_\O$ and $A_\O^\ddagger$, respectively. Using the transitivity of covers we know that the union $\{\A^\dagger_{i}\} \cup \{\A^\ddagger_{j}\}$ is an acyclic cover of $\A_\O$. Therefore $\A_\O$ is locally-acyclic. 
\end{proof}

Muller's initial work \cite{muller} found a sufficient condition on the quiver of a seed to determine if two cluster variables are coprime for any choice of ground ring. A \hdef{bi-infinite path} in an ice quiver $(Q,F)$ is a sequence of mutable vertices $\{i_k\}_{k \in \Z}$ such that $i_k \rightarrow i_{k+1} \in \tilde Q_1$ for all $k \in \Z.$ An arrow $i \rightarrow
j$ is in a bi-infinite path $\{i_k\}_{k\in Z}$ if there exists a $k^* \in \Z$ such that $i=i_{k^*}$ and $j = i_{k^*+1}.$ Note that an arrow does not need to be a part of a cycle for it to be in a bi-infinite path. 

\begin{lemma}\cite[Proposition 5.1]{muller} 
An arrow $e$ is in a bi-infinite path if and only if there is a cycle upstream from $e$ and a cycle downstream from $e$. 
\end{lemma}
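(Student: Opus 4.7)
The plan is to prove the two implications separately, using two complementary finiteness principles: in the forward direction that a bi-infinite walk in a finite quiver must revisit vertices (pigeonhole), and in the reverse direction that a cycle provides an inexhaustible supply of arrows that can be traversed indefinitely.

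For the forward direction, suppose $e = i_{k^*} \to i_{k^*+1}$ lies on a bi-infinite path $\{i_k\}_{k \in \Z}$, with source $i = i_{k^*}$ and target $j = i_{k^*+1}$. Consider the backward half-sequence $i_{k^*}, i_{k^*-1}, i_{k^*-2}, \dots$. Since $Q_0$ is finite, pigeonhole gives indices $b < a \leq k^*$ with $i_a = i_b$, and the arrows $i_b \to i_{b+1} \to \dots \to i_a = i_b$ then form a directed cycle $C_{\mathrm{up}}$. The subpath $i_a \to i_{a+1} \to \dots \to i_{k^*} = i$ connects this cycle to the source of $e$, exhibiting the required upstream cycle. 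The mirror argument applied to the forward half-sequence $i_{k^*+1}, i_{k^*+2}, \dots$ produces a cycle $C_{\mathrm{down}}$ downstream from $e$.

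For the reverse direction, assume there is an upstream cycle $C_{\mathrm{up}}$ witnessed by a vertex $u \in C_{\mathrm{up}}$ and a directed path $u = p_0 \to p_1 \to \dots \to p_r = i$, and a downstream cycle $C_{\mathrm{down}}$ witnessed by a vertex $v \in C_{\mathrm{down}}$ and a directed path $j = q_0 \to q_1 \to \dots \to q_s = v$. Enumerate $C_{\mathrm{up}}$ cyclically as $u = c_0 \to c_1 \to \dots \to c_{\ell - 1} \to c_0$ and $C_{\mathrm{down}}$ as $v = d_0 \to d_1 \to \dots \to d_{m-1} \to d_0$. Build $\{i_k\}_{k \in \Z}$ in three blocks: in the middle window, traverse $p_0 \to \dots \to p_r = i$, then the arrow $e$, then $j = q_0 \to \dots \to q_s = v$; to the right of this window, iterate $C_{\mathrm{down}}$ starting from $v$ forever by setting $i_{s+1+t} = d_{t \bmod m}$; to the left, iterate $C_{\mathrm{up}}$ ending at $u$ forever by setting $i_{-r-1-t} = c_{(-1-t) \bmod \ell}$. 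Choosing the index shift so that $i_0 = i$ and $i_1 = j$ gives a bi-infinite path containing $e$.

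The only real obstacle is bookkeeping in the reverse direction: $e$ itself need not lie on any cycle, the cycles $C_{\mathrm{up}}$ and $C_{\mathrm{down}}$ may share vertices with each other or with the connecting paths, and one must ensure the attachment point to each cycle is itself a vertex of the cycle so that the unbounded iteration is a genuine directed walk. Beyond this careful indexing, both implications are elementary consequences of the finiteness of $Q_0$ and the combinatorial definitions of \emph{upstream} and \emph{downstream}.
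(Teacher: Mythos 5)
The paper does not prove this statement itself; it is quoted from Muller's work (Proposition 5.1 of \cite{muller}), and your argument is correct and is essentially the standard one used there: pigeonhole on the finitely many vertices forces the backward and forward halves of a bi-infinite path to revisit a vertex (the resulting closed walk contains a directed cycle, from any vertex of which there is a directed walk to the tail of $e$, and dually for the head), while conversely the upstream cycle, connecting paths, $e$, and downstream cycle can be concatenated and iterated to build the bi-infinite sequence, since the definition allows repeated vertices. No gap to report.
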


Now we say that a pair of cluster variables $x_i$ and $x_j$ are a \hdef{covering pair} for a cluster algebra $\A_Q$ if there exists a seed $(\x,\tilde Q)$ such that $x_i, x_j \in \x$ and there exists an arrow $i \rightarrow j \in \tilde{Q}_1$ that is not contained in any bi-infinite path of $\tilde Q$. 
\begin{lemma} \cite[Lemma 5.3]{muller}\label{lem:cover_is_coprime} The cluster variables of a covering pair are coprime. 
\end{lemma}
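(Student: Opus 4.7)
Plan:

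To prove that $x_i$ and $x_j$ are coprime, I will show that $(x_i, x_j) = \A_\O$ by exhibiting a unit of $\O_Y$ that lies in this ideal. By the preceding lemma, since the arrow $i \to j$ lies in no bi-infinite path of $\tilde Q$, we have either no directed cycle upstream from $i \to j$ or none downstream; by a symmetric argument it suffices to treat the second case. Let
$$D \;=\; \{\,j\,\} \;\cup\; \{\,v \in Q_0 \mid \text{there is a directed path in } Q \text{ from } j \text{ to } v\,\}.$$
The hypothesis of no downstream cycle forces the full subquiver of $Q$ on $D$ to be acyclic.

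The plan is to perform a sequence of mutations at vertices of $D \setminus \{j\}$, in a topological ordering of this acyclic subquiver, so that in the resulting seed $(\x', \tilde Q')$ the vertex $j$ becomes a sink of the mutable part while the arrow $i \to j$ persists. Throughout these mutations the cluster variables $x_i$ and $x_j$ are unchanged because we mutate only at vertices different from $i$ and $j$. The combinatorial step requires verifying at each stage that (a) no new cycle arises in $D$ obstructing the topological sweep, and (b) the arrow $i \to j$ is not destroyed. For (b) note that mutation at a vertex $k$ can affect the net multiplicity of arrows between $i$ and $j$ only through a 2-path $i \to k \to j$ or $j \to k \to i$. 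For $k \in D\setminus\{j\}$ the first would force $k \to j$ at the time of mutation, giving a cycle in $D$; the second would combine with the arrow $i \to j$ to yield a cycle downstream from $i \to j$, contradicting the choice of $D$. Hence the multiplicity of $i \to j$ is weakly monotone throughout the sequence, so $i \to j$ remains an arrow of $\tilde Q'$.

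With $j$ a sink of the mutable part of $\tilde Q'$, the exchange relation at $j$ in the new seed takes the form
$$x_j \, x_j^{*} \;=\; \left(\prod_{j \to k^* \in \tilde Q'_1} y_k\right) \;+\; \left(\prod_{k^* \to j \in \tilde Q'_1} y_k\right)\prod_{k \to j \in \tilde Q'_1} x_k,$$
where the empty product $\prod_{j \to \ell}x_\ell = 1$ reflects the sink condition, and each $y_k \in \O_Y^{\times}$ is a unit (since $\O_Y \supseteq \Z[y_1^{\pm 1},\dots,y_n^{\pm 1}]$). Because $i \to j$ survives in $\tilde Q'$, the variable $x_i$ divides the second summand on the right. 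Therefore
$$\prod_{j \to k^*} y_k \;=\; x_j \, x_j^{*} \;-\; \left(\prod_{k^* \to j} y_k\right)\prod_{k \to j} x_k \;\in\; (x_i, x_j),$$
and since the left-hand side is a unit of $\A_\O$, we conclude $(x_i, x_j) = \A_\O$, which is the required coprimality.

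The main obstacle is the combinatorial step: constructing an explicit topological mutation schedule for $D \setminus \{j\}$ and checking inductively that at each intermediate quiver, mutation at the next vertex neither introduces a forbidden 2-path of type $i \to k \to j$ or $j \to k \to i$, nor creates a directed cycle within the evolving subquiver on $D$. Once this bookkeeping is complete, the algebraic conclusion reduces to reading off a single exchange relation at the sink $j$.
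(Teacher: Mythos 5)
The paper does not actually prove this lemma---it is quoted from Muller \cite[Lemma 5.3]{muller}---so your argument has to stand on its own, and it has a genuine gap exactly where you flag ``the main obstacle'': the combinatorial step is not only left open, but the schedule you propose (sweeping $D\setminus\{j\}$ in a topological ordering) fails in general, and your justifications of the invariants (a) and (b) appeal to paths and cycles of the \emph{original} quiver even though the quiver changes as you mutate. Concretely, take $Q$ with arrows $i\to j$, $j\to k$, $j\to l$, $k\to l$, so $D\setminus\{j\}=\{k,l\}$ with topological order $k,l$. Mutating at $k$ and then at $l$ yields the arrows $i\to j$, two arrows $l\to j$, $j\to k$, and $k\to l$: the vertex $j$ is \emph{not} a sink, and the oriented cycle $j\to k\to l\to j$ now lies downstream of $i\to j$. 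So acyclicity of the downstream part is not preserved under arbitrary mutations at $D\setminus\{j\}$, the source-first sweep need not end with $j$ a sink, and the monotonicity claim for the arrow $i\to j$ is not established by the reasoning you give.

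The repair is the standard one, and it is essentially Muller's proof: do not fix the order in advance; instead repeatedly mutate at a \emph{sink} of the current downstream part. Because $D$ is closed under successors, a sink of the induced subquiver on the downstream set is a sink of the whole mutable quiver, and mutation at a sink only reverses arrows and creates none; hence it cannot affect the arrow $i\to j$, cannot create cycles, and removes that vertex from the set of vertices reachable from $j$. Induction on the size of the downstream set then makes $j$ a sink of the mutable part after finitely many such mutations (in the example above: mutate at $l$, then at $k$). From there your algebraic conclusion is correct and is the same as the standard one: with $j$ a sink, the exchange relation at $j$ shows that the coefficient monomial $\prod_{j\to k^{*}} y_k$ (a unit of $\O_Y$, possibly the empty product $1$) lies in $(x_i,x_j)$, since $x_i$ divides the other term; and because you never mutate at $i$ or $j$ (note $i\notin D$, else there would be a cycle downstream of the arrow), $x_i$ and $x_j$ are still cluster variables of the final seed. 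In short, your overall strategy is the right one; what is missing is the correct mutation schedule and an invariant that actually survives the mutations.
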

Note that any arrow with an endpoint at a sink or source of a quiver is not in any bi-infinite path. 

\subsection{Cluster algebras from surfaces}\label{sec:surfaces}
The connection between marked surfaces and cluster algebras was first established by Fomin, Shapiro, and Thurston \cite{fomin}.

Let $S$ be an orientable 2-dimensional Riemann surface with or without boundary. We designate a finite number of points, $M$, in the closure of $S$ as marked points. We require at least one marked point on each boundary component. We call marked points in the interior of $S$ \textbf{punctures.} Together the pair $\Sigma=(S,M)$ is called a \textbf{marked surface.} 
For technical reasons we exclude the cases when $\Sigma$ is one of the following: 
\begin{itemize}
\item a sphere with less than four punctures;
\item an unpunctured or once punctured monogon;
\item an unpunctured digon; or 
\item an unpunctured triangle. 
\end{itemize}
Note that the construction allows for spheres with four or more punctures.

Up to homeomorphism a marked surface is determined by four things. The first is the genus $g$ of the surface. The second is the number of boundary components $b$ of $S$. The third is the number of punctures $p$ in $M$, and the fourth is the set $m=\{m_i\}_{i=1}^{b}$ where $m_i\in \mathbb{Z}_{>0}$ denotes the number of marked points on the $i$th boundary component of $S.$ We say a marked surface is \textbf{closed} if it has no boundary. 
\begin{definition}
An \textbf{arc} $\gamma$ in $(S,M)$ is a curve in $S$ such that:
\begin{itemize}
\item The endpoints of $\gamma$ are in $M$. 
\item $\gamma$ does not intersect itself, except that its endpoints may coincide.
\item $\gamma$ is disjoint from $M$ and the boundary of $S$, except at its endpoints. 
\item $\gamma$ is not isotopic to the boundary, or the identity. 
\end{itemize}
An arc is called a \textbf{loop} if its two endpoints coincide.
% We differentiate between two types of loops, namely separating and non-separating loops. A loop $\gamma \subset T$ is \textbf{separating} if $T \setminus \gamma$ consists of two connected components, and \textbf{non-separating} otherwise. We will refer to a separating loop as a monogon. 

Each arc is considered up to isotopy. Two arcs are called compatible if there exists two arcs in their respective isotopy classes that do not intersect in the interior of $S$. 
\end{definition}

\begin{definition}
A \textbf{taggd arc} is constructed by taking an arc that does not cut out a once-punctured monogon and marking or "tagging" its ends as either \textbf{plain} or \textbf{notched} so that: \begin{itemize}
\item an endpoint lying on the boundary of $S$ is tagged plain; and
\item both ends of a loop must be tagged in the same way. 
\end{itemize} 
In figures we use a $\bowtie$ to denote when the tagging of an arc is notched. Two tagged arcs are considered compatible if: \begin{itemize}
\item Their underlying untagged arcs are the same, and their tagging agrees on exactly one endpoint. 
\item Their underlying untagged arcs are distinct and compatible, and any shared endpoints have the same tagging. 
\end{itemize}
 A maximal collection of pairwise compatible tagged arcs is called a \textbf{(tagged) triangulation} of $(S,M)$. 
\end{definition}

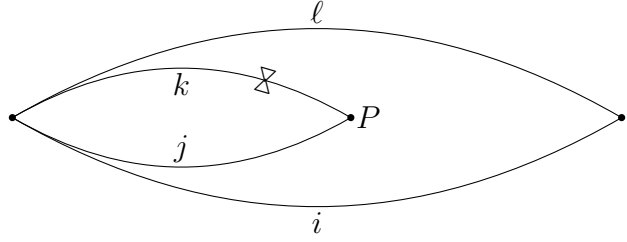
\begin{figure}[t]
\begin{center}
\begin{tikzpicture}[scale = 0.9]
\draw (0,0) to[bend left] node[above]{$\ell$} (9,0) to[bend left] node [below]{$i$} (0,0);
\draw (0,0) to[bend left]  node [below]{$k$} node[near end,sloped,rotate=90]{$\bowtie$} (5,0) to[bend left] node [above]{$j$} (0,0);

\fill (0,0) circle (1.5pt);
\fill (5,0) circle (1.5pt);
\fill (9,0) circle (1.5pt);
\draw (5,0) node[right] {$P$};
\end{tikzpicture}
\end{center}
\caption[Example of a radial puncture.]{The puncture $P$ is a radial puncture.}\label{fig:radial}
\end{figure}

\begin{definition}
We call a puncture $P$ a \textbf{radial puncture in a tagged triangulation} if and only if $P$ is the unique puncture in the interior of a digon and there exists two arcs in the interior of this digon that differ only by their tagging at $P$. See Figure~\ref{fig:radial}.
\end{definition}

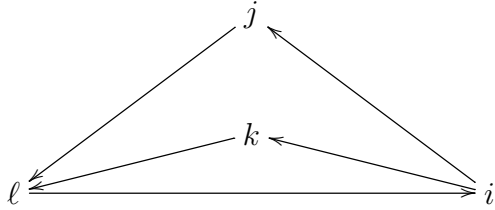
\begin{figure}[t]

\centering
$$
\begin{xy} 0;<-.9pt,0pt>:<0pt,-.9pt>:: 
(0,75) *+{i} ="0",
(100,0) *+{j} ="1",
(100,50) *+{k} ="2",
(200,75) *+{\ell} ="3",
"0", {\ar"1"},
"0", {\ar"2"},
"1", {\ar"3"},
"2", {\ar"3"},
"3",{\ar"0"},
\end{xy}$$
\caption[Quiver corresponding to a punctured digon.]{A special situation in the construction of a quiver from the triangulation given in Figure~\ref{fig:radial}.}\label{fig:arc}
\end{figure}

\begin{definition}\label{def:quivfromtri}
Let $T$ be a triangulation of a marked surface. The \textbf{quiver associated to $T$}, which we will denote as $Q_T$, is the quiver obtained from the following construction. For each arc $\alpha$ in a triangulation $T$ add a vertex $v_\alpha$ to $Q_T$. If $\alpha_i$ and $\alpha_j$ are two edges of a triangle in $T$ with $\alpha_j$ following $\alpha_i$ in a clockwise order, then add an edge to $Q_T$ from $v_{\alpha_i} \rightarrow v_{\alpha_j}$. If $\alpha_k$ and $\alpha_j$ have the same underlying untagged arc as in Figure~\ref{fig:radial} we refer you to Figure~\ref{fig:arc} for the construction in this situation. Note that the quiver is the same whether $\alpha_j$ or $\alpha_k$ is tagged. More generally distinct triangulations may yield the same quiver. 
\end{definition}

Finally we construct a cluster algebra from a surface by considering the the cluster algebra generated from the quiver associated to a triangulation.

\begin{definition}
For any surface $\Sigma$ with triangulation $T$ the cluster algebra associated to the marked surface $\Sigma$ is the cluster algebra $\A_\O(\tilde{Q}_T).$
\end{definition}

In this analogy where surfaces correspond to cluster algebras we have that arcs correspond to cluster variables and triangulations correspond to clusters. Musiker, Schiffler, and Williams give a combinatorial formula for the Laurent expression of any arc in a tagged triangulation \cite{MSW}. In \cite{MSW2} they extend the construction to give the Laurent expression for any non-contractible loop of the surface. For a fixed triangulation $T$ of a marked surface $\Sigma$ we denote the Laurent expression in $\A_\O^\bullet(Q_T)$ of an arc or non-contractible loop $\gamma \in \Sigma$ by $x_\gamma.$ 

It is possible and oftentimes easier to find algebraic relations between cluster variables by working with the arcs of the surface.

\begin{definition}\cite{MSW2}\label{def:smoothing}
Let $\alpha$, $\gamma_1$ and $\gamma_2$ be arcs and closed loops such that \begin{itemize}
\item $\gamma_1$ crosses $\alpha_2$ at a point $x$. 
\item $\gamma$ has a self crossing at a point $x$. 
\end{itemize}
We take $C = \{\gamma_1,\gamma_2\}$ (respectively, $C=\{\gamma\}$) and define the smoothing of $C$ at $x$ to be the pair of multicurves $C_+ = \{\alpha_1,\alpha_2\}$ (respectively, $\{\alpha\}$) and $C_-=\{\beta_1,\beta_2\}$ (respectively, $\{\beta\}$). Here  $C_+$ is obtained by replacing the crossing $\times$ with $ \ou $ and similarly $C_-$ is obtained by replacing the crossing $\times$ with $ \supset\subset$. See Figure~\ref{fig:smoothing}.
\end{definition}

\begin{figure}
\includegraphics[scale=.7]{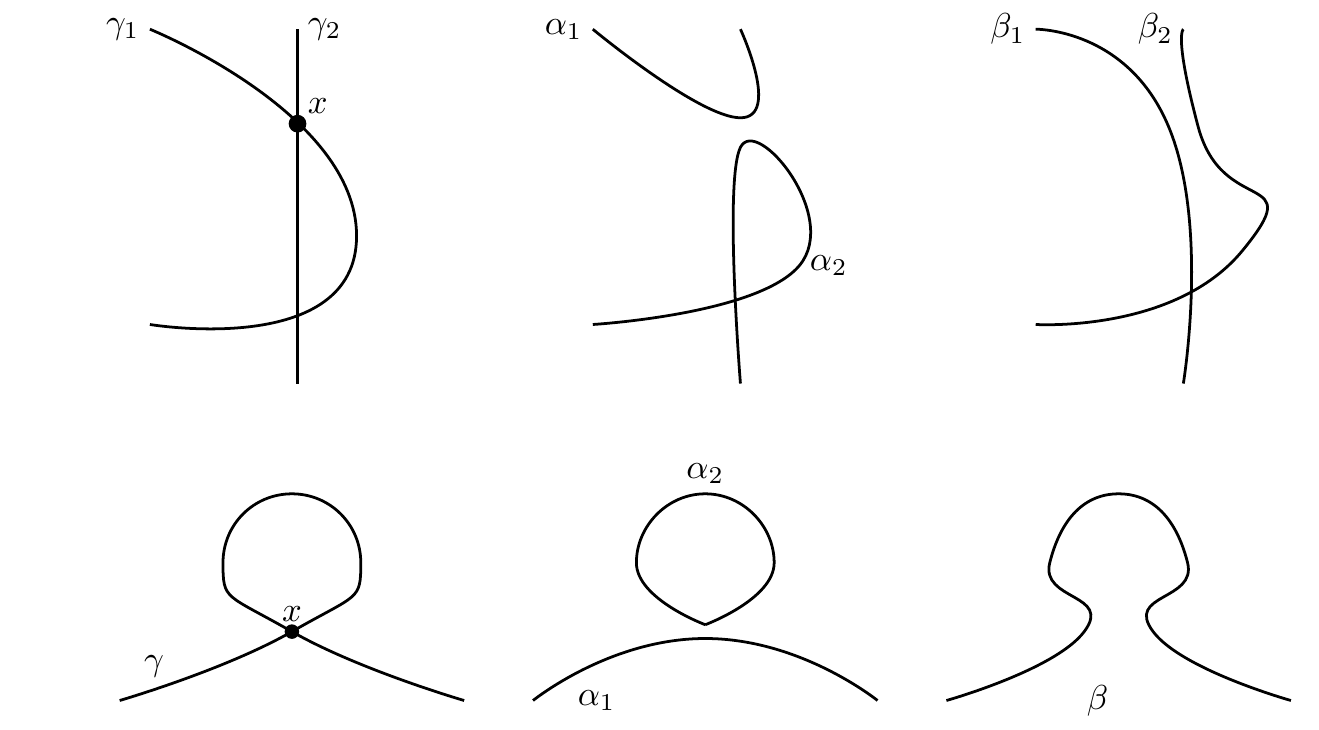}
\caption[The smoothing of crossings]{The two cases for smoothing a crossing as defined in Definition~\ref{def:smoothing}.}\label{fig:smoothing}
\end{figure}

Then a smoothing gives rise to the following algebraic relations that we use in Section~\ref{sec:LA_finite} to find coprime cluster variables in cluster algebras arising from marked surfaces. 

\begin{theorem}\cite{MW}\label{thm:smooth_relation}
Let $C,$ $C_+$, and $C_-$ as in the previous definition. Then we have the relation $ x_C = Y_1 x_{C+} +Y_2x_{C_-}$ in $\A_\O^\bullet(Q_T)$ where $Y_1$ and $Y_2$ are monomials in the coefficient variables $y_{\tau_i}$
\end{theorem}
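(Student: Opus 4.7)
The plan is to deduce this skein-type identity from the combinatorial Laurent expansion formulas of Musiker--Schiffler--Williams for arcs and closed loops on a triangulated surface. Each of $x_C$, $x_{C_+}$, and $x_{C_-}$ can be written as a positive sum indexed by perfect matchings of a snake graph (for arcs) or band graph (for closed loops) built from the sequence of triangles crossed by the curve; the principal coefficient version carries monomials in the frozen variables $y_{\tau_i}$ as weights on the matchings. So the goal is to produce a weight-preserving bijection between the indexing set for $x_C$ and the disjoint union of the indexing sets for $x_{C_+}$ and $x_{C_-}$, up to a single overall monomial factor $Y_1$ on one side and $Y_2$ on the other.

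First I would reduce to a purely local statement near the crossing point $x$. Choose a triangulation $T$ in general position with respect to $C$, $C_+$, and $C_-$, so that these curves agree outside a small disc $D$ around $x$ and differ only inside $D$ by the local pictures in Definition~\ref{def:smoothing}. The snake/band graphs of $C$, $C_+$, $C_-$ then agree outside the portion coming from the crossings of $T$ inside $D$, and the arrows of the bijection happen entirely inside that local piece. In the case of two arcs crossing transversely at $x$, the Musiker--Schiffler--Williams construction glues the snake graphs of $\gamma_1$ and $\gamma_2$ along the tile dual to the arc through $x$; the two ways of resolving the crossing correspond precisely to the two choices of how a perfect matching uses the two edges of this glueing tile, which in turn partitions the matchings into a $C_+$-piece and a $C_-$-piece. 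The self-crossing case is essentially the same, with a snake graph replaced by a band graph whose identification is cut at $x$.

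Next I would read off the coefficient monomials $Y_1$ and $Y_2$. In the matching bijection, the matchings of the smoothed curves carry most of the weight intrinsically, but the matchings of $C$ acquire an extra factor from the arcs of $T$ incident to the triangle containing $x$; this factor is exactly a monomial in the $y_{\tau_i}$ indexed by those arcs, and it is constant on each of the two matching classes. Collecting these constants yields the relation $x_C = Y_1 x_{C_+} + Y_2 x_{C_-}$ directly in $\O_Y[\x^{\pm 1}]$, and by Theorem~\ref{thm:Laurent} it then holds inside $\A_\O^\bullet(Q_T)$.

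The main obstacle, as in the MSW proof, is checking the weight-preservation of the bijection with the correct bookkeeping of the $y_{\tau_i}$-monomials: one has to track carefully how the "height monomial" of a matching transforms when the snake graph is cut or glued at the crossing tile, and verify that any mismatch is a single global factor that can be absorbed into $Y_1$ or $Y_2$. This is delicate because for closed loops one is matching a band graph rather than a snake graph, so one must also keep track of the pair of "good" matchings used in the closed-loop formula, and ensure that the smoothing respects the cyclic structure. Once the local bijection and its weight are established, the global identity follows by extending the matching outside of $D$ identically on both sides.
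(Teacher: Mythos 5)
There is a genuine gap here, and it is worth noting first that the paper itself does not prove Theorem~\ref{thm:smooth_relation}: it is imported from \cite{MW}, where the skein relations are established via the transfer-matrix (``matrix formulae'') expansions of arcs and loops and $\mathrm{GL}_2$/$\mathrm{SL}_2$ trace identities, with the coefficient monomials $Y_1,Y_2$ emerging from determinant factors. Your proposed route---a weight-preserving bijection between perfect matchings of snake and band graphs---is therefore a genuinely different strategy; it is essentially the ``snake graph calculus'' program later carried out by Canakci and Schiffler, which is a legitimate alternative proof, but only once the bijections are actually constructed.

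That construction is exactly what your sketch asserts rather than proves, and the way you describe it is not accurate. For a multicurve $C=\{\gamma_1,\gamma_2\}$ the Musiker--Schiffler--Williams formula does not glue the two snake graphs: $x_C=x_{\gamma_1}x_{\gamma_2}$ is a product of two matching sums, so what is needed is a correspondence between pairs of matchings of $G_{\gamma_1}\times G_{\gamma_2}$ and the matchings (or good matchings, in the band-graph case) attached to the resolutions $C_+$ and $C_-$. This correspondence is not local to a disc around the crossing and is not given by which of the two edges of a single ``glueing tile'' a matching uses: the actual maps swap entire tails of the two matchings past the crossing, the $y$-bookkeeping goes through global height monomials rather than only the arcs of the triangle containing $x$, and the case analysis (how the two snake graphs overlap, self-overlaps and the cyclic identification for band graphs, the role of the minimal/maximal matchings) is precisely the hard content of the theorem. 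The claim that the extra coefficient factor ``is constant on each of the two matching classes'' is the statement to be proved---it is where $Y_1$ and $Y_2$ come from---not an observation. Finally, the smoothing can produce degenerate components (contractible loops, curves isotopic to boundary segments or enclosing a single puncture), and these cases need separate treatment that the sketch does not mention. As written, then, this is an outline of what must be shown; to complete it you would need either the full snake-graph-calculus bijections or the matrix identities of \cite{MW}.
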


\section{Proof of Theorem~\ref{thm:big1} and Theorem~\ref{thm:big2}}\label{chapter:LA}

\subsection{Exceptional mutation-finite quivers}

Muller showed that the extended affine mutation classes $E_6^{(1,1)},E_7^{(1,1)},$ and $E_8^{(1,1)}$ are locally-acyclic \cite{muller}. It is known that the cluster algebra and upper cluster algebra are distinct for $\mathbb X_7$, but to the authors knowledge this fact was never written down. 
\begin{theorem}\label{thm:LA_exceptional}
All of the exceptional mutation-finite mutation classes generate locally-acyclic cluster algebras except for $\mathbb{X}_7.$ Furthermore, the cluster algebra for $\mathbb{X}_7$ is not equal to its upper cluster algebra.
\end{theorem}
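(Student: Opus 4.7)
The approach is to work through the classification of the 11 exceptional mutation classes given in Theorem~\ref{thm:mufinte} and split them into three groups. The finite Dynkin types $E_6, E_7, E_8$ and the affine types $\widetilde E_6, \widetilde E_7, \widetilde E_8$ all contain an acyclic orientation in their mutation class, so their cluster algebras are locally-acyclic by Theorem~\ref{thm:acyclic_LA}. The three extended affine types $E_6^{(1,1)}, E_7^{(1,1)}, E_8^{(1,1)}$ are handled by Muller in \cite{muller}. This reduces the proof to verifying local-acyclicity for $\mathbb{X}_6$ and establishing that the cluster algebra associated to $\mathbb{X}_7$ is properly contained in its upper cluster algebra.

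For $\mathbb{X}_6$, the plan is to find a representative quiver $Q \in \mut(\mathbb{X}_6)$ containing an arrow $i \to j$ which does not lie in any bi-infinite path (for instance at a local sink or source). By Lemma~\ref{lem:cover_is_coprime} the cluster variables $x_i$ and $x_j$ then form a covering pair, so Lemma~\ref{lem:short_LA} reduces the question to local-acyclicity of the two freezings at $x_i$ and $x_j$. Each freezing has a mutable part of rank five; direct inspection of the $\mathbb{X}_6$ mutation class should show that after one or at most two iterations of this coprime-freezing step, the mutable part admits an acyclic representative, and Theorem~\ref{thm:acyclic_LA} then closes out the case.

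For $\mathbb{X}_7$, the plan is to exhibit an explicit element of $\U^\bullet_\O(\mathbb{X}_7)$ not belonging to $\A^\bullet_\O(\mathbb{X}_7)$. Since principal coefficients make the exchange matrix of full rank and hence totally coprime, Theorem~\ref{thm:fullrank} reduces the Laurent-everywhere condition to a finite check on the initial cluster together with its seven one-step neighbors, and this computation is feasible because $\mut(\mathbb{X}_7)$ contains only two quivers up to isomorphism. I would construct a candidate Laurent polynomial from combinations of cluster monomials reflecting the rotational symmetry of the $\mathbb{X}_7$ quiver, verify its Laurent property on the eight required seeds, and then certify non-membership in $\A^\bullet_\O(\mathbb{X}_7)$ using a grading argument together with the Positivity Theorem (Theorem~\ref{thm:Positivity}) to bound the possible contributions of cluster monomials to the initial-cluster Laurent expansion.

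The main obstacle will be the non-membership argument for $\mathbb{X}_7$. Every arrow in either quiver of $\mut(\mathbb{X}_7)$ lies on a cycle on each side, so no covering pair exists and the freezing technique used for $\mathbb{X}_6$ is unavailable. The certificate that the chosen element lies outside $\A^\bullet_\O(\mathbb{X}_7)$ must rule out combinations of all cluster variables in the algebra, of which there are infinitely many, and the grading and positivity bookkeeping needed to control these contributions is the delicate technical core of the proof.
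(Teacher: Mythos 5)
Your case division is the same as the paper's (acyclic types via Theorem~\ref{thm:acyclic_LA}, the extended affine classes via \cite{muller}, then $\mathbb X_6$ and $\mathbb X_7$ separately), but there is a genuine gap at the decisive step: the proof that $\A_\O^\bullet(\mathbb X_7)\neq\U_\O$. You correctly identify the right kind of certificate (an element of $\U_\O$ excluded from $\A_\O^\bullet$ by a grading), but you produce neither the element nor the grading, and the mechanism you sketch for non-membership --- bounding contributions of infinitely many cluster monomials using the Positivity Theorem --- is both unnecessary and very hard to carry out; you yourself flag it as an unresolved ``delicate technical core.'' The missing idea that makes this step a one-line computation is the existence of a positive grading: by \cite[Proposition 3.2]{Grabowski}, setting $\deg(x_1)=2$ and $\deg(x_i)=1$ for $2\le i\le 7$ makes $\A_\O^\bullet(B_1)$ graded with every cluster variable homogeneous of degree $1$ or $2$, so the degree-zero component of $\A_\O^\bullet$ is exactly the coefficient ring $\O_Y$ and there is nothing infinite to control. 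The explicit witness is $Z=(y_2y_3x_2^2+x_3^2+y_2x_1)/(x_1x_2)$: total coprimality (your full-rank observation is a fine justification; the paper checks the columns of $B_1$ and $B_2$ directly) together with Theorem~\ref{thm:fullrank} puts $Z$ in $\U_\O$, while $\deg(Z)=0$ and $Z\notin\O_Y$ puts it outside $\A_\O^\bullet$. Without exhibiting such an element and such a grading, your proposal describes a search rather than a proof of the key claim.

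Two smaller points. For $\mathbb X_6$ the paper does not run a freezing argument at all: local-acyclicity of $\mathbb X_6$ is quoted from \cite{muller} alongside $E_6^{(1,1)}$, $E_7^{(1,1)}$, $E_8^{(1,1)}$; your covering-pair plan is plausible but rests on ``direct inspection should show,'' so as written it is an unverified sketch doing unnecessary work. Finally, the first assertion of the theorem also requires that $\mathbb X_7$ is \emph{not} locally-acyclic; this follows immediately from $\A_\O^\bullet\neq\U_\O$ via Theorem~\ref{thm:LA_AU}, a step your write-up leaves implicit and should state.
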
 
\begin{proof}
The exceptional mutation classes $E_6, E_7, E_8, \tilde{E}_6, \tilde E_7$ and $\tilde{E}_8$ are all acyclic so they are locally-acyclic by Theorem~\ref{thm:acyclic_LA}. Muller showed that $E_6^{(1,1)},E_7^{(1,1)},E_8^{(1,1)},$ and $\mathbb X_6$ are locally-acyclic \cite{muller}. 

It remains to be shown that the cluster algebra for $\mathbb{X}_7$ is not locally-acyclic. We first show that $\A_\O^\bullet \neq \U_\O$ by a standard grading argument. 

Note that there are two quivers in the mutation class of $\mathbb X_7$ and their corresponding exchange matrices are $$B_1 = \left(\begin{array}{rrrrrrr}
0 & 1 & -1 & 1 & -1 & 1 & -1 \\
-1 & 0 & 2 & 0 & 0 & 0 & 0 \\
1 & -2 & 0 & 0 & 0 & 0 & 0 \\
-1 & 0 & 0 & 0 & 2 & 0 & 0 \\
1 & 0 & 0 & -2 & 0 & 0 & 0 \\
-1 & 0 & 0 & 0 & 0 & 0 & 2 \\
1 & 0 & 0 & 0 & 0 & -2 & 0
\end{array}\right)
\text{ and } B_2 = \left(\begin{array}{rrrrrrr}
0 & -1 & 1 & -1 & 1 & -1 & 1 \\
1 & 0 & 1 & 0 & -1 & 0 & -1 \\
-1 & -1 & 0 & 1 & 0 & 1 & 0 \\
1 & 0 & -1 & 0 & 1 & 0 & -1 \\
-1 & 1 & 0 & -1 & 0 & 1 & 0 \\
1 & 0 & -1 & 0 & -1 & 0 & 1 \\
-1 & 1 & 0 & 1 & 0 & -1 & 0
\end{array}\right).$$

We consider the cluster algebra $\A_\O^\bullet(B_1)$ for an arbitrary open set $\O$. 
It is clear that both of the matrices $B_1$ and $B_2$ are coprime since for each pair of columns $c_1$ and $c_2$ in one of the matrices there is a row where $c_1$ has a 0 entry and $c_2$ has a non-zero entry and vice-versa. Therefore the cluster algebra is totally coprime. It follows from Lemma~\ref{thm:fullrank} that $Z = (y_2y_3x_2^2+x_3^2+y_2x_1)/(x_1x_2)$ is in the upper cluster algebra. 

Furthermore by \cite[Proposition 3.2]{Grabowski} taking $\deg(x_1) = 2$ and $\deg(x_i)=1$ for $2\leq i \leq 7$ the cluster algebra is graded with its generators concentrated in degrees 1 and 2. Note that the grading imposed on the cluster algebra is independent of our choice for $\O$.
Since $\deg(Z)=0$ we have that $\A_\O^\bullet(B_1) \neq \U_\O$. Then it follows that it is not locally-acyclic by Theorem~\ref{thm:LA_AU}.
\end{proof}

\subsection{Cluster algebras from surfaces}\label{sec:LA_finite}

We continue our discussion with known results about the local-acyclicity of surface cluster algebras. Recall that marked surfaces are parameterized by four values: the genus of the surface, the number of boundary components, the total number of marked points, and the number of marked points on each boundary component. Note every boundary component must have at least one marked point.

\begin{lemma}\cite{ladkani}\label{lem:1punc_no_au}
For a once-punctured surface with no boundary components the upper cluster algebra is not equal to the cluster algebra. 
\end{lemma}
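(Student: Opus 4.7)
The plan is to extend the grading argument used for $\mathbb X_7$ in Theorem~\ref{thm:LA_exceptional}. Fix a once-punctured closed surface $\Sigma_g$ of genus $g \geq 1$, a tagged triangulation $T$ consisting of $n = 6g-3$ arcs, and let $Q_T$ be the associated quiver.

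First I would establish a convenient $\Z$-grading on $\A_\O^\bullet(Q_T)$. The key structural observation is that in $Q_T$ every vertex has in-degree equal to out-degree: each arc of $T$ lies in exactly two triangles of $T$, contributing one clockwise predecessor and one clockwise successor per triangle, and the 2-cycle cancellation in Definition~\ref{def:mutation} preserves this balance. Hence every column of the principal exchange matrix $B_T$ sums to zero, so $\mathbf{1} \in \ker B_T$, and the argument of Grabowski invoked in the proof of Theorem~\ref{thm:LA_exceptional} then produces a $\Z$-grading on $\A_\O^\bullet(Q_T)$ in which every cluster variable has strictly positive degree and the coefficient variables have degree zero; in particular, every element of $\A_\O^\bullet(Q_T)$ outside $\O_Y$ has strictly positive degree.

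Next I would exhibit an element of $\U_\O$ of non-positive degree. The extended exchange matrix $\tilde B_T$ is coprime (its lower block is the identity), so Theorem~\ref{thm:fullrank} reduces membership in $\U_\O$ to Laurent-ness in the initial cluster and the $n$ mutation-adjacent clusters. For the Markov (genus-$1$) case I would take the classical element
\[
Z = \frac{y_1 x_1^2 + y_2 x_2^2 + y_3 x_3^2 + (\text{lower-order corrections})}{x_1 x_2 x_3},
\]
which has degree $-1$, and verify the Laurent condition in the three adjacent clusters by direct computation. For $g \geq 2$ I would apply the skein/smoothing relations of Theorem~\ref{thm:smooth_relation} to a family of self-intersecting loops based at the puncture to obtain an analogous Laurent expression of non-positive degree satisfying the Laurent condition at every mutation-adjacent cluster.

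Since $\A_\O^\bullet(Q_T) \setminus \O_Y$ contains only elements of strictly positive degree, any non-positive-degree element of $\U_\O$ outside $\O_Y$ must lie in $\U_\O \setminus \A_\O^\bullet(Q_T)$, proving the lemma. The main obstacle I expect is the explicit construction in higher genus: verifying the Laurent condition on a candidate element across each of the $6g-4$ mutation-adjacent clusters is delicate, and showing via the skein identities that the resulting element has non-positive degree but is not already accounted for by a cluster-algebra identity requires careful bookkeeping, ultimately invoking the positivity statement of Theorem~\ref{thm:Positivity} to prevent spurious cancellations.
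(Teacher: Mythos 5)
You should first note that the paper itself gives no proof of this lemma: it is imported wholesale from Ladkani, so your argument has to stand on its own. Your genus-one outline is the classical grading argument for the Markov quiver and is fine in spirit, and the observation that every column of $B_T$ sums to zero for a triangulation of a closed surface is correct. The first genuine gap is the sentence claiming that ``the argument of Grabowski \ldots produces a $\Z$-grading in which every cluster variable has strictly positive degree.'' A grading in Grabowski's sense only guarantees that cluster variables are homogeneous; it says nothing about the sign of their degrees, and graded cluster algebras can perfectly well contain cluster variables of degree $\leq 0$. Positivity (in fact, degree exactly $1$ for every arc variable) is the real content of this half of the argument and needs its own proof --- for instance via the snake-graph expansion of Musiker--Schiffler--Williams, where each numerator monomial has exactly one more $x$-factor than the common denominator $\prod_{\tau} x_\tau^{e(\tau,\gamma)}$, or by an induction using that every quiver in the mutation class again comes from a triangulation of the same once-punctured closed surface, with in-degree equal to out-degree at every vertex. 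In the proof of Theorem~\ref{thm:LA_exceptional} this issue is invisible only because the $\mathbb X_7$ class has two quivers and the degree bounds are a quoted computation; here the mutation class is infinite and the claim cannot be waved through.

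The second and more serious gap is the case of genus $g \geq 2$: you never actually produce an element of $\U_\O$ of non-positive degree. ``A family of self-intersecting loops based at the puncture'' with ``careful bookkeeping'' is a plan, not an argument, and since the once-punctured torus case is classical, the higher-genus construction is precisely where the content of the lemma lies; as written the proof is incomplete there. (The appeal to Theorem~\ref{thm:fullrank} also requires total coprimality, not just coprimality of the initial extended matrix; this is harmless with principal coefficients because full rank is mutation-invariant, but it should be said.) A cleaner route for the second half, in any genus, is to take a non-contractible simple closed loop $L$: under the all-ones grading $x_L$ has degree $0$ (band-graph matchings have as many edges as tiles), it does not lie in $\O_Y$, and once positivity of cluster-variable degrees is established the degree-zero part of $\A_\O^\bullet(Q_T)$ is exactly $\O_Y$. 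But then you owe a proof that $x_L$ is Laurent in every cluster of a punctured closed surface; the results quoted in this paper (Theorem~\ref{thm:smooth_relation}, Lemma~\ref{lem:torus_loops}) cover only special settings, which is presumably why the paper defers the entire statement to Ladkani rather than proving it.
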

\begin{corollary}
Once-punctured closed surfaces are not locally-acyclic.  
\end{corollary}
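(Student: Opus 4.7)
The plan is to deduce this immediately by contraposition from the two preceding results. Specifically, Theorem~\ref{thm:LA_AU} tells us that every locally-acyclic cluster algebra satisfies $\A_\O = \U_\O$. So if I can show that the cluster algebra from a once-punctured closed surface fails this equality, the corollary follows instantly.

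Fortunately, that failure is exactly the content of Lemma~\ref{lem:1punc_no_au}: for a once-punctured surface with no boundary components, $\A_\O \neq \U_\O$. Therefore, assuming for contradiction that the cluster algebra of such a surface were locally-acyclic, Theorem~\ref{thm:LA_AU} would force $\A_\O = \U_\O$, directly contradicting Lemma~\ref{lem:1punc_no_au}. Hence no such surface can give a locally-acyclic cluster algebra.

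There is essentially no obstacle here; the corollary is a one-line contrapositive. The only thing to verify is that the ``cluster algebra'' referred to in Lemma~\ref{lem:1punc_no_au} and the one in Theorem~\ref{thm:LA_AU} are taken with compatible ground rings, which is the case by the setup in Section~\ref{sec:surfaces} where the surface cluster algebra is defined as $\A_\O(\tilde Q_T)$ for the triangulation quiver $Q_T$. So the proof amounts to writing: ``By Theorem~\ref{thm:LA_AU}, any locally-acyclic cluster algebra equals its upper cluster algebra. By Lemma~\ref{lem:1punc_no_au}, the cluster algebra from a once-punctured closed surface does not equal its upper cluster algebra. Therefore it cannot be locally-acyclic.''
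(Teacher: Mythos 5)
Your argument is correct and is exactly the paper's own proof: the corollary follows immediately by contraposition from Lemma~\ref{lem:1punc_no_au} and Theorem~\ref{thm:LA_AU}. Nothing further is needed.
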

\begin{proof}
The claim follows immediately from Lemma~\ref{lem:1punc_no_au} and Theorem~\ref{thm:LA_AU}. 
\end{proof}

Note that these are the only surface-type cluster algebras whose quivers do not admit a maximal green sequence. 

For surface cluster algebras with no punctures it has been shown that the bangle and bracelet collections are bases for the cluster algebra \cite{cls,MSW2}. Due to work of Fock and Goncharov \cite{FockGoncharov2006} this implies that $\A= \U$ for these cluster algebras.
\begin{theorem}[\cite{cls,MSW2}] \label{thm:surface_AU}
The cluster algebra associated to any surface with non-empty boundary and at least one marked point is equal to its upper cluster algebra. 
\end{theorem}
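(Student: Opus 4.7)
The plan is to establish $\A_\O = \U_\O$ by constructing an $\O_Y$-spanning set of $\U_\O$ that sits inside $\A_\O$; since Theorem~\ref{thm:Laurent} gives the reverse containment automatically, equality follows. I would fix a (tagged) triangulation $T$ of the surface $\Sigma$ so that, by Definition~\ref{def:quivfromtri}, every tagged arc or non-contractible loop $\gamma$ on $\Sigma$ corresponds to a Laurent element $x_\gamma$ of $\A_\O^\bullet(Q_T)$ with an explicit combinatorial formula via snake and band graphs. The crucial manipulation tool is the smoothing/skein relation (Theorem~\ref{thm:smooth_relation}), which allows one to rewrite any product $x_{\gamma_1} x_{\gamma_2}$ whose underlying curves cross as a two-term sum of products associated to the resolutions at the crossing.

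For the unpunctured case I would invoke the construction of \cite{MSW2}: the \emph{bracelet} set---finite collections of pairwise compatible simple arcs and loops on $\Sigma$, with loops allowed to carry Chebyshev-type ``bracelet powers''---maps to a distinguished collection $\{x_B\}$ in $\A_\O^\bullet(Q_T)$. Repeated application of the skein relations reduces any monomial in cluster variables to an $\O_Y$-linear combination of the $x_B$, so the bracelets span $\A_\O^\bullet(Q_T)$. Linear independence follows from comparing leading terms of their Laurent expansions in the cluster $\x$. To upgrade from spanning $\A_\O^\bullet$ to spanning $\U_\O$, I would argue that any rational function which is a Laurent polynomial in every cluster has a finite bracelet expansion: the $g$-vector degrees of the $x_B$ are known to cover the tropical lattice, so tracking the leading exponents of a hypothetical element of $\U_\O\setminus\A_\O$ across mutations rules out such an element. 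For the punctured case with nonempty boundary one uses the tagged-arc extension and ``generalized bangle'' basis from \cite{cls}, which follow the same template after the careful bookkeeping for tagged triangulations is in place.

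The principal obstacle is the promotion from spanning the cluster algebra to spanning the upper cluster algebra. Verifying that the bracelet set generates $\A_\O^\bullet$ is a bounded combinatorial computation controlled by the smoothing identities, but showing that no ``extra'' element can satisfy Laurentness simultaneously in all clusters requires a global grading or leading-term argument that uses the shape of the surface in an essential way. For the punctured case the tagged-arc formalism introduces further complications: one must track radial punctures (as in Figure~\ref{fig:radial}) and keep in mind that distinct tagged triangulations may yield the same quiver, so the bracelet labels carry redundancies that must be quotiented out before proving linear independence.
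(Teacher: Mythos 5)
The paper does not reprove this statement; it imports it from \cite{MSW2,cls}, and its stated justification is a two-step argument: the bangle/bracelet collections are \emph{bases of the cluster algebra} by \cite{MSW2,cls}, and by the work of Fock and Goncharov \cite{FockGoncharov2006} these same collections span the upper cluster algebra, so $\A \subseteq \U \subseteq \O_Y\text{-span}\{x_B\} \subseteq \A$. Your sketch reproduces the first step (skein relations as in Theorem~\ref{thm:smooth_relation} reduce any product of arc and loop elements to crossing-free multicurves, so the bracelets span $\A$; independence via leading terms), but it does not deliver the second step, which is the entire content of the theorem beyond Theorem~\ref{thm:Laurent}. The sentence ``the $g$-vector degrees of the $x_B$ are known to cover the tropical lattice, so tracking the leading exponents of a hypothetical element of $\U\setminus\A$ across mutations rules out such an element'' asserts the conclusion rather than proving it: to subtract bracelet elements and induct one needs to know that every universally Laurent element has a leading-term structure compatible with the bracelet expansion and that the reduction terminates, and this is exactly what the Fock--Goncharov duality with laminations (evaluation of the bangle/bracelet functions at tropical points) supplies in the cited proofs. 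You yourself flag this as the ``principal obstacle'' and leave it unresolved, so the nontrivial containment $\U \subseteq \A$ is not established by the proposal.

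A secondary problem is your case division. As used in this paper, the theorem concerns unpunctured surfaces with non-empty boundary; \cite{cls} is not about punctured surfaces but about unpunctured surfaces with \emph{exactly one} marked point, i.e.\ it is what lowers the hypothesis ``at least two marked points'' of \cite{MSW2} to ``at least one.'' Your proposed ``punctured case with nonempty boundary handled by the generalized bangles of \cite{cls}'' both misattributes that reference and skips the case that genuinely requires extra work (a single marked point, where the argument of \cite{MSW2} must be modified), while the tagged-arc discussion you offer is not covered by either cited result. So even granting the spanning argument for $\A$, the proposal neither closes the $\U\subseteq\A$ step nor correctly assembles the cases that the cited literature actually treats.
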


However, this work does not show that these cluster algebras are locally-acyclic. Muller showed that unpunctured surfaces with at least two marked points are locally-acyclic in \cite{muller} and Canakci, Lee, and Schiffler showed the result for surfaces with one boundary component and one puncture \cite{cls}. 

\begin{lemma}[\cite{cls,muller}]\label{thm:boundaryLA}
Any surface with non-empty boundary and at least two marked points is locally-acyclic. 
\end{lemma}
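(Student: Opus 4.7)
The plan is to bridge the two cited base cases by an induction on the number of punctures. Muller's result covers the unpunctured case (with at least two boundary marked points), and the Canakci--Lee--Schiffler result handles one boundary component with exactly one puncture. The remaining cases (multiple punctures, or multiple boundary components together with punctures) I would reduce to the base cases by geometrically cutting the surface along an arc from a puncture to a boundary marked point, which at the cluster-algebraic level corresponds to a freezing; the combinatorial tool is Lemma~\ref{lem:short_LA}.

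First I would fix a surface $\Sigma$ with non-empty boundary, at least two marked points, and $p \geq 1$ punctures that is not already a base case. Let $P$ be a puncture and choose two arcs $\gamma_1, \gamma_2$ with one endpoint at $P$ and the other endpoint on the boundary; these can be chosen compatible (for instance, landing at the same boundary marked point but homotopically distinct, or at two distinct boundary points). Extend $\{\gamma_1,\gamma_2\}$ to a tagged triangulation $T$ of $\Sigma$. Next I would verify that $x_{\gamma_1}$ and $x_{\gamma_2}$ are coprime in $\A_\O^\bullet(Q_T)$. One approach is to apply Lemma~\ref{lem:cover_is_coprime}: position the arcs so that the arrow between their vertices in $Q_T$ is not contained in any bi-infinite path, using the fact that a boundary endpoint of an arc contributes a source or sink in the local picture of $Q_T$. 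An alternative is to use the smoothing identity of Theorem~\ref{thm:smooth_relation} applied to the crossing pattern of $\gamma_1$ with a suitable auxiliary arc, yielding an algebraic relation that forces the ideal $(x_{\gamma_1}, x_{\gamma_2})$ to be the unit ideal.

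The next step is to identify the freezing of $\A_\O^\bullet(Q_T)$ at $x_{\gamma_i}$ with the cluster algebra of a cut surface $\Sigma_i$ obtained from $\Sigma$ by cutting along $\gamma_i$. Because $\gamma_i$ connects a puncture to the boundary, the cut surface $\Sigma_i$ has exactly $p-1$ punctures, still non-empty boundary, and at least two marked points (in fact, $\gamma_i$ contributes two new boundary marked points). By the inductive hypothesis, $\A_\O^\dagger$ is locally-acyclic for each $i \in \{1,2\}$. Lemma~\ref{lem:short_LA} then yields that $\A_\O^\bullet(Q_T)$ itself is locally-acyclic, completing the inductive step.

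The main obstacles will be twofold. First, there are degenerate small surfaces (for instance the once-punctured digon, or surfaces close to the excluded list at the beginning of Section~\ref{sec:surfaces}) where the desired pair of arcs $\gamma_1, \gamma_2$ may not fit simultaneously into a tagged triangulation, or where the cut surface lands in the excluded list; these cases must be treated by hand, likely by invoking the CLS result directly or by an alternative choice of covering pair. Second, establishing that freezing genuinely realizes the cut surface requires care with the tagged setting: punctures admit plain/notched taggings and one must check that the resulting ice quiver matches $Q_{T'}$ for the triangulation $T'$ of the cut surface induced by $T$, a compatibility that should be extracted from the quiver construction in Definition~\ref{def:quivfromtri} together with the freezing convention.
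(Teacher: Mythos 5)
A preliminary remark: the paper does not prove this lemma at all --- it is imported wholesale from \cite{muller} (unpunctured surfaces with at least two marked points) and \cite{cls} (the remaining punctured boundary case), so what you are proposing is a reconstruction of a result the paper only quotes. Your skeleton --- cut along puncture-to-boundary arcs, identify the freezing with the cut surface, conclude with Lemma~\ref{lem:short_LA} --- is exactly the mechanism the paper itself uses for the cases it does prove (Theorems~\ref{thm:LA_closed_surface} and~\ref{thm:LA_1punc_1bound}), so the strategy is reasonable. The genuine gap is the coprimality of $x_{\gamma_1}$ and $x_{\gamma_2}$, which in your write-up is asserted rather than proved, and both justifications you offer break down. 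For the covering-pair route (Lemma~\ref{lem:cover_is_coprime}) you need, first of all, an arrow between the two vertices of $Q_T$: if $\gamma_1,\gamma_2$ are the only arc ends at $P$, the two triangles at $P$ contribute opposite arrows that cancel (this is precisely the punctured-digon phenomenon of Figure~\ref{fig:arc}), so there may be no arrow at all; and even when an arrow exists, your reason that a boundary endpoint ``contributes a source or sink'' is false --- boundary segments are not vertices of $Q_T$, and in an unpunctured polygon every arc has both endpoints on the boundary while generic triangulations have vertices that are neither sources nor sinks. What is true (the paper's remark after Lemma~\ref{lem:cover_is_coprime}) is that an arrow at a source or sink avoids bi-infinite paths, but boundary incidence alone does not produce a source or sink, so you still owe an argument that your arrow has no cycle upstream or downstream. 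The smoothing route is not free either: as the paper's own Corollary~\ref{cor:coprime_closed_surface} and Lemma~\ref{lem:coprime_torus} make clear, one must exhibit an explicit identity (via Theorem~\ref{thm:msw_formula} or Theorem~\ref{thm:smooth_relation}) placing a coefficient monomial in the ideal $(x_{\gamma_1},x_{\gamma_2})$, and that monomial is a unit only after shrinking the ground ring to an open set where it is inverted. You produce no such identity for your configuration, so the hypothesis of Lemma~\ref{lem:short_LA} is unverified and the induction collapses at its first step.

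Two further points need attention even if coprimality is repaired. If it is obtained by smoothing, it is coefficient-dependent, so what you can prove is the open-set version of the statement (as in Theorem~\ref{thm:big1}), not the unqualified lemma; and those open-set hypotheses must be threaded through the induction, since the freezing $\A_\O^\dagger$ is a cluster algebra of the cut surface with a \emph{non-principal} coefficient pattern over $\O_Y^\dagger$ --- your inductive hypothesis (and the base cases from \cite{muller} and \cite{cls}) must therefore be available for arbitrary coefficients, which you do not address. You are right that the identification of the freezing with the cut surface requires care with taggings at $P$, and your handling of the degenerate small surfaces is deferred rather than done; these are fixable, but as written the proposal is an outline whose crucial step, the coprime covering pair, is missing.
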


It remains to be shown that closed surfaces with at least two punctures and surfaces with exactly one boundary component and one marked point are locally-acyclic.

We begin with the closed surface case. As a matter of notation if $\gamma$ is an arc on the surface that is tagged plain at a puncture $p$ then we will denote the arc that is identical to $\gamma$ except whose tagging at $p$ is notched by $\gamma^{(p)}.$ Similarly, if $\gamma$ has endpoints at two punctues $p$ and $q$ we use $\gamma^{(pq)}$ to denote the arc that is notched at both $p$ and $q$. 

\begin{theorem}\cite[Theorem 12.10]{MSW}\label{thm:msw_formula}
Fix a tagged triangulation $T$ of a marked surface $\Sigma$.
%which comes from an ideal triangulation, 
Let $p$ and $q$ be punctures in $\Sigma$, and let $\rho$ be an arc with both taggings plain whose endpoints are $p$ and $q$.  Then we have
 $$x_\rho x_{\rho^{(pq)}}y_{\rho^{(p)}}^{\chi(\rho^{(p)} \in T)}y_{\rho^{(q)}}^{\chi(\rho^{(q)} \in T)} - x_{\rho^{(p)}}x_{\rho^{(q)}}y_{\rho}^{\chi(\rho \in T)}y_{\rho^{(pq)}}^{\chi(\rho^{(pq)} \in T)}$$ is equal to $$Y(\rho,T):= \prod_{\tau \in T} y_\tau^{e(\tau,\rho)} \left( \prod_{\tau \in T} y_\tau^{e_p^{\bowtie} (\tau)} - \prod_{\tau \in T} y_\tau^{e_p(\tau)}  \right) \left( \prod_{\tau \in T} y_\tau^{e_q^{\bowtie}(\tau)}  - \prod_{\tau \in T} y_\tau^{e_q(\tau)} \right)$$ in the cluster algebra $\A_\O^\bullet(Q_T)$ for any open set $\O \subset \spec(\ZZ(\hat Q_T))$. Here $\chi$ is 1 if the argument is true and 0 otherwise, $e(\tau,\rho)$ is the number of times $\tau$ and $\rho$ cross, and $e_p(\tau)$ (respectively, $e_p^{\bowtie}(\tau)$) is the number of ends of $\tau$ that are incident to the
puncture $p$ with a plain (respectively, notched) tagging.
\end{theorem}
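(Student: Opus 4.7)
The plan is to prove the identity by a bijective matching argument, using the Musiker--Schiffler--Williams combinatorial formula that expands each $x_\gamma$ as a weighted sum over perfect matchings of an associated tagged snake graph $G_T(\gamma)$. First I would reduce to the generic case where none of $\rho, \rho^{(p)}, \rho^{(q)}, \rho^{(pq)}$ lies in $T$, so all four $\chi$-exponents on the left-hand side vanish. The remaining cases follow by specializing a single $x_\gamma$ to a cluster variable $x_k$ of $T$, which simultaneously introduces the $\chi(\gamma \in T)$ factor and adjusts the relevant $y$-exponent on the left-hand side; these shifts can be verified by inspection once the generic identity is established.

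Next I would expand each of the four cluster variables using the matching formula. Because the four tagged arcs share the same underlying untagged arc, their snake graphs have identical interior tiles---one tile for each crossing of $\tau \in T$ with $\rho$---and differ only in the puncture tiles attached near $p$ and $q$, where the tagging selects a specific local modification. Consequently, the two products $x_\rho x_{\rho^{(pq)}}$ and $x_{\rho^{(p)}} x_{\rho^{(q)}}$ each expand as a sum over pairs of matchings on a common interior graph with four possible boundary configurations, one at $p$ and one at $q$.

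I would then construct a weight-preserving pairing on these ordered pairs of matchings that swaps the boundary configuration at $p$ (and independently at $q$) whenever such a swap produces a matching on the other side of the identity. The pairs not covered by the involution are precisely those whose boundary data at $p$ or $q$ is extremal---i.e.\ the local tile choice forces a unique completion. Collecting these extremal residues yields the two-factor product expression on the right, where the factor $\bigl(\prod_\tau y_\tau^{e_p^{\bowtie}(\tau)} - \prod_\tau y_\tau^{e_p(\tau)}\bigr)$ records the difference of the two extremal $y$-monomials at the puncture $p$, and similarly for $q$. The overall prefactor $\prod_\tau y_\tau^{e(\tau,\rho)}$ is the common shift needed to clear the shared Laurent denominators of the four expansions.

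The main obstacle is making the local puncture bijection precise, since at a puncture the Musiker--Schiffler--Williams construction attaches a doubled region whose matchings enumerate how the arc winds at the puncture; one must verify that exactly two extremal matchings survive per puncture, contributing $\prod_\tau y_\tau^{e_p(\tau)}$ and $\prod_\tau y_\tau^{e_p^{\bowtie}(\tau)}$, and that the non-extremal local matchings cancel in pairs between the two sides of the identity. Once this local statement is proved for an isolated puncture, the global identity follows by factoring the independent contributions at $p$ and $q$, and the tagged-arc case analysis reduces to routine exponent bookkeeping governed by the $\chi$ symbols.
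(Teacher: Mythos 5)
Note first that the paper contains no proof of this statement: it is imported verbatim from \cite{MSW} (Theorem 12.10 there) and used as a black box, so the only meaningful comparison is with the source. Against that, the central gap in your plan is its premise that all four variables $x_\rho$, $x_{\rho^{(p)}}$, $x_{\rho^{(q)}}$, $x_{\rho^{(pq)}}$ admit matching expansions over ``identical interior tiles'' differing only in local puncture tiles. That is not how the expansion of a notched arc works: for $\rho^{(p)}$ the combinatorial formula is not a boundary modification of the snake graph of $\rho$ but is extracted from the much larger snake graph of the loop $\ell_p$ enclosing the puncture (it gains tiles for the crossings with every arc of $T$ incident to $p$, and one must restrict to the suitable symmetric matchings), and for the doubly-notched arc $\rho^{(pq)}$ there is no independent matching formula available at this stage at all --- in \cite{MSW} the identity you are trying to prove is essentially the input from which the Laurent expansion of $x_{\rho^{(pq)}}$ is subsequently derived. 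So your pairing argument is circular in the $\rho^{(pq)}$ factor and undefined in the singly-notched factors: the ``common interior graph with four boundary configurations'' on which your involution is supposed to act does not exist.

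Beyond that, the step you defer as ``the main obstacle'' is the entire content of the theorem: that after cancellation exactly two extremal monomials survive at each puncture, contributing $\prod_{\tau\in T} y_\tau^{e_p(\tau)}$ and $\prod_{\tau\in T} y_\tau^{e_p^{\bowtie}(\tau)}$, with global prefactor $\prod_{\tau\in T} y_\tau^{e(\tau,\rho)}$, is asserted rather than argued, and nothing in the proposal explains why these particular exponents appear. A structural warning sign is that the identity is not subtraction-free (setting all $y_\tau=1$ makes the right-hand side vanish, giving $x_\rho x_{\rho^{(pq)}}=x_{\rho^{(p)}}x_{\rho^{(q)}}$ coefficient-free), so a positivity-style bijection of matchings is an awkward tool; the source instead obtains the identity from the Fomin--Thurston lambda-length and lamination interpretation of tagged arcs, where conjugating the horocycle at $p$ rescales every arc ending at $p$ (whence the coefficient-free identity) and the factors $g(T,p)$, $g(T,q)$ record the two spiralling directions of laminations at the punctures. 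To salvage a genuinely combinatorial proof you would first need an independent expansion of doubly-notched arcs (available only in later snake-graph calculus), and you would also have to replace the reduction ``specialize a single $x_\gamma$ to $x_k$'' --- which is not a defined operation --- by a direct check of the degenerate cases in which one of the four tagged arcs lies in $T$.
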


Define $f(\rho,T) = \prod_{\tau \in T} y_\tau^{e(\tau,\rho)} $ and $g(T,p) =  \left( \prod_{\tau \in T} y_\tau^{e_p^{\bowtie} (\tau)} - \prod_{\tau \in T} y_\tau^{e_p(\tau)}  \right)$ so we have $Y(\rho,T) = f(\rho,T) g(T,p) g(T,q)$.

\begin{corollary} \label{cor:coprime_closed_surface}
Let $\Sigma, T, \rho, p$ and $q$ be as in Theorem~\ref{thm:msw_formula}. 
The cluster variables $x_\rho$ and $x_{\rho^{(p)}}$ are coprime in $\A_{\O}^\bullet(Q_T)$ for any open set $\O \subset \spec(\ZZ(\hat Q_T))$ that is contained in the intersection $D_{f(\rho,T)} \cap D_{g(T,p)} \cap D_{g(T,q)}$.  
\end{corollary}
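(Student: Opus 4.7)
The plan is to derive coprimeness directly from the identity supplied by Theorem~\ref{thm:msw_formula}, which expresses $Y(\rho,T) = f(\rho,T)\,g(T,p)\,g(T,q)$ as an explicit element of $\A_{\O}^{\bullet}(Q_T)$. My approach splits into two observations: (i) the left-hand side of that identity lies in the ideal $(x_\rho, x_{\rho^{(p)}}) \subseteq \A_{\O}^{\bullet}(Q_T)$, and (ii) under the stated hypothesis on $\O$, the right-hand side $Y(\rho,T)$ is a unit in the ground ring $\O_Y$. An ideal that contains a unit equals the whole ring, which is exactly the coprimeness condition given just before Lemma~\ref{lem:coprimecover}.

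For (i) I would simply inspect the two summands on the LHS of Theorem~\ref{thm:msw_formula}. The first summand carries a factor of $x_\rho$ and the second a factor of $x_{\rho^{(p)}}$, so their difference, which equals $Y(\rho,T)$, sits inside $(x_\rho, x_{\rho^{(p)}})$. For (ii) I would invoke the fact that the ring of regular functions on a basic open $D_h$ inverts $h$: since $\O \subseteq D_{f(\rho,T)} \cap D_{g(T,p)} \cap D_{g(T,q)}$ by hypothesis, each of the three factors $f(\rho,T)$, $g(T,p)$, $g(T,q)$ becomes a unit in $\O_Y$, and hence so does their product $Y(\rho,T)$. Combining (i) and (ii) yields $(x_\rho, x_{\rho^{(p)}}) = \A_{\O}^{\bullet}(Q_T)$, which is coprimeness.

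I do not foresee any serious obstacle here; the corollary is essentially a repackaging of the Musiker--Schiffler--Williams identity as an exchange-type relation whose right-hand ``constant'' $Y(\rho,T)$ is forced into $\O_Y^{\times}$ by a small enough restriction of the base open set. The only mild piece of bookkeeping is to confirm that restricting $\O$ to the intersection of the $D_h$'s genuinely makes each $h$ invertible in $\O_Y$, which is immediate from the description of the ring of regular functions $\O_Y = \ZZ[\Lambda^{-1}]$ given at the start of Section~\ref{sec:cluster_algebra}, together with the fact that the identity of Theorem~\ref{thm:msw_formula} is stated to hold in $\A_{\O}^{\bullet}(Q_T)$ for any such $\O$.
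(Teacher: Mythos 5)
Your proposal is correct and follows the paper's own argument essentially verbatim: the identity of Theorem~\ref{thm:msw_formula} places $Y(\rho,T)=f(\rho,T)\,g(T,p)\,g(T,q)$ in the ideal $(x_\rho, x_{\rho^{(p)}})$, and the restriction $\O \subseteq D_{f(\rho,T)} \cap D_{g(T,p)} \cap D_{g(T,q)}$ makes $Y(\rho,T)$ a unit in $\O_Y$, forcing the ideal to be all of $\A_\O^\bullet(Q_T)$. No gaps; this matches the paper's proof.
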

\begin{proof}
 By Theorem~\ref{thm:msw_formula} we have that $Y(\rho,T)$ is in the ideal generated by $x_\rho$ and $x_{\rho^{(p)}}$. However, the polynomial $Y(\rho,T)$ is a unit in $\O_Y$ so the ideal generated by $x_\rho$ and $x_{\rho^{(p)}}$ is the entire cluster algebra.
\end{proof}

\begin{theorem}\label{thm:LA_closed_surface}
Let $\Sigma$ be closed surface with at least two punctures. 
Let $p$ and $q$ be two punctures of $\Sigma$ and let $\rho$ be the plain arc whose endpoints are $p$ and $q$. Fix a triangulation $T$ of $\Sigma$ that contains $\rho$ and $\rho^{(p)}.$
The cluster algebra $\A_{\O}^\bullet(Q_T)$ is locally-acyclic for any open set $\O \subset \spec(\ZZ(\hat Q_T))$ that is contained in the intersection $D_{f(\rho,T)} \cap D_{g(T,p)} \cap D_{g(T,q)}$.  
\end{theorem}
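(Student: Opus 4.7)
The plan is to invoke Lemma~\ref{lem:short_LA} for the coprime pair $(x_\rho, x_{\rho^{(p)}})$. Coprimality over $\O_Y$ is immediate from Corollary~\ref{cor:coprime_closed_surface}, given the standing hypothesis that $\O \subseteq D_{f(\rho,T)} \cap D_{g(T,p)} \cap D_{g(T,q)}$. So the remaining substantive task is to verify that the freezing of $\A_\O^\bullet(Q_T)$ at each of $x_\rho$ and $x_{\rho^{(p)}}$ is locally-acyclic.

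To handle the freezing at $x_\rho$, the strategy is to realize the mutable part of the frozen seed as the quiver of a triangulation $T^\dagger$ of the marked surface $\Sigma^\dagger$ obtained from $\Sigma$ by cutting along $\rho$. Because the endpoints of $\rho$ are the two distinct punctures $p$ and $q$, the surgery creates a new boundary component on which $p$ and $q$ appear as boundary marked points, while every other puncture of $\Sigma$ (if any) is preserved. Thus $\Sigma^\dagger$ has nonempty boundary and at least two marked points. By Lemma~\ref{thm:boundaryLA}, the surface cluster algebra $\A^\bullet_{\O^\dagger}(Q_{T^\dagger})$ is locally-acyclic, and this coincides with the freezing $\A^\dagger_\O$. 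The freezing at $x_{\rho^{(p)}}$ is handled by the same reasoning: the underlying curve of $\rho^{(p)}$ coincides with that of $\rho$, so the cutting surgery yields the same surface $\Sigma^\dagger$ with nonempty boundary and at least two marked points, and Lemma~\ref{thm:boundaryLA} again produces local-acyclicity. With both freezings locally-acyclic and the pair coprime, Lemma~\ref{lem:short_LA} concludes that $\A_\O^\bullet(Q_T)$ is locally-acyclic.

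The main subtlety I expect is the precise identification of the frozen seed with the quiver of a triangulation of $\Sigma^\dagger$. One must verify that the arrows between the frozen vertex and its neighbors in $Q_T$ correctly encode the boundary incidences in $\Sigma^\dagger$, and that in the notched case the tagging of $\rho^{(p)}$ at $p$ does not disrupt this identification once $p$ becomes a boundary marked point of $\Sigma^\dagger$, where taggings no longer play a role. This geometric-combinatorial matchup is standard in the surface theory of cluster algebras, but it is the step where care is most needed; the rest of the argument is a clean three-step chain through Corollary~\ref{cor:coprime_closed_surface}, Lemma~\ref{thm:boundaryLA}, and Lemma~\ref{lem:short_LA}.
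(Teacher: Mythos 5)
Your proposal is correct and follows essentially the same route as the paper: coprimality of $x_\rho$ and $x_{\rho^{(p)}}$ via Corollary~\ref{cor:coprime_closed_surface}, identification of the freezing at $x_\rho$ with the cluster algebra of the surface cut along $\rho$ (nonempty boundary, at least two marked points, so Lemma~\ref{thm:boundaryLA} applies), and then Lemma~\ref{lem:short_LA}. The only cosmetic difference is that for the freezing at $x_{\rho^{(p)}}$ the paper simply notes that the symmetry of the two vertices in $Q_T$ makes the frozen quivers isomorphic, rather than re-running the cutting argument for the notched arc as you do.
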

\begin{proof}
By Corollary~\ref{cor:coprime_closed_surface} we know that $x_\rho$ and $x_\rho^{(p)}$ are coprime and are elements of the same cluster that is associated with $T$. Let $\A^\dagger_\O$ be the cluster algebra obtained from the freezing of $x_\rho$ and $A^\ddagger_\O$ the cluster algebra obtained from the freezing of $x_{\rho^{(p)}}$. The cluster algebra $\A^\dagger_\O$ arises from the surface obtained from cutting out a disc from $\Sigma.$ In particular the cut surface will be of the same genus and have the same number of marked points as $\Sigma$, but adds a boundary component on which the marked points $p$ and $q$ now lie. An example in the genus 1 case is shown in Figure~\ref{fig:two_marked_point_torus_freeze}. Therefore $\A^\dagger_\O$ is locally-acyclic by Theorem~\ref{thm:boundaryLA}. By the symmetry of the vertices corresponding to $x_\rho$ and $x_{\rho^{(p)}}$ in the quiver $Q_T$ we see that the two freezings form isomorphic quivers so $\A^\ddagger_\O$ is also locally-acyclic. It follows from Lemma~\ref{lem:short_LA} that $\A_\O^\bullet$ is locally-acyclic. 
\end{proof}

\begin{figure}
\centering{
\underline{\includegraphics[scale=.6]{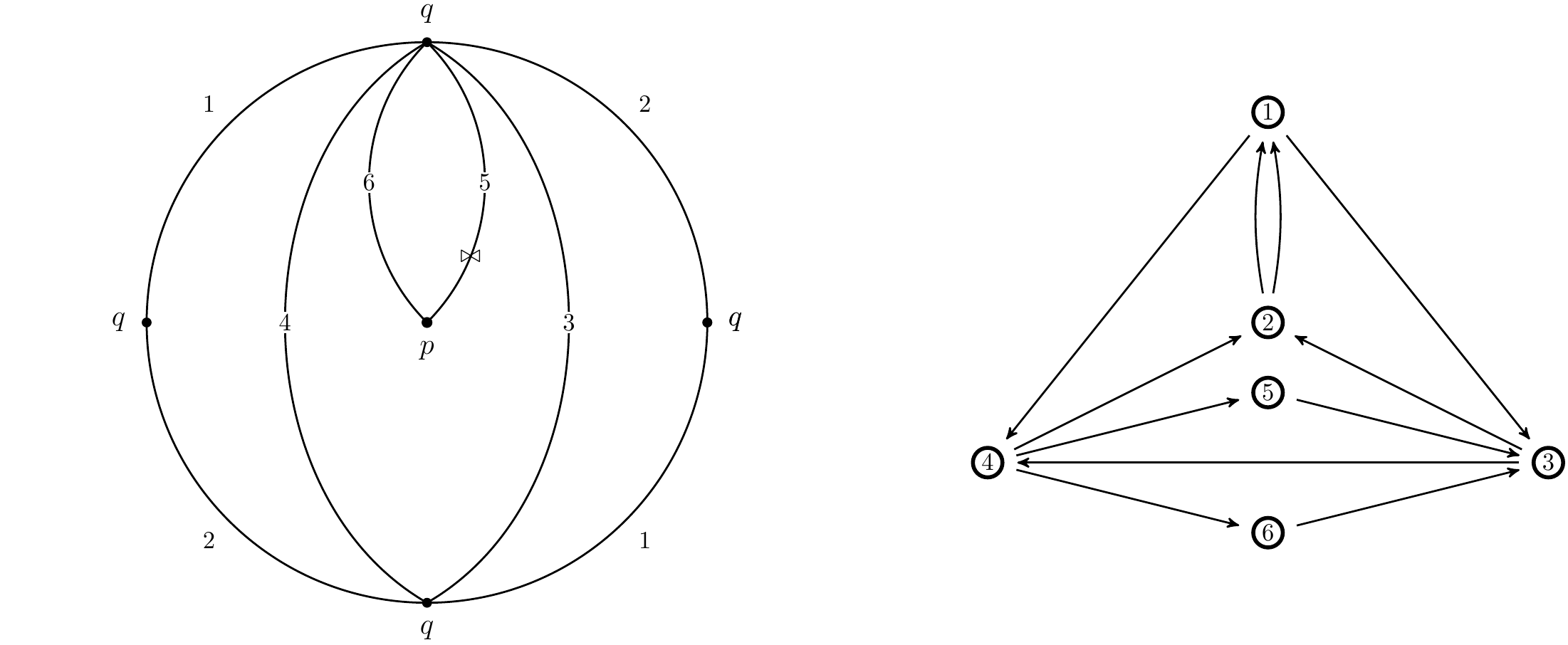}
}
\includegraphics[scale=.6]{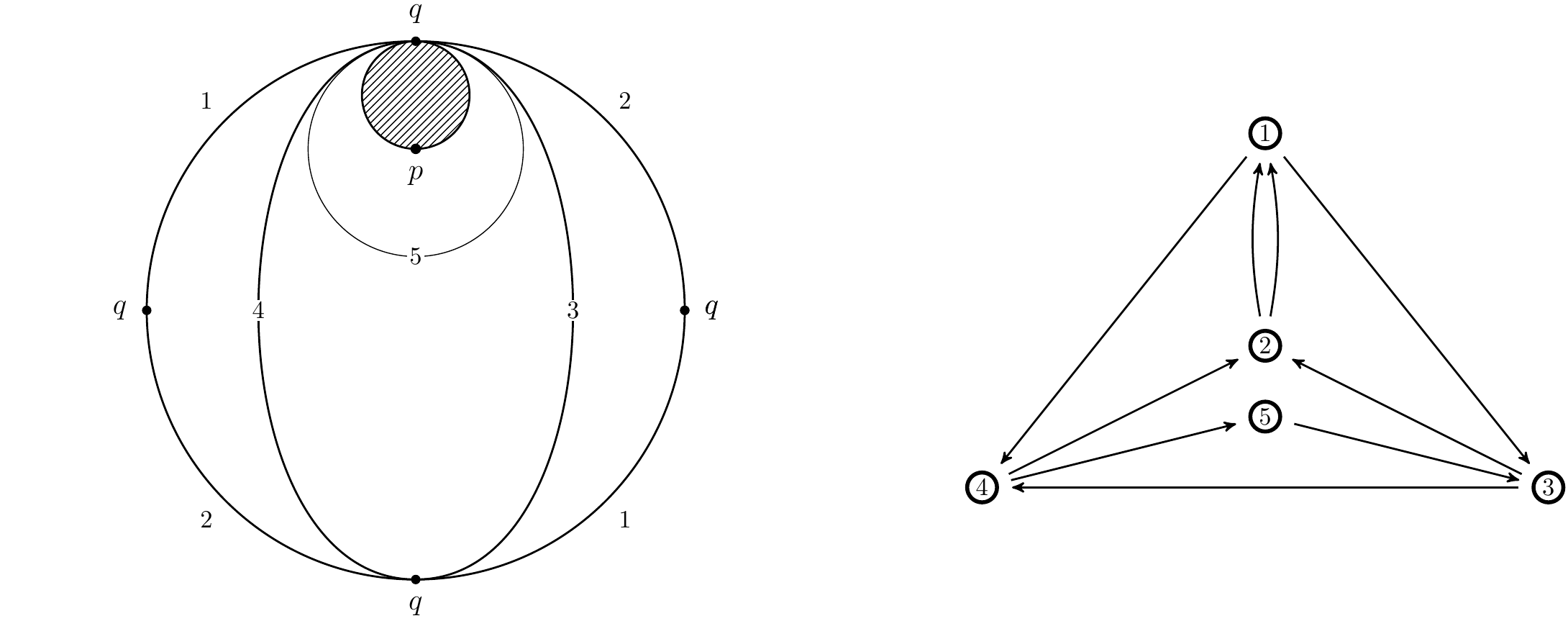}}
\caption[Freezing of a puncture-to-puncture arc]{Two quivers from marked surfaces. The freezing at vertex 6 of the top quiver produces a quiver whose mutable part is the bottom quiver.}\label{fig:two_marked_point_torus_freeze}
\end{figure}

We now show that surfaces with one marked point and one boundary component are locally-acyclic. We begin by finding coprime cluster variables.

\begin{lemma}\cite{cls}\label{lem:torus_loops}
Any non-contractible loop on a surface with one marked point and one boundary component is an element of the associated cluster algebra.
\end{lemma}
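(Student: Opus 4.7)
The strategy is to apply the skein (smoothing) relations of Theorem~\ref{thm:smooth_relation} iteratively to write $x_\gamma$ as a polynomial expression in cluster variables (i.e., in arc variables), and then use a coprimality argument to conclude membership in $\A$.

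First, I would reduce to the case of simple (i.e., non-self-intersecting) non-contractible loops. If $\gamma$ has a self-crossing at a point $x$, then applying Theorem~\ref{thm:smooth_relation} with $C=\{\gamma\}$ yields a relation
\[
x_\gamma \;=\; Y_+\, x_{C_+} \;+\; Y_-\, x_{C_-},
\]
where $C_+$ and $C_-$ are multicurves with strictly fewer total self-intersections than $\gamma$. Each component of $C_\pm$ is either an arc (so its cluster variable lies in $\A$) or a loop of strictly lower complexity. Induction on the number of self-intersections reduces the problem to simple non-contractible loops.

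Next, fix a simple non-contractible loop $\gamma$. I would pick an arc $\alpha$ based at the unique marked point $v$ so that $\alpha$ crosses $\gamma$ transversely at the minimum number of points (one point if $\gamma$ is non-separating, two points if $\gamma$ is separating). Iteratively applying the smoothing relation at these crossings yields an identity of the form
\[
x_\gamma \cdot x_\alpha^{\,r} \;=\; \sum_{i} Y_i\, x_{D_i},
\]
where each $D_i$ is a multicurve composed entirely of arcs (or, after one more round of induction on complexity, of objects already known to lie in $\A$). In either case, the right-hand side lies in $\A$, so $x_\gamma \cdot x_\alpha^{\,r} \in \A$.

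Finally, I would produce a second arc $\alpha'$ with the analogous property (by Dehn-twisting $\alpha$ about a suitable simple closed curve disjoint from $\gamma$, say), chosen so that $x_\alpha$ and $x_{\alpha'}$ are coprime in $\A$. Coprimality can be arranged via the covering-pair criterion of Lemma~\ref{lem:cover_is_coprime}: embed $\alpha$ and $\alpha'$ as adjacent arcs in a triangulation whose quiver has an arrow between the corresponding vertices not lying on any bi-infinite path. Once $(x_\alpha^{\,r},\, x_{\alpha'}^{\,r'})=\A$, we can write $1 = a\, x_\alpha^{\,r} + b\, x_{\alpha'}^{\,r'}$ with $a,b\in\A$, and then
\[
x_\gamma \;=\; a\,(x_\gamma x_\alpha^{\,r}) \;+\; b\,(x_\gamma x_{\alpha'}^{\,r'}) \;\in\; \A.
\]

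The main obstacle is the third step: producing a topologically explicit pair $(\alpha,\alpha')$ whose cluster variables are provably coprime in the ground ring of $\A^\bullet_\O$. Verifying that the arrow between the two vertices in a suitable triangulation avoids every bi-infinite path requires a combinatorial analysis of the quiver $Q_T$, and one must also check that the chosen open set $\O$ does not spoil the coprimality of the relevant coefficient monomials $Y_i$. Handling the separating case (where $\alpha$ must cross $\gamma$ at least twice, and smoothings produce auxiliary loops that feed back into the induction) is the most delicate bookkeeping.
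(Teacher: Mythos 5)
You should note at the outset that the paper does not prove this lemma at all: it is imported verbatim from \cite{cls}, so your proposal has to be judged on its own merits and against the argument there. The decisive problem is your third step. Your whole plan reduces to producing two arcs $\alpha,\alpha'$ crossing $\gamma$ suitably whose cluster variables are coprime, and you propose to certify coprimality with the covering-pair criterion of Lemma~\ref{lem:cover_is_coprime}. But surfaces with exactly one marked point are precisely the class for which that criterion is not available: the quivers of their triangulations do not offer an arrow avoiding every bi-infinite path (this is exactly why such surfaces fall outside Muller's covering-pair machinery, Lemma~\ref{thm:boundaryLA}, and why they need the separate treatment given in \cite{cls} and in Section~\ref{sec:LA_finite} of this paper), and you neither exhibit a triangulation containing such an arrow nor give a reason one should exist. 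The only coprimality statement available for these surfaces is Lemma~\ref{lem:coprime_torus}, and its proof invokes Lemma~\ref{lem:torus_loops} itself (the loop elements $x_L$, $x_{L_1}$, $x_{L_2}$ must already be known to lie in $\A$) and, moreover, only yields coprimality after inverting coefficients, i.e.\ for $\O\subset\bigcap_{\tau\in T}D_{y_\tau}$. So your route is either blocked (no covering pair) or circular (skein-based coprimality presupposes the very lemma you are proving), and even if repaired it would only give membership over a localized ground ring, which is weaker than the statement as quoted and used.

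There is a secondary gap in step 2: for a separating loop, smoothing the two crossings with $\alpha$ reintroduces closed-curve components (possibly isotopic to $\gamma$ itself), so the ``induction on complexity'' is not well-founded as stated; you flag this but do not resolve it. The missing idea that makes arguments of this kind work is to produce the loop without ever dividing by a cluster variable: smooth a product of two \emph{crossing arcs} (for instance an arc and, roughly, its image under the Dehn twist along $\gamma$), so that one resolution consists of arcs and the other contains the loop as a stand-alone component with only a coefficient monomial as cofactor. This gives an identity of the shape $x_{\gamma_1}x_{\gamma_2}=Y_1\,(\text{arc terms})+Y_2\,x_\gamma$ --- in the annulus this is the classical identity $z=x_1x_3-x_1'x_2$ --- from which $x_\gamma\in\A$ follows at once whenever $Y_2$ is a unit (automatic in the coefficient setting of \cite{cls}, and over the open sets $\O\subset\bigcap_\tau D_{y_\tau}$ this paper works with), with no coprimality, no Bezout identity, and no appeal to quiver combinatorics. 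Your proposal, which multiplies $x_\gamma$ by cluster variables and then tries to divide them back out, is structurally the harder and, as set up here, unworkable direction.
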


\begin{lemma}\label{lem:coprime_torus}
 Let  $T$ be a triangulation of a marked surface with exactly one boundary component and one marked point. For the arcs labeled $\alpha$ and $\beta$ in Figure~\ref{fig:gen_dread_torus} the cluster variables $x_\alpha$ and $x_{\beta}$ are coprime in $\A_\O^\bullet(Q_T)$ for any open set $\O \subset \spec(\ZZ(\hat Q_T))$ that is contained in the intersection $\bigcap_{\tau \in T} D_{y_\tau}$.  
\end{lemma}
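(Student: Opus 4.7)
The goal is to show that the ideal $I := (x_\alpha, x_\beta) \subseteq \A_\O^\bullet(Q_T)$ equals the whole cluster algebra. Since the ground ring $\O_Y$ is obtained by localizing at every coefficient variable $y_\tau$, each $y_\tau$ is a unit in $\A_\O^\bullet(Q_T)$, and in particular every monomial in the $y_\tau$ is a unit. It therefore suffices to exhibit a single identity
\[
A \cdot x_\alpha + B \cdot x_\beta = Y
\]
with $A, B \in \A_\O^\bullet(Q_T)$ and $Y$ a monomial in the $y_\tau$.

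The main tools will be Theorem~\ref{thm:smooth_relation}, which attaches to every crossing or self-crossing of a (multi)curve on the surface a skein-style relation $x_C = Y_1 x_{C_+} + Y_2 x_{C_-}$ with $Y_1, Y_2$ monomials in the $y_\tau$, together with Lemma~\ref{lem:torus_loops}, which guarantees that every non-contractible loop lies in $\A_\O^\bullet(Q_T)$ and is therefore available as an input to Theorem~\ref{thm:smooth_relation}. The plan is to pick a non-contractible loop $\ell$ on the surface pictured in Figure~\ref{fig:gen_dread_torus} whose self-intersection (or whose intersection with some chosen arc of $T$) has the property that the two smoothings produce multicurves $C_+$ and $C_-$, one containing $\alpha$ as a component and the other containing $\beta$ as a component. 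Applying Theorem~\ref{thm:smooth_relation} to this crossing then yields an identity of the shape
\[
x_\ell \;=\; Y_1\, x_\alpha\, x_{\eta_1} \;+\; Y_2\, x_\beta\, x_{\eta_2}
\]
for certain auxiliary curves $\eta_1, \eta_2$, placing $x_\ell$ inside $I$.

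To upgrade this to the desired monomial identity, I would produce a second smoothing relation---for instance, by smoothing a crossing between $\ell$ and an arc of $T$, or by smoothing a different self-crossing of $\ell$---that expresses $x_\ell$ again as a sum with both summands in $I$ but now arranged so that the factors $x_{\eta_1}, x_{\eta_2}$ can be matched against those from the first relation. Eliminating $x_\ell$ (and the common $x_{\eta_i}$) between the two resulting identities should produce a relation $A \cdot x_\alpha + B \cdot x_\beta = Y$ with $Y$ a nonzero monomial in the $y_\tau$, which is invertible in $\O_Y$ by the standing hypothesis on $\O$. This forces $I = \A_\O^\bullet(Q_T)$, proving that $x_\alpha$ and $x_\beta$ are coprime.

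The main obstacle is the combinatorial step of choosing $\ell$ and the second smoothing so that the elimination really does collapse to a monomial on the right-hand side: a naive application of Theorem~\ref{thm:smooth_relation} only places a \emph{product} of cluster variables into $I$, and the hypothesis $\O \subseteq \bigcap_{\tau \in T} D_{y_\tau}$ is needed precisely to license dividing through by the extraneous $y$-monomials that appear during the elimination. Pinning down the correct loop $\ell$ requires a careful inspection of the generalized dreadlock triangulation $T$ shown in Figure~\ref{fig:gen_dread_torus}, and the resulting construction should parallel the approach of Canakci, Lee, and Schiffler \cite{cls} in the once-punctured one-boundary-component case, where loops built out of the handles of the surface are smoothed to produce exactly such symmetric cancellation between two chosen arcs.
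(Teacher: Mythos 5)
Your high-level strategy coincides with the paper's: combine the skein relations of Theorem~\ref{thm:smooth_relation} with Lemma~\ref{lem:torus_loops} so as to place a pure monomial in the $y_\tau$ inside the ideal $(x_\alpha,x_\beta)$, and then use the hypothesis $\O\subseteq\bigcap_{\tau\in T}D_{y_\tau}$ to conclude that this monomial is a unit in $\O_Y$, forcing the ideal to be everything. The problem is that your proposal stops exactly where the actual work begins. You never exhibit the loop $\ell$, the auxiliary curves, or the two concrete smoothing relations, and you explicitly defer the verification that the elimination ``collapses to a monomial'' --- but that verification is the entire content of the lemma. In the paper the relevant curves are the specific loops $L$, $L_1$, $L_2$ and $\Gamma$ of Figure~\ref{fig:gen_dread_torus}: smoothing $\alpha$ with $L$ twice gives $x_\alpha x_L = Y_1 x_\beta + Y_2 x_{L_1} + Y_3 x_\Gamma$, and smoothing $\beta$ with $\Gamma$ gives $x_\beta x_\Gamma = Y_4 x_\alpha^2 + Y_5 x_\alpha x_{L_2} + Y_6$. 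The crucial geometric fact, which your sketch does not establish, is that the second smoothing produces a term $Y_6$ carrying \emph{no} cluster-variable factor at all. Multiplying the first identity by $x_\beta$ and substituting yields $x_\alpha x_\beta x_L = Y_1 x_\beta^2 + Y_2 x_\beta x_{L_1} + Y_3Y_4 x_\alpha^2 + Y_3Y_5 x_\alpha x_{L_2} + Y_3Y_6$, whence $Y_3Y_6\in(x_\alpha,x_\beta)$, because every other term has an $x_\alpha$ or $x_\beta$ factor whose cofactor lies in $\A_\O^\bullet(Q_T)$ --- this last point is where Lemma~\ref{lem:torus_loops} is genuinely used, to know $x_L, x_{L_1}, x_{L_2}$ are in the algebra.

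Two further cautions. First, your displayed relation $x_\ell = Y_1 x_\alpha x_{\eta_1} + Y_2 x_\beta x_{\eta_2}$ only places $x_\ell$ in the ideal if the cofactors $x_{\eta_i}$ are known to lie in $\A_\O^\bullet(Q_T)$; for arcs this is automatic, but for loops you must again invoke Lemma~\ref{lem:torus_loops}. Second, and more importantly, producing a second relation and ``eliminating'' does not by itself yield a monomial right-hand side: generically the elimination leaves residual cluster-variable terms, and one needs a configuration in which one of the smoothings consists entirely of components contributing only coefficient monomials (the paper's $\beta\cup\Gamma$). Identifying such a configuration on the genus-$g$ surface with one boundary component and one marked point is the missing idea, so as written your argument is a plan rather than a proof.
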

\begin{proof} Let $I \subset \A_\O^\bullet(Q_T)$ be the ideal generated by $x_\alpha$ and $x_\beta$. 
Let $L, L_1, L_2$ and $\Gamma$ be the arcs or non-contractible loops shown in Figure~\ref{fig:gen_dread_torus}. By applying the smoothing operation twice to $\alpha$ and $L$ starting at the uppermost crossing and then applying Theorem~\ref{thm:smooth_relation} we see that 
\begin{equation}\label{eq:skein1}
x_\alpha x_L = Y_1 x_\beta + Y_2 x_{L_1} + Y_3 x_\Gamma .
\end{equation}
Now applying the smoothing operation to $\beta$ and $\Gamma$ and using Theorem~\ref{thm:smooth_relation} yields
\begin{equation}\label{eq:skein2}
x_\beta x_\Gamma = Y_4 x_\alpha^2 + Y_5 x_\alpha x_{L_2} + Y_6
\end{equation}
Recall the $Y_i$ are monomials in $\{y_\tau\}_{\tau \in T}$.
Using Equation~(\ref{eq:skein1}) and Equation~(\ref{eq:skein2}) we derive the relation
$$x_\alpha x_{\beta} x_L = Y_1 x_\beta^2 + Y_2 x_\beta x_{L_1} +Y_3Y_4 x_\alpha^2 + Y_3Y_5 x_\alpha x_{L_2} +Y_3 Y_6.$$
Since $L,L_1,$ and $L_2$ are in $\A_\O^\bullet(Q_T)$ by Lemma~\ref{lem:torus_loops} we see that $Y_3Y_6 \in I$. However, $Y_3Y_6$ is a unit in $\O_Y$ since $\O \subset\bigcap_{\tau \in T} D_{y_\tau}$ and thus $I = \A_\O^\bullet(Q_T)$. Therefore $x_\alpha$ and $x_\beta$ are coprime. 
\end{proof}

\begin{figure}
\includegraphics[scale=.7]{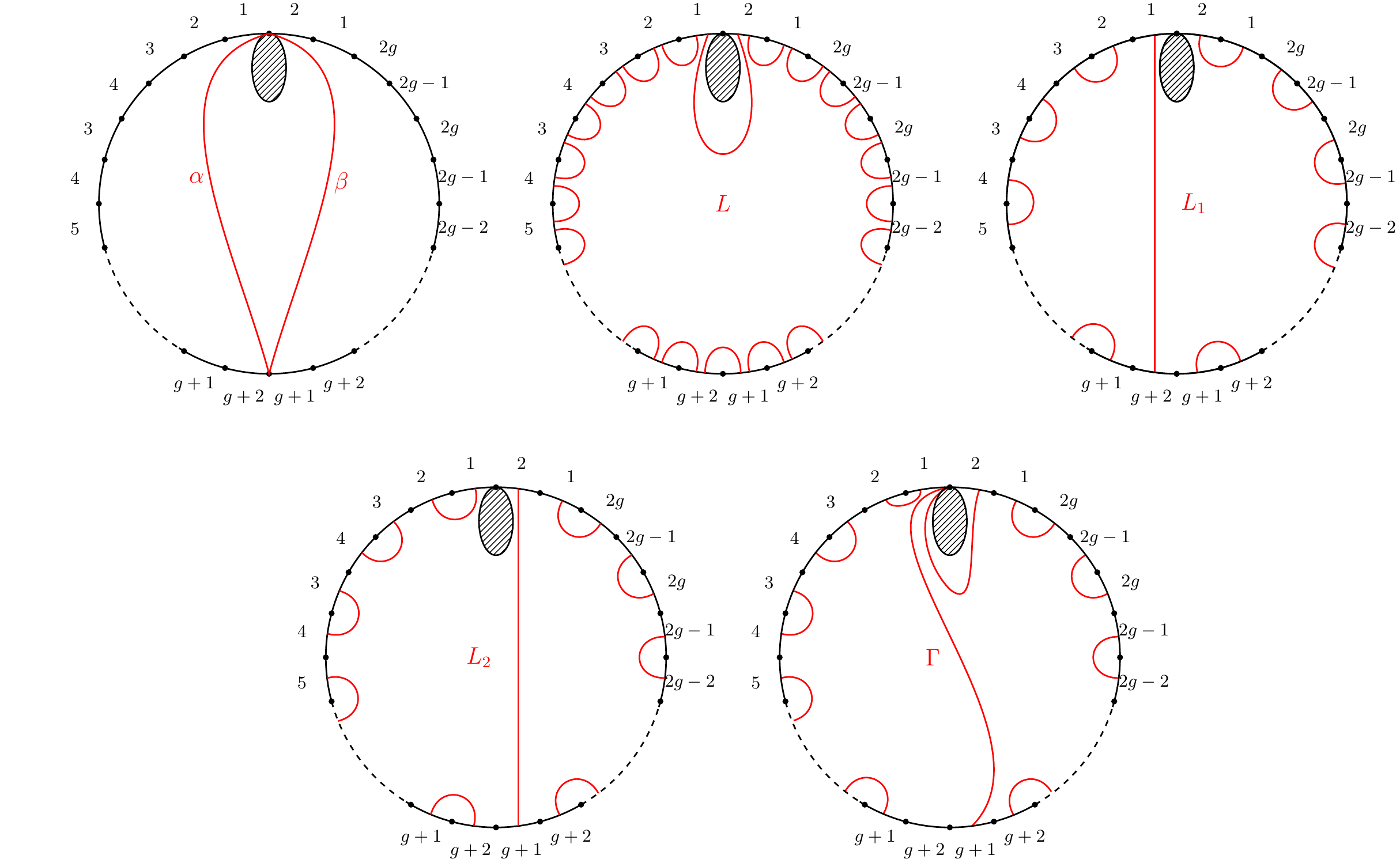}
\caption[Arcs and loops used in Lemma~\ref{lem:coprime_torus}]{Arcs and non-contractible loops for a genus $g$ marked surface with one boundary component and one marked point.}\label{fig:gen_dread_torus}
\end{figure}

%\begin{figure}
%\caption[Some skein relations]{Smoothing operations}\label{fig:skein_LA}
%\end{figure}

\begin{theorem}\label{thm:LA_1punc_1bound}
 Let  $T$ be a triangulation of a marked surface with exactly one boundary component and one marked point containing the arcs $\alpha$ and $\beta$. The cluster algebra $\A_\O^\bullet(Q_T)$ is locally-acyclic for any open set $\O \subset \spec(\ZZ(\hat Q_T))$ that is contained in the intersection $\bigcap_{\tau \in T} D_{y_\tau}$.  
\end{theorem}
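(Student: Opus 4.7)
The plan is to mirror the strategy used in the proof of Theorem~\ref{thm:LA_closed_surface}. Lemma~\ref{lem:coprime_torus} already supplies the essential algebraic input: on the open set $\O \subset \bigcap_{\tau \in T} D_{y_\tau}$, the two cluster variables $x_\alpha$ and $x_\beta$ associated to the arcs $\alpha$ and $\beta$ from Figure~\ref{fig:gen_dread_torus} are coprime, and both live in the seed corresponding to $T$. With coprimality in hand, the route to local-acyclicity is to freeze at these two cluster variables separately and show that each freezing produces a cluster algebra which is locally-acyclic by a result already on the table; Lemma~\ref{lem:short_LA} will then glue the two pieces together.

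The key geometric step is to identify the ice quivers obtained from the two freezings with triangulated surfaces. Freezing $x_\alpha$ in $\A_\O^\bullet(Q_T)$ corresponds, under the surface/cluster dictionary, to cutting $\Sigma$ open along the arc $\alpha$: this operation removes $\alpha$ from the set of mutable arcs and effectively doubles the boundary data along $\alpha$. First I would verify in the surface picture that cutting along $\alpha$ yields a marked surface $\Sigma_\alpha$ which still has non-empty boundary and whose total number of marked points is at least two (the single marked point of $\Sigma$ splits into two marked points on the new boundary, and the original boundary component is still present). The same analysis for $\beta$ gives another cut surface $\Sigma_\beta$ with the same qualitative properties. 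Thus, writing $\A^\dagger_\O$ and $\A^\ddagger_\O$ for the two freezings, each is isomorphic (as a cluster algebra with principal coefficients over $\O^\dagger_Y$, respectively $\O^\ddagger_Y$) to the cluster algebra of a triangulated surface satisfying the hypotheses of Theorem~\ref{thm:boundaryLA}, and is therefore locally-acyclic.

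With both $\A^\dagger_\O$ and $\A^\ddagger_\O$ locally-acyclic and $x_\alpha, x_\beta$ coprime, Lemma~\ref{lem:short_LA} immediately yields the local-acyclicity of $\A_\O^\bullet(Q_T)$. The only subtle point to check is that the open set $\O \subseteq \bigcap_{\tau \in T} D_{y_\tau}$ is preserved (or appropriately enlarged to $\O^\dagger$, $\O^\ddagger$) when passing to the freezings, which follows from the definition of the enriched ground ring $\O_Y^\dagger \cong \O_Y[x_\alpha^{\pm 1}]$ and the compatibility noted right after the freezing definition.

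The main obstacle I expect is the geometric verification that cutting along $\alpha$ (and separately along $\beta$) really does produce a surface satisfying the hypotheses of Theorem~\ref{thm:boundaryLA}: one must check that the cut surface remains connected (or, if it splits, that each component has non-empty boundary and enough marked points), and that the induced quiver is indeed the triangulation quiver of the cut surface with the corresponding arc frozen. This essentially reduces to inspecting Figure~\ref{fig:gen_dread_torus} and the local combinatorics of $\alpha$ and $\beta$ near the unique marked point, after which the rest of the argument is a direct application of the lemmas already assembled.
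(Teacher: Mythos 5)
Your proposal follows essentially the same route as the paper's proof: coprimality of $x_\alpha$ and $x_\beta$ from Lemma~\ref{lem:coprime_torus}, freezing at each variable and identifying the result with the cluster algebra of the cut surface, which has non-empty boundary and at least two marked points and is therefore locally-acyclic by Theorem~\ref{thm:boundaryLA}, and finally gluing with Lemma~\ref{lem:short_LA}. The only difference is in the description of the cut surface---the paper identifies it precisely as a surface of genus one less with two boundary components carrying one and two marked points, and handles $\beta$ by the symmetry of $\alpha$ and $\beta$---which is exactly the geometric verification you flagged and poses no obstacle.
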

\begin{proof}
By Lemma~\ref{lem:coprime_torus} we have that $x_\alpha$ and $x_\beta$ are coprime and they both are elements of the same cluster that is associated with $T$. Let $\A^\dagger_\O$ be the cluster algebra obtained from the freezing of $x_\alpha$ and $\A^\ddagger_\O$ the cluster algebra obtained from the freezing of $x_\beta$. The cluster algebra $\A_\O^\dagger$ arises from the surface of genus one less than $\Sigma$ with two boundary components. One of the boundary components has one marked point and the other has two. Therefore $\A^\dagger_\O$ is locally-acyclic by Theorem~\ref{thm:boundaryLA}. By the symmetry of $\alpha$ and $\beta$ we see that the two freezings form isomorphic quivers so $\A^\ddagger_\O$ is also locally-acyclic. It follows from Lemma~\ref{lem:short_LA} that $\A_\O^\bullet$ is locally-acyclic. 
\end{proof}

We have finally proven for which cluster algebras from mutation-finite quivers are locally-acyclic cluster algebras. We finish with the proof of Theorem~\ref{thm:big1}.
\begin{proof}[Proof of Theorem~\ref{thm:big1}]
All cases for quivers of rank at least three follow immediately from Theorem~\ref{thm:LA_exceptional},  Theorem~\ref{thm:boundaryLA}, Theorem~\ref{thm:LA_closed_surface},  and Theorem~\ref{thm:LA_1punc_1bound}. In the rank 2 case we have that our quiver is acyclic so its cluster algebra is locally-acyclic by Theorem~\ref{thm:acyclic_LA}. The second claim is just restating a part of Theorem~\ref{thm:LA_exceptional} and Lemma~\ref{lem:1punc_no_au}.
\end{proof}

\section{Counterexample  to Conjecture~\ref{conj:false}}\label{sec:counterexample}

In this section we give an example of a quiver which is locally-acyclic and whose cluster algebra equals the upper cluster algebra, but is can not be mutated to any quiver with a maximal green sequence. We also show that every quiver in the mutation class has a green-to-red sequence. This provides a counterexample to Conjecture~\ref{conj:false} and evidence for Conjecture~\ref{conj:big}. 

We begin by showing that $\A(Q_{ce})$ is locally-acyclic. 

\begin{lemma}\label{lem:ce_1}
The cluster algebra $\A(Q_{ce})$ corresponding to the quiver $Q_{ce}$ from Figure~\ref{fig:counterexample} is locally-acyclic. 
\end{lemma}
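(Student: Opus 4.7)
The plan is to apply Lemma~\ref{lem:short_LA} to the initial seed of $\A(Q_{ce})$, using the pair $(x_1, x_2)$. I will verify coprimality via the covering-pair criterion and then show that the two resulting freezings are acyclic, hence locally-acyclic by Theorem~\ref{thm:acyclic_LA}.

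For coprimality, observe that vertex $1$ is a source of $Q_{ce}$: every arrow incident to $1$ in Figure~\ref{fig:counterexample} points outward. In particular no cycle lies upstream of the arrow $1 \to 2$, so this arrow is in no bi-infinite path. Thus $(x_1, x_2)$ is a covering pair and Lemma~\ref{lem:cover_is_coprime} gives that $x_1$ and $x_2$ are coprime.

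Next I would analyze the two freezings. The freezing at $x_2$ leaves mutable part on $\{1,3,4\}$ with arrows $1 \to 3$, $1 \to 4$, $4 \to 3$, which is manifestly acyclic. The freezing at $x_1$ leaves mutable part on $\{2,3,4\}$ forming a $3$-cycle with multiplicities $2 \to 4$ ($2$), $4 \to 3$ ($2$), $3 \to 2$ ($6$). A single mutation at vertex $4$ introduces $2 \cdot 2 = 4$ new arrows $2 \to 3$ from the $2$-path $2 \to 4 \to 3$; these partially cancel the six existing $3 \to 2$ arrows, leaving $3 \to 2$ of multiplicity $2$. The reversed arrows at $4$ are $4 \to 2$ and $3 \to 4$, each of multiplicity $2$, and the incoming/outgoing arrows to the frozen vertex $1$ do not affect acyclicity of the mutable part. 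The resulting mutable part is acyclic with source $3$ and sink $2$, so this freezing is also acyclic.

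Both freezings are then locally-acyclic by Theorem~\ref{thm:acyclic_LA}, and together with the coprimality of $x_1$ and $x_2$, Lemma~\ref{lem:short_LA} concludes that $\A(Q_{ce})$ is locally-acyclic. The only step with real content is the cancellation in the mutation at vertex $4$: the multiplicity $6$ on $3 \to 2$ is strictly greater than $2 \cdot 2 = 4$, so the cancellation leaves surviving arrows in the \emph{original} direction, which is exactly what preserves acyclicity. If the multiplicity were, say, $3$ instead of $6$, the surviving arrows would point $2 \to 3$ and the mutated quiver would still contain a $3$-cycle, so this numerical fact is the key feature of $Q_{ce}$ making the argument run.
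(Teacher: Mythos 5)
Your proposal is correct and takes essentially the same route as the paper: coprimality of $x_1$ and $x_2$ via the covering-pair criterion at the source vertex $1$ (Lemma~\ref{lem:cover_is_coprime}), acyclicity of the two freezings, and then Theorem~\ref{thm:acyclic_LA} together with Lemma~\ref{lem:short_LA}. The only difference is that the paper delegates the acyclicity of the freezings to Figure~\ref{fig:ce_is_la}, whereas you verify it explicitly by mutating the frozen-at-$x_1$ quiver at vertex $4$, and your cancellation count ($6 > 2\cdot 2$) is accurate.
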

\begin{proof}
Since vertex 1 is a source vertex in the quiver we have that the cluster variables $x_1$ and $x_2$ form a covering pair and hence are coprime by Lemma~\ref{lem:cover_is_coprime}. In Figure~\ref{fig:ce_is_la} we see that both the freezing of $x_1$ and the freezing of $x_2$ produce acyclic cluster algebras. By Theorem~\ref{thm:acyclic_LA} we know that acyclic cluster algebras are locally-acyclic so by Lemma~\ref{lem:short_LA} we have the desired result. 
\end{proof}
\begin{figure}
\includegraphics[scale=.8]{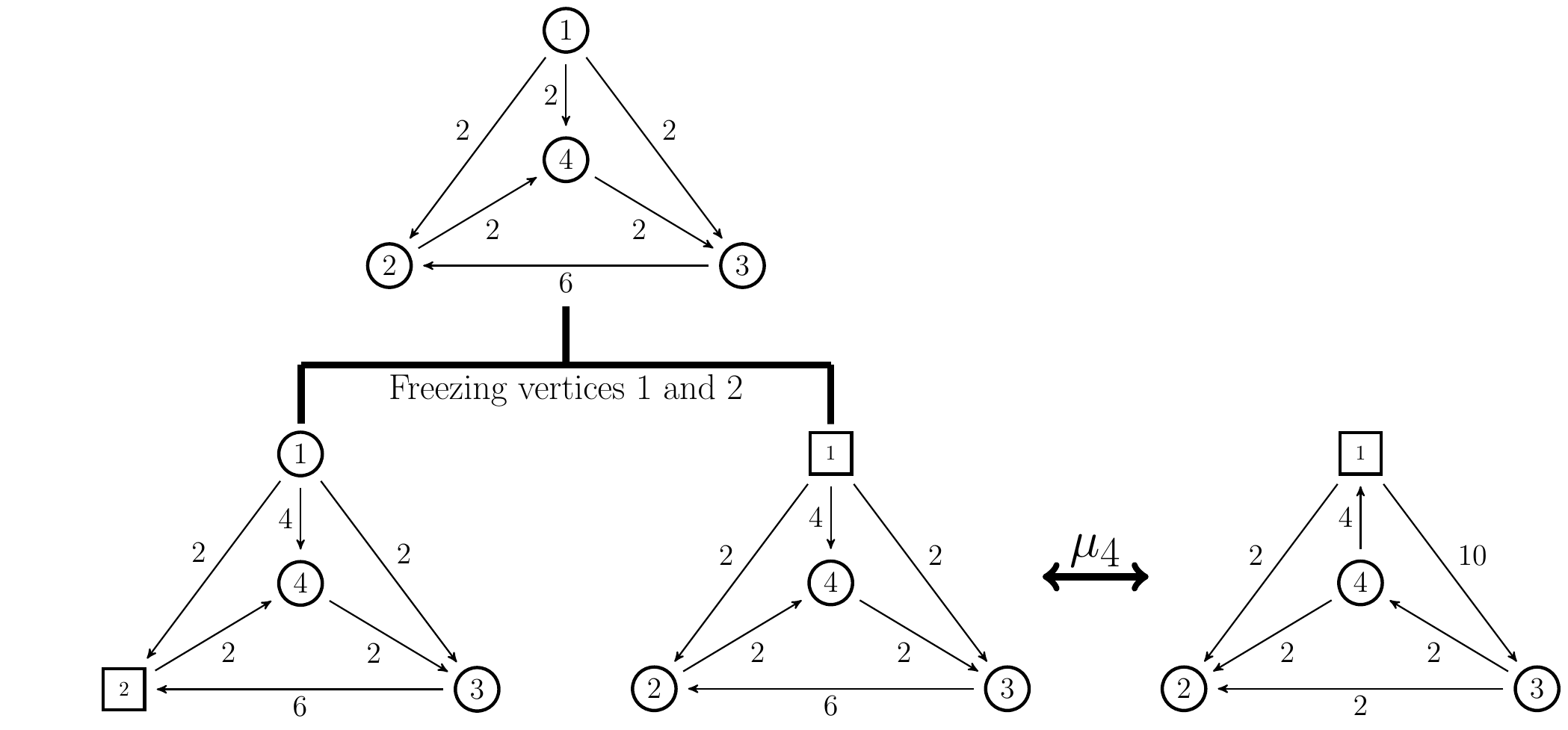}
\caption{The freezing of $Q_{ce}$ at vertices 1 and 2.}\label{fig:ce_is_la}
\end{figure}

We now work towards the claims about maximal green sequences and green-to-red sequences.

\begin{theorem}\cite[Theorem 4]{Brustle2017}\label{thm:bad_head}
At each step in a maximal green sequence the mutation occurs at a vertex which is not the head of a multiple arrow. 
\end{theorem}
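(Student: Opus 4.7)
The plan is to argue by contradiction. Suppose some step of a maximal green sequence mutates at a green vertex $k$ in the current quiver $R \in \mut(\hat{Q})$, and $k$ is the head of a multiple arrow — that is, there exists $j$ with $m := b_{kj} \geq 2$ arrows $j \to k$ in $R$. I would encode the state by the $c$-matrix whose columns $c_i$ record the signed counts of arrows between each mutable vertex $i$ and the frozen vertices of $R$. By Theorem~\ref{thm:signcoh} each $c_i$ is either non-negative (green) or non-positive (red); in particular $c_k \geq 0$ at the step in question.

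Next I would apply the standard $c$-vector mutation rule. Since $c_k \geq 0$, mutating at $k$ sends $c_k \mapsto -c_k$ and, crucially, $c_j \mapsto c_j + m\,c_k$. The coefficient $m \geq 2$ is the arithmetic feature that distinguishes this mutation from those performed at vertices which are heads only of simple arrows. I would then exploit the structural fact that a maximal green sequence must terminate with $c$-matrix equal to $-P$ for some permutation matrix $P$, so that every $c$-vector eventually becomes a negative simple basis vector.

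The main step is to define a monotonic invariant on configurations of $c$-matrices — for example the sum of positive entries, or a suitable $\ell_1$-type norm of the columns — that strictly increases whenever a mutation occurs at the head of a multiple arrow and that cannot be restored to a lower value by subsequent valid mutations in a green sequence. Since the terminal state $-P$ has a fixed minimal value of this invariant, reaching it from the post-mutation configuration would be impossible, yielding the desired contradiction. The main obstacle is constructing a genuinely monotonic invariant: ordinary $c$-vector mutations can decrease column norms in complicated ways, so one must either tailor the invariant carefully or restrict the argument to the subclass of mutations that actually appear in maximal green sequences. A cleaner alternative would be to pass through the $\tau$-tilting and brick interpretation of maximal green sequences: at each step, the mutated $c$-vector should be the dimension vector of a brick in the Jacobian algebra of $R$ with a generic potential, and a multiple arrow $j \rightrightarrows k$ forces a non-trivial endomorphism on the candidate module, preventing it from being a brick. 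The hard part of this alternate route is setting up the representation-theoretic correspondence precisely enough to apply in the required generality.
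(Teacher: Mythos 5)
You should first note that the paper does not prove this statement at all: it is imported verbatim as \cite[Theorem 4]{Brustle2017} and used as a black box in Corollary~\ref{cor:bad_cycle}, so there is no in-paper argument to compare against; your attempt has to stand on its own against the literature proof, which goes through representation-theoretic/stability machinery rather than elementary $c$-matrix counting.

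Judged on its own, your proposal has a genuine gap at exactly the point you flag yourself: the monotonic invariant is never constructed, and the candidates you name (sum of positive entries, an $\ell_1$-type norm of the columns) are not monotone along green sequences. The green mutation at $k$ itself replaces $c_k$ by $-c_k$, which already decreases any positive-part count, and subsequent green mutations can both raise and lower such norms; nothing in your setup isolates what the multiplicity $m\ge 2$ does that $m=1$ does not, beyond making the single update $c_j\mapsto c_j+m\,c_k$ larger. The assertion that the resulting configuration ``cannot be restored to a lower value by subsequent valid mutations'' is precisely the content of the theorem, so as written the first route is circular rather than incomplete-but-repairable. Your alternative route is much closer to how the result is actually established in the literature (the $c$-vector mutated at each step of a green sequence is the dimension vector of a brick of the Jacobian algebra, and a double arrow into the mutated vertex obstructs brickness), but the two substantive steps --- proving that every mutated $c$-vector along a maximal green sequence is a brick dimension vector, and proving that a multiple arrow $j\rightrightarrows k$ forces a noninvertible nonzero endomorphism of that specific module --- are exactly the parts you leave as acknowledged obstacles. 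So the submission is a plausible research plan pointing at the right circle of ideas, not a proof; to complete it you would either have to cite the brick/$\tau$-tilting results explicitly (at which point you are essentially re-citing \cite{Brustle2017} and its sources) or supply those arguments in full.
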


\begin{theorem}{\cite[Lemma 1.4.1]{muller}}\label{thm:subquiver}
If a quiver admits a maximal green sequence, then any induced subquiver admits a maximal green sequence. 
\end{theorem}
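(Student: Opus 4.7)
My plan is to proceed by induction on $|V \setminus V'|$, reducing the problem to the case of removing a single vertex $v$ from $V$, so that $Q' := Q|_{V\setminus\{v\}}$. Given a maximal green sequence $\mathbf{i} = (i_1,\dotsc,i_l)$ for $Q$, the natural candidate for a maximal green sequence of $Q'$ is the subsequence $\mathbf{i}'$ obtained by deleting every $i_k$ equal to $v$.

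To verify that $\mathbf{i}'$ is a maximal green sequence of $\hat{Q'}$, I would run a parallel inductive comparison of the intermediate framed quivers. Writing $Q^{(k)} = \mu_{i_1\cdots i_k}(\hat Q)$ and $Q'^{(k')} = \mu_{\mathbf{i}'(k)}(\hat{Q'})$, where $\mathbf{i}'(k)$ denotes the subsequence of $(i_1,\dotsc,i_k)$ lying in $V'$, the goal is to show by induction on $k$ that for each $j \in V'$, the vertex $j$ is green in $Q^{(k)}$ if and only if it is green in $Q'^{(k')}$. Once this equivalence is established for all $k$, the conclusion is immediate: each mutation in $\mathbf{i}'$ occurs at a green vertex of the current $\hat{Q'}$-state (since the corresponding mutation in $\mathbf{i}$ does), and every vertex of $\hat{Q'}$ ends red (since every mutable vertex of $\hat{Q}$ is red at the end of the maximal green sequence $\mathbf{i}$).

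The main obstacle is that $Q^{(k)}$ restricted to $V' \cup V'^*$ and $Q'^{(k')}$ are genuinely distinct ice quivers: mutations at $v$ in $\mathbf{i}$ can create or destroy arrows of the form $j^* \to i$ or $i \to j^*$ (with $j, i \in V'$) through $2$-paths $j^* \to v \to i$ or $i \to v \to j^*$, and these arrows are absent from the $\hat{Q'}$-side trajectory. I would argue that, despite this discrepancy in multiplicities, whether any vertex $i \in V'$ has an incoming arrow from $V'^*$ is the same on the two sides, using the dichotomy provided by Theorem~\ref{thm:signcoh}: each creation of an arrow $j^* \to i$ by a $v$-mutation is triggered by a preexisting arrow $j^* \to v$, and the presence of that arrow means $v$ had become red in $Q^{(k)}$, which by sign coherence forces all arrows between $V^*$ and $v$ to point uniformly, so that the new arrows introduced at $i$ align in sign with the arrows already present in the $\hat{Q'}$ trajectory. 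Carrying out this bookkeeping across the entire sequence, including tracking the cancellations produced by $2$-cycle removal, is the hardest technical step; a cleaner alternative would be to interpret the maximal green sequence via a maximal chain of torsion classes in the module category of the Jacobian algebra of $Q$ and observe that the quotient by the idempotent at $v$ restricts such chains cleanly, giving the restricted maximal green sequence for $Q'$ almost for free.
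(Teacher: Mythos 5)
The paper gives no proof of this statement --- it is quoted directly from Muller --- so the only question is whether your argument is correct, and its central claim is not. The restriction of a maximal green sequence of $Q$ to the vertices of an induced subquiver need not be a green sequence of that subquiver, let alone maximal. Concretely, let $Q$ be the $A_2$ quiver $1 \to 2$ and delete $v=1$, so $Q'$ is the single vertex $2$. The sequence $(2,1,2)$ is a maximal green sequence for $Q$: after $\mu_2$ the framed quiver has arrows $2\to 1$, $2^*\to 2$, $1\to 1^*$, $1\to 2^*$; the mutation at $1$ then creates $2\to 2^*$ via the $2$-path $2\to 1\to 2^*$, which cancels against the existing $2^*\to 2$, so that afterwards the only frozen arrow at vertex $2$ is $2\to 1^*$ and vertex $2$ is \emph{green} again; a final $\mu_2$ turns both vertices red. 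On the subquiver side your candidate sequence is $(2,2)$, but after one mutation of the one-vertex framed quiver the vertex $2$ is red, so the second mutation is not at a green vertex and the sequence terminates with $2$ green. Thus vertex $2$ is green in $Q^{(2)}$ but red in $Q'^{(1)}$: the color equivalence you propose to prove by induction on $k$ already fails at the first mutation at the deleted vertex, precisely because a mutation at $v$ can create or cancel frozen arrows at surviving vertices. Sign coherence (Theorem~\ref{thm:signcoh}) guarantees each vertex is red or green, but it does not prevent a $v$-mutation from flipping the color of a vertex of $V'$, so the bookkeeping you outline cannot be completed; no correct proof can rest on naively restricting the mutation sequence, which is the step your whole plan depends on.

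Your fallback suggestion via torsion classes is much closer to a viable route, but as written it is a pointer rather than a proof: one must choose a nondegenerate potential, handle the possibly infinite-dimensional Jacobian algebra, invoke the precise dictionary between maximal green sequences and finite maximal chains of functorially finite torsion classes, and then prove that passing to the quotient by the idempotent ideal at $v$ sends such maximal chains to maximal chains (after deleting repetitions). None of this is ``almost for free,'' and it is this kind of argument --- or Muller's own combinatorial one --- that a complete proof would have to carry out in detail.
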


Recall that $Q_1$ denotes the set of arrows of a quiver $Q$. 

\begin{corollary}\label{cor:bad_cycle}
If the vertices $i_1, \dots, i_\ell$ of a quiver $Q$ form an oriented cycle and\\ $\left|\{a \in Q_1 |  a = i_j\rightarrow i_{j+1} \}\right| \geq 2$ for $j = 1,\dots,\ell$ then $Q$ has no maximal green sequence.  
\end{corollary}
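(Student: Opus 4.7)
The plan is to combine Theorem~\ref{thm:bad_head} with the contrapositive of Theorem~\ref{thm:subquiver}. First, I would let $Q'$ be the induced subquiver of $Q$ on the vertex set $\{i_1,\dots,i_\ell\}$. By Theorem~\ref{thm:subquiver}, to show $Q$ has no maximal green sequence it is enough to show that $Q'$ has none, so the rest of the argument takes place inside $Q'$ (and its framed version $\hat{Q'}$).

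Next, I would observe that $Q'$ inherits the entire oriented cycle $i_1 \to i_2 \to \cdots \to i_\ell \to i_1$, and by hypothesis every one of these arrows has multiplicity at least two. In particular, in $Q'$ every vertex $i_j$ is the head of the multi-arrow $i_{j-1} \to i_j$. Passing to $\hat{Q'}$ does not change this, since the framing only adds outgoing single arrows $i_j \to i_j^*$ and contributes no new incoming arrows at any $i_j$. Thus in $\hat{Q'}$ every mutable vertex is the head of a multi-arrow.

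Finally, I would invoke Theorem~\ref{thm:bad_head}: the first mutation of any maximal green sequence must be at a mutable vertex that is not the head of a multi-arrow. Since no such vertex exists in $\hat{Q'}$, the sequence cannot even begin, so $Q'$ admits no maximal green sequence. Combining with the reduction in the first step gives the result for $Q$.

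There is no real obstacle here beyond being careful that framing preserves the multi-arrow structure on the cycle; everything else is a direct application of the two cited theorems.
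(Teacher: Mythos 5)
Your proposal is correct and follows essentially the same route as the paper: restrict to the induced subquiver on the cycle, note every vertex there is the head of a multiple arrow, apply Theorem~\ref{thm:bad_head} to rule out any (first) mutation in a maximal green sequence, and conclude via Theorem~\ref{thm:subquiver}. Your extra remarks about the framing adding only outgoing arrows and the sequence being unable to begin simply make explicit what the paper leaves implicit.
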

\begin{proof}
Note the induced subquiver of $Q$ on vertices $i_1, \dots, i_\ell$ has no maximal green sequence by Theorem~\ref{thm:bad_head} since every vertex is the head of a multiple arrow. The claim then follows from Theorem~\ref{thm:subquiver}. 
\end{proof}

The following observation was found in lecture notes of Philipp Lampe \cite{lampeLecture}. He attributes the result to Jan Schr\"oer. The proof is straightforward from the definition of mutation and can be found \emph{loc. cit.}
\begin{lemma}\cite[Exercise 2.4]{lampeLecture}\label{lem:lampe}
The greatest common divisor of the entries in a column of an exchange matrix $B = (b_{ij})$ is invariant under mutation. That is $$\gcd(b_{ij} : i \in [n]) = \gcd(b_{ij}':i \in [n]),$$ for $\mu_k(B) = B' = (b_{ij}')$ with $k \in [n].$
\end{lemma}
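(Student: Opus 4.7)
Fix a column index $j$ and let $d = \gcd(b_{ij} : i \in [n])$ and $d' = \gcd(b_{ij}' : i \in [n])$. The plan is to show $d \mid d'$ directly from the mutation rule; equality then follows because $\mu_k$ is an involution, so the symmetric argument applied to $B' = \mu_k(B)$ gives $d' \mid d$.

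The case $j = k$ is immediate: the mutation rule gives $b_{ik}' = -b_{ik}$ for every $i$, so the two columns differ only by sign and $d' = d$. For the remaining case $j \neq k$, I would split according to the row index. In row $i = k$ the mutation rule again gives $b_{kj}' = -b_{kj}$, so $d$ trivially divides this entry. For $i \neq k$ the exchange relation reads
\[
b_{ij}' \;=\; b_{ij} + \frac{|b_{ik}|\,b_{kj} + b_{ik}\,|b_{kj}|}{2}.
\]
A short case check on the signs of $b_{ik}$ and $b_{kj}$ shows that the second summand equals $0$, $b_{ik}b_{kj}$, or $-b_{ik}b_{kj}$; in particular it is always an integer multiple of $b_{kj}$. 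Since $d \mid b_{ij}$ and $d \mid b_{kj}$, we conclude $d \mid b_{ij}'$. Hence $d$ divides every entry of the $j$-th column of $B'$, so $d \mid d'$.

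Applying the same reasoning to $B'$ together with $\mu_k$ (and using $\mu_k(B') = B$) yields $d' \mid d$, so $d = d'$ and the column gcd is preserved. There is no real obstacle here; the only point that requires a moment's care is verifying the sign-case identity showing that the correction term is an integer multiple of $b_{kj}$, which is routine.
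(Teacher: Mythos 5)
Your argument is correct: the sign-case check does show the correction term $\tfrac{|b_{ik}|b_{kj}+b_{ik}|b_{kj}|}{2}$ equals $0$ or $\pm b_{ik}b_{kj}$, hence is a multiple of $b_{kj}$, so $d\mid b_{ij}'$ for every row, and the involution $\mu_k\circ\mu_k=\mathrm{id}$ gives the reverse divisibility. The paper itself offers no proof (it cites Lampe's notes and remarks the claim is straightforward from the definition of mutation), and your divisibility-plus-involution argument is exactly that routine verification, so there is nothing to add.
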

%\begin{proof}
%Let $j \in [n]$ and define $g_j = \gcd(b_{ij}: i \in [n])$ and  $g_j' = \gcd(b_{ij}: i \in [n])$ the greatest common divisors of the $j$-th column of $B$ and $B'$, respectively. The definition readily implies that $g_j|b_{ij}$ and $g_j|b_{ik}b_{kj}$ for every $i \in [n]$. Therefore it follows that $g_j | b_{ij}'$ for every $i \in [n]$ so that $g_j$ divides $g_j'$. Applying the same argument to $B = \mu_k(B')$ we see that $g_j'|g_j$ and we must have that $g_j = g_j'.$
%\end{proof}

We introduce the following definitions in order to prove that $Q_{ce}$ cannot be mutated to an acyclic quiver. 

\begin{definition}
A \hdef{diagram} for a quiver $Q$, denoted $\Gamma(Q)$ is the directed graph whose vertex set is identical to that of $Q$ and whose set of arrows is the set underlying the multiset of arrows of $Q$. 

Let $Q$ be a quiver and let $A$ be the set of arrows of $\Gamma(Q)$. We say that $Q$ \hdef{admits an admissible coloring} if there exists a function $\alpha: \Gamma(Q)_1 \rightarrow \mathbb{Z}/2\mathbb{Z}$ such that 

\begin{enumerate}
\item For each oriented cycle, $C,$ of $\Gamma(Q)$ we have $\displaystyle\sum_{a \in C} \alpha(a) = 1 $. 
\item For each non-oriented cycle, $NC,$ of $\Gamma(Q)$ we have $\displaystyle\sum_{a \in NC} \alpha(a) = 0$. 
\end{enumerate}  
\end{definition}
\begin{theorem}\label{thm:admissible}
Let $Q$ be a quiver. If $\mut(Q)$ contains an acyclic quiver then $Q$ admits an admissible coloring. 
\end{theorem}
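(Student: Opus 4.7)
The plan is to show that admitting an admissible coloring is a mutation-invariant property. Combined with the observation that every acyclic quiver admits an admissible coloring, this yields the theorem by inducting along any mutation sequence from $Q$ to an acyclic quiver (noting that mutation is an involution, so invariance in one direction suffices). The base case is immediate: if $Q'$ is acyclic then the constant function $\alpha \equiv 0$ on $\Gamma(Q')_1$ satisfies both conditions, since there are no oriented cycles (condition (1) is vacuous) and every cycle sum is trivially $0$ (condition (2)).

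For the inductive step I would, given an admissible coloring $\alpha$ on $Q$, construct a candidate admissible coloring $\alpha'$ on $\mu_k(Q)$ by specifying its value on each arrow of $\Gamma(\mu_k(Q))$ according to the three kinds of changes in Definition~\ref{def:mutation}. Explicitly: an arrow of $\Gamma(\mu_k(Q))$ not incident to $k$ that is unaffected by any $2$-path through $k$ receives $\alpha'(a) = \alpha(a)$; an arrow incident to $k$, which is the reversal of some arrow $a$ of $Q$, receives $\alpha'(a') = \alpha(a) + 1$; and an arrow $i \to j$ arising from the $2$-path $i \to k \to j$ receives $\alpha'(i \to j) = \alpha(i \to k) + \alpha(k \to j) + 1$, with an additional $+1$ correction in the case where $i \to j$ was already present in $\Gamma(Q)$ and its multiplicity was altered by the mutation. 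The intuition behind the $+1$ shifts is that a new arrow $i \to j$ ``replaces'' a $2$-path whose two arrows meet at the reversed vertex $k$, and each such reversal contributes a sign flip that must be tracked.

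The main obstacle will be checking that $\alpha'$ actually satisfies the two cycle conditions. For each cycle $C'$ of $\Gamma(\mu_k(Q))$, the natural strategy is to lift $C'$ to a closed walk $W$ in $\Gamma(Q)$ by replacing each new arrow $i \to j$ in $C'$ with the $2$-path $i \to k \to j$ and reinterpreting each reversed arrow at $k$ as its pre-image in $Q$. Since $W$ may revisit $k$, one then decomposes $W$ as a $\mathbb{Z}/2$-sum of simple cycles of $\Gamma(Q)$ and adds the admissibility contributions guaranteed by $\alpha$. The $+1$ shifts in the definition of $\alpha'$ are calibrated so that $\sum_{a \in C'} \alpha'(a)$ agrees with the orientation type of $C'$ modulo $2$ after this identification; a short case-check (oriented vs.\ non-oriented, with or without passage through $k$) verifies this. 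The most delicate point is the $2$-cycle cancellation in step (3) of the mutation, where a pre-existing arrow $j \to i$ may be annihilated with a newly created arrow $i \to j$, altering the underlying diagram; one must confirm that any cycle of $\Gamma(Q)$ meeting the cancelled pair contributes trivially to both the lifted walk and to the admissibility sum, so that the disappearance of these arrows from $\Gamma(\mu_k(Q))$ does not spoil the coloring.
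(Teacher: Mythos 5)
Your base case is fine, but the whole theorem lives in the inductive step, and the local recolouring rule you propose is not just unverified --- as stated it already fails one mutation away from an acyclic quiver. Take the acyclic quiver $R$ on vertices $i,j,k,m$ with arrows $k\to i$, $j\to k$, $j\to m$, $m\to i$, carrying the zero coloring. Then $\mu_k(R)$ has arrows $i\to k$, $k\to j$ (reversed), a new arrow $j\to i$, and the untouched $j\to m$, $m\to i$. Your rule gives $\alpha'(i\to k)=\alpha'(k\to j)=1$, $\alpha'(j\to i)=0+0+1=1$, and $0$ on the untouched arrows. But in $\Gamma(\mu_k(R))$ the triangle on $\{i,j,m\}$ is non-oriented and its sum is $1$, and the $4$-cycle $i\to k\to j\to m\to i$ is oriented and its sum is $0$; both conditions are violated, even though $\mu_k(R)$ does admit admissible colorings (give $i\to k$ the value $1$ and every other arrow $0$). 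The point is that the correct coloring of the mutated quiver cannot in general be produced by a recipe local to $k$ (here one must flip only one of the two arrows at $k$, and which choices work is dictated by cycles that do not pass through $k$); your ``short case-check'' would also have to handle merged and cancelled parallel arrows, lifted walks that revisit $k$, and the fact that the target value ($1$ for oriented, $0$ for non-oriented) is not transparently additive under $\Z/2$ decompositions of a closed walk. None of this is addressed, and it is exactly where the difficulty of the statement sits.

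This is not a calibration issue that a cleverer local rule obviously repairs: in matrix language your coloring is the sign pattern of a quasi-Cartan companion, and it is known that mutating an admissible quasi-Cartan companion need not yield an admissible one, so there is no unconditional ``admissible colorings are preserved by a single mutation'' lemma to induct with. The paper sidesteps the induction entirely: it invokes Seven's Theorem~1.2 (the reference \cite{Seven2014b} in the proof), which guarantees an admissible quasi-Cartan companion for every exchange matrix mutation-equivalent to an acyclic one --- a genuinely global result, proved via $c$-vectors and semipositive symmetric matrices rather than step-by-step transport --- and then simply reads off the coloring from the signs $c_{ij}>0$. If you want a self-contained argument along your lines, you would in effect be re-proving Seven's theorem, and the inductive step would have to be replaced by a construction that uses information about the whole mutation class, not just the arrows at the mutated vertex.
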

\begin{proof}
Suppose $\mut(Q)$ contains an acyclic quiver and let $B = (b_{ij})$ be the exchange matrix for $Q$. Theorem 1.2 of \cite{Seven2014b} provides the existence of an admissible quasi-Cartan matrix for $B$. This is a symmetric matrix of the form $C = (c_{ij})$ with the properties that \begin{enumerate}
\item $|c_{ij}| = |b_{ij}|$ for $i\neq j$
\item $c_{ii} = 2$ for $i = 1, \dots, n.$
\item Every oriented cycle $Z$ in $\Gamma(Q)$ has the property that the cardnality of the set\\ $\{c_{ij} > 0 | i \rightarrow j \in Z\}$ is odd.
\item Every non-oriented cycle $Z$ in $\Gamma(Q)$ has the property that the cardnality of the set\\ $\{c_{ij} > 0 | i \rightarrow j \in Z\}$ is even.
\end{enumerate} 
Therefore the function $\alpha: \Gamma(Q)_1 \rightarrow \Z/2\Z$ defined by 
$$ \alpha( i \rightarrow j ) = \left\{ \begin{array}{cc}
1 &\text{if } c_{ij}>0 \\ 0 &\text{otherwise} 
\end{array} \right. $$ shows that $Q$ admits an admissible coloring by properties $(3)$ and $(4)$ of admissible quasi-Cartan matrices. 
\end{proof}

\begin{corollary}\label{cor:no_acyclic}
There is no acyclic quiver in $\mut(Q_{ce})$.
\end{corollary}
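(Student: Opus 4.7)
The plan is to invoke Theorem~\ref{thm:admissible} contrapositively: it suffices to show that $Q_{ce}$ admits no admissible coloring of its diagram $\Gamma(Q_{ce})$. Since admissibility is purely a combinatorial condition on the cycles of $\Gamma(Q_{ce})$, I expect the argument to reduce to a small system of linear equations over $\mathbb{Z}/2\mathbb{Z}$ that turns out to be inconsistent.

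First I would identify the arrows of $\Gamma(Q_{ce})$, which (ignoring multiplicities) are $1\to 2$, $1\to 3$, $1\to 4$, $2\to 4$, $4\to 3$, and $3\to 2$. The underlying undirected graph is $K_4$, and its four triangles are exactly $\{2,3,4\}$, $\{1,2,3\}$, $\{1,2,4\}$, and $\{1,3,4\}$. Vertex $1$ is a source, so each of the three triangles through $1$ is non-oriented in $\Gamma(Q_{ce})$, while the triangle on $\{2,3,4\}$ carries the $3$-cycle $2\to 4\to 3\to 2$ and is therefore oriented. Any longer cycle in $K_4$ is a symmetric difference of these triangles, so the admissibility constraints coming from the four triangles already determine the whole system.

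The heart of the argument is the simple parity observation that each of the six arrows of $\Gamma(Q_{ce})$ lies in exactly two of these four triangles. Consequently, if $\alpha\colon \Gamma(Q_{ce})_1 \to \mathbb{Z}/2\mathbb{Z}$ were an admissible coloring, then summing the four triangle equations would yield
\[
0 \;\equiv\; \sum_{e \in \Gamma(Q_{ce})_1} 2\,\alpha(e) \;\equiv\; 1 + 0 + 0 + 0 \;\equiv\; 1 \pmod{2},
\]
since there is exactly one oriented triangle (contributing $1$) and three non-oriented triangles (each contributing $0$). This contradiction shows $Q_{ce}$ admits no admissible coloring, and Theorem~\ref{thm:admissible} then gives the corollary.

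There is no serious obstacle here beyond correctly enumerating the triangles and verifying the incidence count; both are immediate from the picture of $Q_{ce}$. The whole proof is effectively one parity identity.
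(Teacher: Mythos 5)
Your proof is correct and follows essentially the same route as the paper: reduce via Theorem~\ref{thm:admissible} to showing $Q_{ce}$ has no admissible coloring, then derive a parity contradiction over $\mathbb{Z}/2\mathbb{Z}$ from the four triangle constraints of $\Gamma(Q_{ce})$. The only (cosmetic) difference is that you sum all four triangle equations at once using the fact that each edge of $K_4$ lies in exactly two triangles, whereas the paper sums the three non-oriented triangles through the source vertex $1$ and contradicts the oriented triangle $2\to 4\to 3\to 2$; the computations are identical.
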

\begin{proof}
By Theorem~\ref{thm:admissible} It suffices to show that $Q_{ce}$ does not admit an admissible coloring. The diagram $\Gamma(Q_{ce})$ is provided in Figure~\ref{fig:diagram}. 

Let $\alpha$ be function from the set of arrows to $\mathbb{Z}/2\mathbb{Z}$. Denote the images of the arrows under $\alpha$ by the labels shown in Figure~\ref{fig:diagram}.

There are three non-oriented cycles $1 \stackrel a \rightarrow 2 \stackrel b \rightarrow 4 \stackrel c \leftarrow 1,$
 and $1 \stackrel a \rightarrow 2 \stackrel e \leftarrow 4\stackrel f \leftarrow 1$,
 and $1 \stackrel c \rightarrow 4 \stackrel d \rightarrow 3 \stackrel f \leftarrow 1$ in $\Gamma(Q_{ce})$. 
Assume that $f$ satisfies the second condition of an admissible coloring. That is, $$a + b + c  = a+e + f = c + d + f = 0.$$ Adding these three equations together implies that $b+d+e = 0$. However, $2 \stackrel b \rightarrow  4\stackrel d \rightarrow  3\stackrel e \rightarrow 2$ is an oriented cycle in $\Gamma(Q_{ce})$ so the first condition of an admissible coloring cannot be met. Therefore it is impossible for $Q_{ce}$ to admit an admissible coloring. 

\end{proof}
\begin{figure}
\begin{tikzpicture}[scale = 0.8]
\node[vertex] (A) at (0,0){2};
\node[vertex] (B) at (6,0){3};
\node[vertex] (C) at (3,5){1};
\node[vertex] (D) at (3,1.8){4};

\draw[qarrow] (C) to node [above left]{ a} (A); 
\draw[qarrow] (C) to node [above right]{ f} (B); 
\draw[qarrow] (A) to node [below right]{ b} (D); 
\draw[qarrow] (D) to node [below left]{ d} (B); 
\draw[qarrow] (B) to node [below]{ e} (A); 
\draw[qarrow] (C) to node [left]{ c} (D);

\end{tikzpicture}
\caption{The diagram for $Q_{ce}$ with labeled arrows.}\label{fig:diagram}
\end{figure}
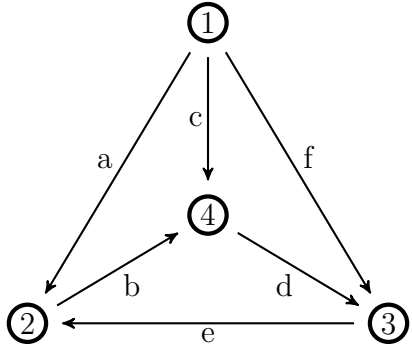

\begin{corollary}\label{cor:ce_1}
No quiver in $\mut(Q_{ce})$ has a maximal green sequence. 
\end{corollary}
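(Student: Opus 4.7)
The plan is to exploit an arithmetic invariant of mutation to show that multiple arrows persist throughout the mutation class, and then combine this with the absence of acyclic representatives (Corollary~\ref{cor:no_acyclic}) and the obstruction for multiple-arrow oriented cycles (Corollary~\ref{cor:bad_cycle}).

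First I would write down the exchange matrix $B$ of $Q_{ce}$ from Figure~\ref{fig:counterexample} explicitly. A quick inspection shows that the nonzero entries of each column lie in $\{\pm 2,\pm 4\}$ or $\{\pm 2,\pm 6\}$, so in every column the greatest common divisor of the entries equals $2$.

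Next I would invoke Lemma~\ref{lem:lampe}, which guarantees that the column-gcd is preserved under mutation. Hence for every $Q'\in\mut(Q_{ce})$, the exchange matrix $B'$ still has every column with gcd $2$, and in particular every entry of $B'$ is even. Translating back to quivers, this means that every arrow of every $Q'\in\mut(Q_{ce})$ is a multiple arrow of multiplicity at least $2$.

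Finally, Corollary~\ref{cor:no_acyclic} tells us that no $Q'\in\mut(Q_{ce})$ is acyclic, so every such $Q'$ contains an oriented cycle $i_1\to i_2\to\cdots\to i_\ell\to i_1$. By the previous paragraph, each arrow $i_j\to i_{j+1}$ along this cycle has multiplicity at least $2$, so the hypothesis of Corollary~\ref{cor:bad_cycle} is satisfied and $Q'$ admits no maximal green sequence. Since $Q'\in\mut(Q_{ce})$ was arbitrary, the corollary follows. The main (and only) substantive step is the column-gcd computation for $Q_{ce}$; everything else is a direct citation of earlier results.
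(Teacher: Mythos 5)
Your proposal is correct and follows essentially the same route as the paper: compute that every column of the exchange matrix of $Q_{ce}$ has gcd $2$, use Lemma~\ref{lem:lampe} to conclude no quiver in $\mut(Q_{ce})$ has a single arrow, then combine Corollary~\ref{cor:bad_cycle} (for the non-acyclic quivers) with Corollary~\ref{cor:no_acyclic} (to exclude acyclic ones). No gaps; your explicit remark that gcd $2$ forces every entry even, hence every arrow of multiplicity at least $2$, is exactly the point the paper's proof relies on.
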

\begin{proof}
Consider the exchange matrix $$B_{ce} = \begin{pmatrix}
0 & 2 & 2 & 4 \\
-2 & 0 & -6 & 2 \\
-2 & 6 & 0 & -2 \\
-4 & -2 & 2 & 0
\end{pmatrix}$$ corresponding to the quiver $Q_{ce}$.
It is clear that the $\gcd$ of every column of $B_{ce}$ is equal to 2. It follows from Lemma~\ref{lem:lampe} that no quiver in the mutation class of $Q_{ce}$ contains a single arrow since then the $\gcd$ of a column would be equal to 1. Therefore every non-acyclic quiver in $\mut(Q_{ce})$ has no maximal green sequence by Corollary~\ref{cor:bad_cycle}. The result now follows from Corollary~\ref{cor:no_acyclic}.
\end{proof}

\begin{lemma}{\cite[Corollary 19]{muller2}}\label{lem:g2r-inv}
If a quiver $Q$ admits a green-to-red sequence then any quiver $Q' \in \mut(Q)$ has a green-to-red sequence. 
\end{lemma}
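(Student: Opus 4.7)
The plan is to translate the statement into the language of $c$-vectors and then invoke the intrinsic nature of the relevant data. Recall that after any mutation sequence $\mathbf{i}$ applied to $\hat{Q}$, the pattern of arrows between mutable and frozen vertices in $\mu_{\mathbf{i}}(\hat{Q})$ is recorded by the associated $c$-matrix $C_{\mathbf{i}}(Q)$. A mutable vertex is red in the sense of Definition~\ref{def:green1} precisely when the corresponding column of $C_{\mathbf{i}}(Q)$ is non-positive. By sign-coherence (Theorem~\ref{thm:signcoh}), if $\mathbf{i}$ is a green-to-red sequence for $Q$, then $C_{\mathbf{i}}(Q) = -P_\sigma$ for some permutation $\sigma$ of $\{1,\dots,n\}$, since each column must be $-e_j$ for a distinct $j$.

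First I would fix a green-to-red sequence $\mathbf{i}$ for $Q$ with induced permutation $\sigma$, and let $Q' = \mu_{\mathbf{j}}(Q)$ be any other quiver in $\mut(Q)$. The natural candidate for a green-to-red sequence for $Q'$ is
\[
\mathbf{i}' \;:=\; \bigl(\mathbf{j}^{\mathrm{op}},\; \mathbf{i},\; \sigma(\mathbf{j})\bigr),
\]
where $\mathbf{j}^{\mathrm{op}}$ denotes $\mathbf{j}$ read in reverse and $\sigma(\mathbf{j})$ applies $\sigma$ entry-wise to the labels of $\mathbf{j}$. The heuristic is that $\mathbf{j}^{\mathrm{op}}$ undoes the mutations of the mutable part so as to align it with $\hat{Q}$, the middle segment $\mathbf{i}$ performs the known reddening, and the final segment $\sigma(\mathbf{j})$ rebuilds the seed of $Q'$ in the now-reversed frozen regime, accounting for the permutation $\sigma$ induced by $\mathbf{i}$.

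The main obstacle is rigorously verifying that every column of $C_{\mathbf{i}'}(Q')$ is non-positive. A direct check requires the transformation rules relating $c$-matrices under change of initial seed, essentially the formulas of Nakanishi--Zelevinsky on tropical duality, together with careful bookkeeping of how the permutation $\sigma$ propagates through the three segments. I expect this to be the bulk of the work, and that one has to invoke sign-coherence at every intermediate stage to rule out cancellations that would leave a column with mixed signs.

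A cleaner route, which I would ultimately prefer, is to appeal to the scattering-diagram framework of Gross--Hacking--Keel--Kontsevich. In that formalism, the existence of a green-to-red sequence for $Q$ is equivalent to the consistent scattering diagram associated with $\mut(Q)$ admitting a finite piecewise-linear path from the positive cluster chamber to its antipode that crosses only incoming walls. Since the scattering diagram, together with its positive and negative distinguished chambers, is an invariant of the mutation class and does not depend on the choice of initial seed, the existence of such a path is manifestly invariant under mutation of the initial quiver. This immediately yields that $Q'$ admits a green-to-red sequence and converts the bookkeeping obstacle into a structural statement.
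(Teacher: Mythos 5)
The paper does not prove this lemma at all: it is imported verbatim from Muller \cite[Corollary 19]{muller2}, so your write-up would need to actually supply the argument, and neither of your two routes does. In your first route the decisive step is, by your own admission, left as an expectation ("I expect this to be the bulk of the work"), and the guiding heuristic is flawed: $\mu_{\mathbf{j}^{\mathrm{op}}}(\hat{Q}')$ is \emph{not} $\hat{Q}$. Its mutable part is indeed $Q$, but its arrows to the frozen vertices record the $c$-vectors of $\mathbf{j}^{\mathrm{op}}$ applied to $Q'$, which in general do not form the standard framing. Knowing that $\mathbf{i}$ reddens the framed quiver $\hat{Q}$ therefore says nothing directly about what $\mathbf{i}$ does to this differently-framed ice quiver; bridging exactly that discrepancy (via the Nakanishi--Zelevinsky change-of-initial-seed formulas for $C$-matrices, or Keller/Nagao-type results on DT transformations, which is how Muller's argument proceeds) is the entire content of the lemma, not a bookkeeping afterthought. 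A smaller but real inaccuracy: column sign-coherence (Theorem~\ref{thm:signcoh}) plus "all columns non-positive" does not by itself force $C_{\mathbf{i}}(Q)=-P_\sigma$; a non-positive matrix in $GL_n(\Z)$ need not be a negated permutation matrix, so this step needs the standard extra input (invertibility together with tropical duality, or the Br\"ustle--Dupont--P\'erotin style argument identifying the final ice quiver with the coframed quiver).

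Your "cleaner route" does not repair this. The scattering-diagram characterization is misstated: a green-to-red sequence places no condition on which walls are crossed along the way ("incoming walls" is a different notion entirely); the correct statement is that the antipodal chamber $-\mathcal{C}^+$ is reachable from $\mathcal{C}^+$ through the cluster complex. More importantly, the claim that the scattering diagram "together with its positive and negative distinguished chambers is an invariant of the mutation class and does not depend on the choice of initial seed" is precisely the nontrivial mutation-invariance theorem of Gross--Hacking--Keel--Kontsevich: the diagrams for $B$ and $\mu_k(B)$ are related by a nontrivial piecewise-linear transformation, and one must still verify that this identification matches cluster chambers with cluster chambers and the negative chamber with the negative chamber. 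Calling the invariance "manifest" assumes essentially what is to be proven. So the proposal is a reasonable plan---your first route is in the same spirit as the argument behind Muller's Corollary 19---but as written it contains a genuine gap at the one step that matters.
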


\begin{corollary}\label{cor:ce_2}
Every quiver in $\mut(Q_{ce})$ has a green-to-red sequence.
\end{corollary}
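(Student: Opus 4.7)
The plan is to reduce the statement to a single explicit construction, then verify it directly. By Lemma~\ref{lem:g2r-inv}, the existence of a green-to-red sequence is invariant across the mutation class, so it suffices to exhibit one green-to-red sequence for $Q_{ce}$ itself.

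I would therefore work on the framed quiver $\hat{Q}_{ce}$ and search for a mutation sequence $\mu = \mu_{i_\ell}\circ\cdots\circ\mu_{i_1}$ for which every mutable vertex of $\mu(\hat{Q}_{ce})$ is red. A natural opening move is $\mu_1$, since vertex $1$ is a source of $Q_{ce}$ and hence initially green; afterwards I would proceed by trial, exploiting the approximate three-fold symmetry of the triangle on $\{2,3,4\}$ (with its arrow multiplicities $2,2,6$) to guide the choice of subsequent mutation vertices. For a rank-$4$ framed quiver this is a finite search that is tractable by hand and certainly feasible with cluster-algebra software.

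The principal obstacle is that there is no immediate structural shortcut available here: Corollary~\ref{cor:no_acyclic} rules out reduction to an acyclic quiver, so the standard reddening arguments that proceed via an acyclic orientation do not apply, and one genuinely needs to produce an explicit $\mu$. Once a candidate is found, however, verification is mechanical. By Theorem~\ref{thm:signcoh} every mutable vertex of $\mu(\hat{Q}_{ce})$ is either red or green, so one need only compute $\mu(\hat{Q}_{ce})$ and confirm that no frozen vertex $j^*$ has an arrow $j^*\to i$ to any mutable $i$. Once $Q_{ce}$ is shown to admit such a $\mu$, Lemma~\ref{lem:g2r-inv} immediately propagates this across $\mut(Q_{ce})$, yielding the corollary.
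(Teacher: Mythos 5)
Your reduction is exactly the one the paper uses: by Lemma~\ref{lem:g2r-inv} it suffices to produce a single green-to-red sequence for $Q_{ce}$, and verification of any candidate sequence is indeed mechanical thanks to Theorem~\ref{thm:signcoh} (check that no frozen vertex has an arrow into a mutable vertex after applying the sequence to $\hat{Q}_{ce}$). The problem is that you never actually produce the witness. The entire content of the corollary, beyond the invariance lemma, is the existence of such a sequence, and your argument replaces it with a description of how you would look for one (``proceed by trial''). As written this is a plan, not a proof: if you had carried out the search you would state the sequence and its verification, and the paper does precisely that, exhibiting $\mgsi=(1,4,3,4,2,4)$ as a green-to-red sequence for $Q_{ce}$ (your instinct to start with $\mu_1$ at the source vertex matches its first step).

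A secondary inaccuracy: the search you describe is not ``finite.'' Checking a given candidate sequence is a finite computation, but the space of mutation sequences is infinite, and $Q_{ce}$ is mutation-infinite (it has a $6$-fold arrow, so it is not among the mutation-finite classes of Theorem~\ref{thm:mufinte}), so the framed quiver does not have a finite mutation class either. Without a termination guarantee, an unbounded trial-and-error search cannot by itself establish existence; only an explicitly exhibited and verified sequence (or some structural existence argument, which, as you correctly note, is unavailable here since $\mut(Q_{ce})$ contains no acyclic quiver by Corollary~\ref{cor:no_acyclic}) closes the argument. Supply the explicit sequence and its sign check, and your proof coincides with the paper's.
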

\begin{proof}
It is straightforward to check that the mutation sequence $\mgsi=(1, 4, 3, 4, 2, 4)$ is a green-to-red sequence for $Q_{ce}$ so the claim follows from Lemma~\ref{lem:g2r-inv}. 
\end{proof}

\bibliographystyle{abbvmend}
\bibliography{library}

%% Index go here (if you have one)
\end{document}